
\documentclass[11pt,reqno]{amsart} 
\usepackage[margin=1in]{geometry} 
\geometry{letterpaper}

\setlength{\headheight}{12pt}

\usepackage{amsmath,amsthm,amsfonts,amssymb,amscd}
\usepackage[nobysame]{amsrefs}
\usepackage{enumitem}

\usepackage{times}
\usepackage{esint,stackrel}
\usepackage{enumitem}

\usepackage{xfrac}

\usepackage{cases}

\newcommand{\bound}[2]{{\mathcal B}(#1;#2)}
 
\usepackage{fancyhdr} 
\pagestyle{fancy} 
\lhead{}
\chead{}
\rhead{}
\lfoot{}\cfoot{\thepage}\rfoot{}
\setlength{\footskip}{13.0pt}
\setlength{\headheight}{13.0pt}

\def\p{\partial} 
\def\eps{\varepsilon}

\newcommand{\abs}[1]{\left|#1\right|}

\newcommand*{\sgn}{\ensuremath{\mathrm{sgn\,}}}

\newcommand{\TT}{\mathbb T}
\newcommand{\OO}{\mathcal O}

\renewcommand*{\tilde}{\widetilde}
\renewcommand*{\hat}{\widehat}
\renewcommand*{\bar}{\overline}

\renewcommand{\epsilon}{\varepsilon}

\newcommand{\kcal}{S}



\newtheorem{theorem}{Theorem}[section]
\newtheorem{lemma}[theorem]{Lemma}
\newtheorem{proposition}[theorem]{Proposition}

\newtheorem{definition}[theorem]{Definition}


\numberwithin{equation}{section}


\newcommand{\be}{\begin{equation}}
\newcommand{\ee}{\end{equation}}

\newcommand{\R}{\mathbb{R}}
\newcommand{\I}{\mathcal I}

\def\aa{ {\mathsf a} }

\def\cir{ \! \circ \! }

\newcommand{\C}{\mathbb{C}}
\newcommand{\Z}{\mathbb{Z}}

\usepackage[colorlinks=true, pdfstartview=FitV, linkcolor=blue,citecolor=blue, urlcolor=blue]{hyperref}


 
\title{A characteristics approach to shock formation in 2D Euler with azimuthal symmetry and entropy}

\author{Isaac Neal}
\address{Courant Institute of Mathematical Sciences, New York University, New York, NY 10012.}
\email{\href{in577@cims.nyu.edu}{in577@cims.nyu.edu}}

\author{Steve  Shkoller}
\address{Department of Mathematics, University of California Davis, Davis, CA 95616.}
\email{\href{shkoller@math.ucdavis.edu}{shkoller@math.ucdavis.edu}}

\author{Vlad Vicol}
\address{Courant Institute of Mathematical Sciences, New York University, New York, NY 10012.}
\email{\href{vicol@cims.nyu.edu}{vicol@cims.nyu.edu}}

\begin{document}

\begin{abstract}
We provide a detailed analysis of the shock formation process for the  non-isentropic 2d Euler equations in azimuthal symmetry.
We prove that from an open set of smooth and generic initial data, solutions  of Euler form a first singularity or gradient blow-up or shock.  This
first singularity is termed a  H\"{o}lder  $C^{\frac{1}{3}} $ {\it pre-shock}, and our analysis provides the first detailed description of this cusp solution.
The novelty of this work relative to~\cite{BuDrShVi2022} is that we herein consider a much larger class of initial data,  allow for a non-constant initial entropy, allow for a non-trivial sub-dominant Riemann variable, and introduce a host of new identities to avoid apparent derivative loss due to
entropy gradients.  The method of proof is also new and robust, exploring the transversality of the
three different characteristic families
 to transform space derivatives into time derivatives. Our main result provides a  fractional series expansion of the Euler solution about the
 pre-shock, whose coefficients  are computed from the  initial data. 
\end{abstract}

\maketitle


\allowdisplaybreaks


\section{Introduction}

Investigating shock formation and development is one of the central problems of hyperbolic PDE. Establishing {\em shock formation} (gradient blowup) from smooth initial data, in a {\em constructive} manner, is crucial for analyzing the dynamics of the resulting discontinuous shock waves. A precise description of the solution at the {\em pre-shock} (the spacetime set where smooth solutions first form cusps) is what allows for a full characterization of singularity propagation, especially in multiple space dimensions  (see~\S~\ref{sec:motivation} for details).

This paper establishes shock formation for smooth solutions of the non-isentropic two-dimensional compressible Euler equations in azimuthal symmetry. When compared to~\cite{BuShVi2019a} this work gives a detailed description of the solution near the pre-shock as a fractional power series. This paper also goes beyond~\cite{BuDrShVi2022} by establishing shock formation in the {\em non-isentropic} setting, and  with {\em minimal constraints} imposed on the initial data  (see~\S~\ref{sec:motivation} for details).

Beyond the result itself, we develop a new robust proof strategy for establishing shock formation for a complex system of hyperbolic PDEs with multiple wave speeds. Instead of appealing to modulated self-similar analysis (cf.~\cite{BuShVi2019a,BuDrShVi2022}), we use new variables which satisfy  pointwise and integral identities which accurately capture the compressible Euler dynamics (see~\S~\ref{sec:newideas} for details).

\subsection{The compressible Euler equations}

The Euler equations of gas dynamics consist of the three conservation laws for momentum, mass, and energy, given respectively by
\begin{subequations} 
\label{euler-weak}
\begin{align}
\partial_t (\rho u)  + \operatorname{div} (  \rho u\otimes u + p I ) &=0\,,  \label{ee2}\\
\partial_t \rho + \operatorname{div}  (  \rho u ) &=0\,, \label{ee1}\\
\partial_t E + \operatorname{div}  (  (p+ E) u ) &=0 \label{ee3}\,.
\end{align}
In two space dimensions, the focus of this paper, 
 $u :\mathbb{R}^2  \times \mathbb{R}  \to \mathbb{R}^2  $ denotes the velocity vector field, $\rho: \mathbb{R}^2  \times \mathbb{R}  \to \mathbb{R}  _+$ denotes the
strictly positive density function,    $E: \mathbb{R}^2  \times \mathbb{R}  \to \mathbb{R}$ denotes the total energy function, and 
$p: \mathbb{R}^2  \times \mathbb{R}  \to \mathbb{R}$ denotes the pressure function which is related to $(u,\rho, E)$ by the identity
$p = (\gamma-1)  ( E- \tfrac{1}{2} \rho \abs{u}^2 ) $,
where $\gamma>1$ denotes the adiabatic exponent.
For the analysis of the shock formation process, it is convenient to replaced conservation of energy \eqref{ee3} with transport  of entropy
\begin{align}
\p_t S + u \cdot \nabla S =0 \,.
\label{eq:entropy2}
\end{align}
\end{subequations} 
Here,  $S: \mathbb{R}^2  \times \mathbb{R}  \to \mathbb{R}$ denotes the specific entropy, and the  equation-of-state for pressure is written as
\begin{align}
 p(\rho,\kcal) = \tfrac{1}{\gamma} \rho^\gamma e^{\kcal}\,.
 \label{peos}
\end{align}


In preparation for reducing the equations to a more symmetric form, using Riemnann-type variables, we introduce the adiabatic exponent
$$
\alpha = \tfrac{ \gamma -1}{2}  \,
$$
so that
the (rescaled) sound speed reads
\begin{align} 
\sigma = \tfrac{1}{\alpha }  \sqrt{ \sfrac{\p p}{\p \rho} } =  \tfrac{1}{\alpha }  e^{{\frac{\kcal}{2}} } \rho^ \alpha \,.
\label{sigma1}
\end{align} 
With this notation, the ideal gas equation of state \eqref{peos} becomes 
\begin{align} 
p= \tfrac{\alpha ^2}{\gamma} \rho \sigma^2 \,. \label{p1}
\end{align} 
The Euler equations \eqref{ee2}, \eqref{ee1}, \eqref{eq:entropy2}, as a system for $(u, \sigma , \kcal)$ are then given by
\begin{subequations}
\label{eq:Euler2}
\begin{align}
\p_t u + (u \cdot \nabla) u + \alpha \sigma  \nabla \sigma  &=  \tfrac{\alpha }{2\gamma} \sigma^2  \nabla \kcal \,,  \label{eq:momentum2} \\
\partial_t \sigma + (u \cdot \nabla) \sigma  + \alpha \sigma \operatorname{div}u&=0 \,,  \label{eq:mass2} \\
\partial_t \kcal  +  (u \cdot\nabla) \kcal &=0 \,.  \label{eq:entropy22}
\end{align}
\end{subequations}
We let $\omega=\nabla^\perp\cdot  u$ denote the scalar vorticity, and 
define the {\it specific vorticity}  by $ \zeta = \tfrac{\omega}{\rho} $.  A straightforward computation shows that $\zeta$ is a solution to
\begin{align} 
\p_t \zeta  + (u \cdot \nabla) \zeta
=  \tfrac{\alpha }{\gamma} \tfrac{\sigma}{\rho} \nabla^\perp \sigma \cdot   \nabla \kcal   \,.   \label{specific-vorticity}
\end{align} 
The term $\tfrac{\alpha }{\gamma} \tfrac{\sigma}{\rho} \nabla^\perp \sigma \cdot   \nabla \kcal  $ on the right side of \eqref{specific-vorticity}
can also be written as $\rho^{-3}  \nabla^\perp \rho \cdot  \nabla p$ and is referred to as {\it baroclinic torque}. 

The  goal of this paper is to give a constructive proof of shock formation for \eqref{eq:Euler2}, from smooth initial data, via a method  powerful enough to capture a high-order series expansion of all fields at the preshock, information which is in turn necessary to study the shock development problem.  More precisely, we prove: 

\begin{theorem}[\bf Main result, abbreviated]
\label{thm:main:soft}
From smooth, non-isentropic initial data with azimuthal symmetry lying in an open set\footnote{ See \S~ \ref{sec:assumptions}-\ref{sec:thm} for the details of the pertinent set of initial data.}, there exist smooth solutions to the
2d Euler equations~\eqref{euler-weak} that form a gradient blowup singularity at a computable time $T_*$\footnote{ We abuse notation here, because the time $T_*$ used here differs from the time $T_*$ referenced in the rest of the paper by a constant dependent on $\gamma>1$. See \S~\ref{sec:changevar}.} and spatial location. More specifically, there exists $\xi_* \in \TT$ such that when the 2d Euler equations are expressed in polar coordinates as in \eqref{eq:Euler:polar}, the azimuthal component of the flow $u_\theta$ and the sound speed $\sigma$ form  $C^{0,\frac{1}{3}}$ cusps along the ray $\theta=\xi_*$ at the time of the blowup, and are given by the fractional series expansions  
\begin{align*}
u_\theta(r,\theta,T_*) & = r\big( \mathsf {b}_0 + \mathsf{b}_1 (\theta-\xi_*)^{1/3} + \mathsf{b}_2(\theta-\xi_*)^{2/3} + \OO(\eps^{-1}|\theta-\xi_*|) \hspace{1mm} \big), \\
\sigma(r,\theta, T_*) & = r\big( \mathsf{c}_0 + \mathsf{b}_1 (\theta-\xi_*)^{1/3} + \mathsf{b}_2(\theta-\xi_*)^{2/3} + \OO(\eps^{-1}|\theta-\xi_*|) \hspace{1mm} \big),
\end{align*}
for $\theta$ in a neighborhood of radius $\sim \eps^3$,\footnote{Here $\eps^{-1}$ is a large parameter quantifying the absolute size of  slope of the initial data. See~\S~\ref{sec:assumptions} for details.} while the radial component $u_r$ of the flow, the specific entropy $S$, and the specific vorticity $\zeta$ remain $C^{1,\frac{1}{3}}$, with fractional series expansions
\begin{align*}
u_r(r,\theta,T_*) & = r\big( \aa_0 + \aa_3(\theta-\xi_*) + \aa_4(\theta-\xi_*)^{4/3} + \mathcal O( \eps^{-1/2}|\theta-\xi_*|^{5/3}) \: \big), \\
S(r,\theta, T_*) & = \mathsf{k}_0 + \mathsf{k}_3(\theta-\xi_*) + \mathsf{k}_4 (\theta-\xi_*)^{4/3} + \mathcal O(\varepsilon^{-1} |\theta-\xi_*|^{5/3}), \\
\zeta(r,\theta, T_*) & = \mathsf{v}_0 + \mathsf{v}_3(\theta-\xi_*) + \OO(\eps^{-1}|\theta-\xi_*|^{4/3}).
\end{align*}
Here, the constants $\aa_0, \aa_3, \aa_4, \mathsf{b}_0, \mathsf{b}_1, \mathsf{b}_2, \mathsf{c}_0, \mathsf{k}_0, \mathsf{k}_3,\mathsf{k}_4,$ and $\mathsf{v}_0$ are   $\OO(1)$ while $\mathsf{v}_3$ is $\OO(\eps^{-1})$.\footnote{ See \S~\ref{sec:notation} for the details of our use of $\OO(\cdot)$ and $\sim$.}
\end{theorem}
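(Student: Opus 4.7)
The plan is to reduce the azimuthally symmetric 2D Euler system~\eqref{eq:Euler2} to a one-dimensional quasilinear system in an angular variable $\xi$ for the radial velocity $u_r$, azimuthal velocity $u_\theta$, sound speed $\sigma$, specific entropy $\kcal$, and specific vorticity $\zeta$. First I would diagonalize the fast acoustic subsystem using Riemann-type variables $w := u_\theta + \sigma$ and $z := u_\theta - \sigma$, producing three characteristic families with speeds $\lambda_1 = u_\theta - \sigma$ (carrying $z$), $\lambda_2 = u_\theta$ (carrying $\kcal$, $u_r$, $\zeta$ modulo sources), and $\lambda_3 = u_\theta + \sigma$ (carrying $w$). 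The initial data are taken from the open set in which $w$ is the unique variable with large negative $\xi$-slope of size $\sim -\eps^{-1}$, while $z$, $\kcal$, $u_r$, $\zeta$ have slopes of order $1$.

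Next I would run a bootstrap argument on the smooth solution up to the pre-shock time $T_*$. Differentiating the $w$-equation in $\xi$ and setting $D_3 := \p_t + \lambda_3 \p_\xi$ gives a Riccati-type ODE of schematic form
\[
D_3(\p_\xi w) = -\tfrac{\gamma+1}{4}(\p_\xi w)^2 + \mathcal{F}\bigl[\p_\xi z, \p_\xi \kcal, \p_\xi u_r, \p_\xi \zeta, w, z, \sigma, \ldots\bigr],
\]
so that the expected blowup mechanism is Riccati self-interaction, driving $\p_\xi w \to -\infty$ at some time $T_*$ and some Lagrangian label $\xi_{0,*}$, whose image along the third characteristic we denote $\eta_3(\xi_{0,*}, T_*) =: \xi_*$. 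The hard part, and the step where I expect the main obstacle, is that the forcing $\mathcal{F}$ contains gradients of the entropy (from the pressure equation of state in~\eqref{eq:momentum2} and the baroclinic source in~\eqref{specific-vorticity}), so a naive closure of the bootstrap would require bounding $\p_\xi^2 \kcal$, which is not preserved by the transport equation~\eqref{eq:entropy22}. To overcome this I would exploit the transversality of $\eta_1, \eta_2, \eta_3$ by rewriting the space derivative $\p_\xi$ as a linear combination of the three characteristic derivatives $D_1, D_2, D_3$; since $\kcal$ is transported by $D_2$ (so $D_2\kcal = 0$) and $z$ by $D_1$, each apparently bad $\p_\xi$-derivative of a sub-dominant quantity becomes either a time-like derivative along its own characteristic (which the PDE annihilates) or a time-like derivative along another characteristic (whose evolution is already controlled). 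These pointwise and integral identities close the bootstrap without derivative loss and yield uniform bounds on all relevant Lagrangian derivatives up to $T_*$.

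Finally I would read off the fractional series expansion at the pre-shock. I would track the fold $\xi_0 \mapsto \eta_3(\xi_0, T_*)$: the genericity conditions $\p_{\xi_0}\eta_3(\xi_{0,*}, T_*) = 0$, $\p_{\xi_0}^2 \eta_3(\xi_{0,*}, T_*) = 0$, and $\p_{\xi_0}^3 \eta_3(\xi_{0,*}, T_*) \neq 0$ (all built into the open set of initial data) produce a cubic fold $\eta_3(\xi_0, T_*) - \xi_* \sim C(\xi_0 - \xi_{0,*})^3$, whose functional inverse has the cube-root shape $(\theta - \xi_*)^{1/3}$. Composing this inverse with the Lagrangian Taylor expansion of $w$ at $(\xi_{0,*}, T_*)$ --- smooth in $\xi_0$ by the bootstrap of step two --- produces the $\mathsf{b}_0 + \mathsf{b}_1(\theta-\xi_*)^{1/3} + \mathsf{b}_2(\theta-\xi_*)^{2/3}$ expansion, with $\mathsf{b}_j$ explicit polynomial combinations of Lagrangian Taylor coefficients evaluated from the initial data. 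Because $z$ is transported along the non-focusing 1-characteristic, it remains smooth in $\theta$ at $\xi_*$, so $\sigma = (w-z)/2$ inherits the same fractional terms with leading constant $\mathsf{c}_0 \neq \mathsf{b}_0$. For $u_r$, $\kcal$, $\zeta$ transported along $\eta_2$ with speed $u_\theta = (w+z)/2 \in C^{0,\frac{1}{3}}$, the map $\eta_2$ is Lipschitz but fails to be $C^1$ at $(\xi_{0,*}, T_*)$; integrating the respective transport equations along $\eta_2$ and composing with the cube-root inverse yields the $(\theta - \xi_*)^{4/3}$ correction, with coefficients $\aa_j, \mathsf{k}_j, \mathsf{v}_j$ determined by matching against the $\mathsf{b}_j$ and the baroclinic source in~\eqref{specific-vorticity}.
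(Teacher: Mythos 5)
Your overall strategy is the paper's: reduce via the azimuthal ansatz to a $1$D system, pass to Riemann variables with three uniformly transverse wave families, trade spatial derivatives for characteristic derivatives to avoid the entropy-gradient derivative loss, and read the $1/3$-power expansions off a cubic fold of the fast characteristic flow. However, two steps of your sketch would not go through as stated. First, you treat the fold degeneracy $\p_{x}\eta(x_*,T_*)=\p^2_{x}\eta(x_*,T_*)=0$, $\p^3_x\eta\neq 0$ as ``genericity conditions built into the open set of initial data.'' Only conditions on $w_0$ (unique minimum of $w_0'$, $w_0'''\sim\eps^{-4}$, etc.) are assumed; the simultaneous vanishing of $\eta_{xx}$ at a \emph{unique} blowup label is a conclusion that must be proved, and in the paper it requires the Duhamel-type representation $\eta_{xx}\approx w_0''\int I_\tau$, the improved $\eta_x,\eta_{xx}$ estimates, and the uniqueness-of-label argument of \S~\ref{sec:x_*}. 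Without that derivation the fold could be quadratic rather than cubic and the claimed $(\theta-\xi_*)^{1/3}$ structure would fail. Relatedly, your closure of the Riccati bootstrap is only gestured at: the bad forcing is $(w-z)^2\p^2_\theta k$, and $D_3(\p_\theta k)$ is not annihilated by any equation, so ``rewriting $\p_\xi$ as a combination of the $D_i$'' does not by itself remove the loss; one needs the specific good unknowns $q^w=\p_\theta w-\tfrac14 c\,\p_\theta k$, $q^z=\p_\theta z+\tfrac14 c\,\p_\theta k$ (whose evolution contains no second derivatives), or equivalently the identity $\p^2_\theta k=2c^{-1}\p_\theta c\,\p_\theta k+\mathcal E$ with $\mathcal E$ controlled along the $2$-flow, which is exactly what the paper supplies.

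Second, your mechanism for the $4/3$-corrections of $u_r$, $S$, $\zeta$ is wrong as stated: you claim the $2$-characteristic flow ``is Lipschitz but fails to be $C^1$'' at the pre-shock. Uniform transversality gives $\phi_x\sim 1$ up to $T_*$ (Proposition~\ref{prop:small:time}), so $\phi(\cdot,T_*)$ remains a bi-Lipschitz $C^1$ diffeomorphism; if it failed to be $C^1$, then $S=k_0\cir\phi^{-1}$ would generically be merely Lipschitz, contradicting the $C^{1,1/3}$ regularity you assert. What degenerates is the \emph{second} derivative of the $2$-flow, or equivalently, in the paper's route, one composes $z,k,a,\varpi$ with the $3$-flow $\eta$, proves they stay $C^4$/$C^5$ in the label uniformly up to $T_*$ (the content of \S\S~\ref{sec:2DE}--\ref{sec:3char}, which is the bulk of the work and is needed for the stated error exponents), and then uses $\eta_x(x_*,T_*)=\eta_{xx}(x_*,T_*)=0$ together with the boundedness of the first two $\theta$-derivatives of these fields to see that the $1/3$ and $2/3$ powers drop out, leaving the constant, linear, and $(\theta-\xi_*)^{4/3}$ terms. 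The same point corrects your claim that $z$ ``remains smooth'' at $\xi_*$: it is only $C^{1,1/3}$ there, which is still enough for $u_\theta$ and $\sigma$ to share the coefficients $\mathsf b_1,\mathsf b_2$.
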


\subsection{Motivation and prior results}
\label{sec:motivation}

We recall that the classical proofs of finite-time singularity formation for the compressible Euler equations and related hyperbolic systems are not constructive (see e.g.~\cite{lax1964development}, \cite{liu1979development}, \cite{sideris1985formation}). A constructive proof of blowup, and equally importantly, a detailed description of the  solution at the pre-shock, is necessary in order to establish shock development.  This is especially true in multiple space dimensions: while the theory of weak solutions for 1D hyperbolic 
systems is well-developed (see e.g.~\cite{dafermos2005hyperbolic}), many of the techniques used in the 1D theory either do not apply in multiple space dimensions\footnote{ For example, the BV estimates utilized in the classical theory of shocks for 1D hyperbolic systems fails for $d\geq 2$. See \cite{rauch1986bv}. }  or   are not precise enough to be useful for the  shock development problem, which requires bounds on derivatives of the solution.

Departing from the weak solutions perspective, Lebaud \cite{lebaud1994description} established shock formation and development for the one-dimensional $p$-system (a variant on 1D isentropic Euler). These results were expanded upon by Chen and Dong \cite{chen2001formation} and Kong \cite{Kong2002}. Studying shock development in the $p$-system does not per se prove anything about physical solutions of Euler, because physical solutions of Euler that have shocks cannot be isentropic (see \S~2.2 of \cite{BuDrShVi2022} or \S~3 of \cite{BDSV_EMS} for details). Moreover, non-isentropic solutions of Euler are generically not irrotational due to a misalignment of pressure and entropy gradients (see \eqref{specific-vorticity} above, and \S~4 of \cite{BDSV_EMS} for the 3D case), so physical solutions which have shocks are also generically not irrotational. Studying shock development for piecewise isentropic or even piecewise irrotational solutions of Euler is called the {\it{restricted shock development}} problem. For the restricted shock development problem, Christodoulou established shock formation and development for irrotational flows in his landmark books~\cite{christodoulou2007formation},~\cite{christodoulou2019shock}.  Yin~\cite{yin2004formation} wrote  the first  paper establishing shock formation and development for non-isentropic Euler, but confined to spherical symmetry (see also~\cite{christodoulou2016shock}). 
Luk and Speck~\cite{luk2018shock}  proved  shock formation for the 2D isentropic Euler equations in the presence of vorticity, by generalizing  Christodoulou's geometric framework.


A different perspective was taken by Buckmaster, Shkoller, and Vicol~\cite{BuShVi2019a,BuShVi2019b,BuShVi2020} who used modulated self-similar variables to construct the  first gradient singularity  (a {\em point shock}) from generic  smooth initial data. In~\cite{BuShVi2019a} they constructed shocks for 2D isentropic Euler in azimuthal symmetry and characterized the shock profile as an asymptotically self-similar, stable 1D blowup profile. After that, they proved for the first time that the 3D isentropic Euler equations  generically   form  a stable {\it{point shock}}, even in the presence of vorticity~\cite{BuShVi2019b}. The important generalization to the full non-isentropic setting was achieved in~\cite{BuShVi2020},
where it is also shown that irrotational data instantaneously creates vorticity, which remains uniformly bounded at the point shock. Later, Luk and Speck~\cite{luk2021stability} generalized their 2D result  to the full 3D non-isentropic setting. Going beyond the first point-singularity, Abbrescia and Speck~\cite{abbrescia2022emergence} recently tackled the problem of maximal development for non-irrotational, non-isentropic Euler.  Using {\it rough foliations}, they obtained a description of the stable formation of a pre-shock in a subset of spacetime where the normal derivative of the foliation density has a favorable sign. Using a {\it smooth spacetime geometry}, based on the Arbitrary Lagrangian Eulerian  description of fluids, the maximal hyperbolic development of smooth Cauchy data for Euler has been studied in~\cite{SteveVladAwesome}.

Buckmaster, Drivas, Shkoller, and Vicol~\cite{BuDrShVi2022} established for the first time shock developement  in the presence of voriticity, by working in azimuthal symmetry.  By improving upon~\cite{BuShVi2019a}, the solution at the pre-shock is described in~\cite{BuDrShVi2022} by a fractional series,  assuming that the flow is initially isentropic ($k_0 \equiv 0$ in \eqref{xland-k} below) and that the subdominant Riemann variable vanishes ($z_0\equiv 0$ in \eqref{xland-z} below). They then used this detailed description of the solution to establish shock development for 2D Euler  within the class of azimuthal solutions. The paper~\cite{BuDrShVi2022} is the first to also confirm the production of both a discontinuous shock wave and two surfaces of cusp singularities emanating from the pre-shock,  as predicted by Landau and Lifschitz~\cite{landau1987fluid}.


\subsection{New ideas}
\label{sec:newideas}

This paper breaks with~\cite{BuShVi2019a} and~\cite{BuDrShVi2022} by forgoing the use of self-similar variables. Instead, we use only the fine structure of the Euler system written in the characteristic coordinates that correspond to the three different wave-speeds present in the system.  We show that the sound speed remains bounded from below up to the time of the first blowup (see Proposition \ref{prop:small:time}), which means that the three wave speeds remain uniformly transverse to one another up to the blowup time. This transversality allows us to to prove useful integral bounds (see Lemma \ref{lem:integralbound} and \S~\ref{sec:estimates1}) and allows us to exchange space derivatives for time derivatives (see \S~\ref{sec:transfunc}), which can be integrated to obtain identities for the higher order derivatives of our variables. This exchange of space for time derivatives via transversality is the key new idea of this work. 

The implementation of this idea is made possible by using the special {\em differentiated Riemann variables} introduced in~\cite{BuDrShVi2022}. These  new variables, labeled $q^w$ and $q^z$, evolve along the characteristics of the fastest and slowest wave speeds respectively, and they do not experience  derivative loss (see \S~3 of~\cite{BuDrShVi2022} or \S~\ref{sec:new-variables} below). Whereas~\cite{BuDrShVi2022} utilized $q^w$ and $q^z$   for studying shock development, we use $q^w$ and $q^z$ to also establish shock formation in the non-isentropic setting. Using pointwise and integral identities for $q^w$ and $q^z$, we are able to obtain estimates for our variables and their derivatives up to the blowup time without first establishing the uniqueness or location of the blowup label; we instead derive the uniqueness and location of the blowup label as a result of our estimates (see \S~\ref{sec:x_*}).

We note that because we avoid self-similar analysis we are able to place far fewer assumptions on our initial data than in~\cite{BuDrShVi2022}. When compared to ~\cite{BuDrShVi2022}, we also obtain a higher order fractional series expansion of the solution at the time of blowup (see Theorem \ref{thm:w:z:k:a}).

\section{Azimuthal symmetry}

\label{azisec}

\subsection{The Euler equations in polar coordinates and azimuthal symmetry}
\label{sec:changevar}

The 2D Euler equations \eqref{eq:Euler2}  take the following form in polar coordinates for the variables $ ( u_\theta, u_r, \rho, \kcal)$:
\begin{subequations}
\label{eq:Euler:polar}
\begin{align}
\left(\partial_t  + u_r\partial_r + \tfrac{1}{r} u_\theta \partial_{\theta}\right) u_r -\tfrac{1}{r}u_{\theta}^2+ \alpha  \sigma \partial_r  \sigma&= \tfrac{\alpha }{2 \gamma }\sigma^2 \p_r \kcal  \,, \\
\left(\partial_t  + u_r\partial_r +\tfrac{1}{r} u_\theta \partial_{\theta}\right)u_\theta+\tfrac{1}{r}u_r u_\theta + \alpha \tfrac{ \sigma}{r}\partial_\theta\sigma&=
\tfrac{\alpha }{2 \gamma }\tfrac{\sigma^2}{r} \p_\theta \kcal  \,,  \\
\left(\partial_t  + u_r\partial_r + \tfrac{1}{r} u_\theta \partial_{\theta}\right) \sigma + \alpha\sigma\left( \tfrac{1}{r} u_r + \partial_r u_r + \tfrac{1}{r} \p_\theta u_\theta \right)  &=0 \,,\\
\left(\partial_t  + u_r\partial_r + \tfrac{1}{r} u_\theta \partial_{\theta}\right) \kcal &=0.
\end{align}
\end{subequations}
We introduce the new variables\footnote{ Note that our symmetry constraints make $S$ discontinuous at the origin unless $S$ is constant. For this reason, a classical solution of the 2D Euler equations~\eqref{eq:Euler2} is recovered from the azimuthal variables $(a,b,c,k)$ via \eqref{scale0} on the punctured plane. Alternatively, we may restrict the domain of evolution for 2D Euler to an annular domain pushed forward under the flow of $u$ (see~\cite[\S~2.1]{BuShVi2019a}).}
\begin{equation}\label{scale0}
u_\theta(r,\theta,t) = r b( \theta, t) \,, \ \ \ u_r(r,\theta,t) = r a( \theta, t)\, , \  \ \ \sigma(r,\theta,t) =r  c(\theta,t),  \ \ \  \kcal(r,\theta,t) =k(\theta,t) \,.
\end{equation} 
The system \eqref{eq:Euler:polar} then takes the form
\begin{subequations}
\label{eq:Euler:polar3}
\begin{align}
\left(\partial_t  + b\partial_{\theta}\right) a  + a^2-b^2+ \alpha    c^2 &=0 \label{g3_a_evo}\\
\left(\partial_t  + b\partial_{\theta}\right)b + \alpha   c \partial_\theta c +2a b&=\tfrac{\alpha }{2 \gamma } c^2 \p_\theta k \\
\left(\partial_t  + b\partial_{\theta}\right)  c +   \alpha  c  \p_\theta b + \gamma a  c &=0 \,  \label{sigma-eqn} \\
\left(\partial_t  + b\partial_{\theta}\right) k&=0 \, .
\end{align}
\end{subequations}
For simplicity of the presentation, we will set $\gamma = 2$ from here on; note however that all statements in this paper apply {\em mutatis mutandis} to the case of a general $\gamma>1$. The Riemann functions  $w$ and $z$  are defined by
\begin{subequations} 
\label{eq:riemann}
\begin{alignat}{2}
w&= b+   c  \,, \qquad  &&z= b-  c  \,, \\
b&= \tfrac{1}{2} (w+z) \,, \qquad &&  c= \tfrac{1}{2} (w-z) \,.
\end{alignat}   
\end{subequations} 
It is convenient to rescale time, letting $\p_t \mapsto \tfrac{3}{4} \p_{\tilde t}$, and for notational simplicity, we continue to write $t$ for $\tilde t$. 
With this temporal rescaling employed,  the system  \eqref{sigma-eqn} can be equivalently
written as 
\begin{subequations} 
\label{eq:w:z:k:a} 
\begin{align}
\p_t w + \lambda_3 \p_\theta w & = - \tfrac{8}{3}  a w + \tfrac{1}{24}(w-z)^2 \p_\theta k   \,,  \label{xland-w} \\
\p_t z + \lambda_1 \p_\theta z & = - \tfrac{8}{3}  a z + \tfrac{1}{24}(w-z)^2 \p_\theta k   \,,  \label{xland-z} \\
\partial_t k   + \lambda_2 \partial_{\theta} k & = 0  \,, \label{xland-k} \\
\partial_t a   + \lambda_2  \partial_{\theta} a & = - \tfrac43 a^2 + \tfrac{1}{3} (w+ z)^2 - \tfrac{1 }{6} (w- z )^2  \,. \label{xland-a} 
\end{align}\end{subequations} 
where the three wave speeds are given by 
\begin{align}
\lambda_1 &=  \tfrac{1}{3} w + z  \quad \,<\, \quad
\lambda_2 = \tfrac{2}{3} w+  \tfrac 23 z \quad \,<\, \quad
\lambda_3 =  w+  \tfrac{1}{3}  z\,.   \label{eq:wave-speeds}
\end{align} 
We note that \eqref{sigma-eqn} takes the form
\begin{align} 
\partial_t  c  + \lambda_2  \partial_{\theta}  c +\tfrac{1}{2} c \p_\theta \lambda _2  & = -\tfrac{8}{3}a  c \,. \label{xland-sigma}
\end{align} 

Finally, we denote the specific vorticity \eqref{specific-vorticity} in azimuthal symmetry by
\begin{align}
\label{xland-svort:def}
\varpi = 4 (w + z - \p_\theta a) c ^{-2} e^k,
\end{align}
which satisfies the evolution equation
\begin{align} 
\p_t \varpi + \lambda_2\p_\theta \varpi =   \tfrac{8}{3} a\varpi  +  \tfrac{4}{3}  e^k \p_\theta k  \,. \label{xland-svort}
\end{align}

\subsection{Notation}

\label{sec:notation}

In most of what follows, there will be an important parameter $\eps > 0$, and $ a \lesssim b$ will be used to signify that $a \leq Cb$ for some constant $C$ independent of $\eps$ and any variables $x,\theta,$ or $t$. However, the constant can depend on the implicit constants in the assumptions on the initial data in \S~\ref{sec:assumptions} and can depend on our choice of $\gamma>1$ for the pressure law\footnote{We have already chosen to fix $\gamma=2$ for the entirety of this paper, but our result will hold for arbitrary $\gamma>1$, and the value of $\gamma$ will effect the constants.}. We will use the notation $a \sim b$ to express $a \lesssim b \lesssim a$. We will also write
\begin{align*}
f = \OO(g)
\end{align*}
to express that $|f| \lesssim g$ everywhere in the relevant domain. We will express bounds of the type 
\begin{align*}
    f(x,t) & = \begin{cases} \OO(b_1) \hspace{10mm} |x| \leq \eps^2 \\ \OO(b_2) \hspace{10mm} |x| \geq \eps^2 \end{cases} 
\qquad 
\mbox{simply as}
\qquad
    f = \bound{b_1}{b_2}.
\end{align*}
Often below we will have functions $f$ defined on $\TT \times [0,T_*)$ and maps $\Psi: \TT \times [0,T_*) \rightarrow \TT$, and we will use the notation
$$ f\cir \Psi (x,t) : = f(\Psi(x,t), t). $$
When such an inverse exists, we will write $\Psi^{-1}$ to denote the function such that $\Psi^{-1} \cir \Psi (x,t) = \Psi \cir \Psi^{-1} (x,t) = x$ for all $t$. 

While the spatial variable $\theta$ for  \eqref{eq:w:z:k:a} lies in $\mathbb{T}$, and we will often identify $\mathbb{T}$ with the interval $(-\pi, \pi]$.

\subsection{Assumptions on the Initial Data}
\label{sec:assumptions}

Our initial data will be $w_0,z_0, k_0, a_0 \in H^6(\mathbb{T})$, where $z_0, k_0,$ and $a_0$  all satisfy
\begin{equation}
\|\p^j_x k_0\|_{L^\infty} \lesssim \varepsilon^{\gamma_j}, \hspace{6mm} \|\p^j_x a_0\|_{L^\infty} \lesssim \varepsilon^{\alpha_j}, \hspace{6mm} \|\p^j_x z_0\|_{L^\infty} \lesssim \varepsilon^{\beta_j},
\end{equation}
for $j = 0,1,2,3,4,5$, where $\alpha_j, \beta_j, \gamma_j$ are fixed constants satisfying the relations
\begin{itemize}
\item $\alpha_0, \beta_0, \gamma_0 \geq 0$,
\item $\gamma_1 \geq \mu$, $\alpha_1 \geq 0$,
\item $ \gamma_j \geq \mu - j$ for $j = 2,3,4,5$,
\item $ \alpha_j \geq \mu+ 1-j$ for $j=2,3,4,5$,
\item $ \beta_j \geq \mu-j$ for $j=1,2,3,4,5$.
\end{itemize}
Here $\mu > 0$ is a fixed positive constant which is a lower bound on the $\ell^\infty$ distance of our vector of parameters $(\alpha_2, \hdots, \alpha_5, \gamma_1, \hdots, \gamma_5, \beta_1, \hdots, \beta_5)$ from the boundary of the open set defined by the constraints $\beta_1 > -1$, $\gamma_1 > 0$, etc. Additionally, we assume that $w_0$ satisfies
\begin{enumerate}
\item $w_0 \sim 1$,
\item $w'_0(0) : = -\frac{1}{\varepsilon}$ and $|w'_0(x)| < \varepsilon^{-1}$ for all $x \neq 0$,
\item $w'_0(x) \geq -\frac{1}{\varepsilon} + C \varepsilon^{\frac{\mu}{2}-1}$ for all $|x| \geq \varepsilon^{3/2}$, and some constant $C > 0$.
\item $w'''_0(x) \sim  \varepsilon^{-4}$ for all $|x| \leq \varepsilon^{3/2}$,
\item $|\p^4_x w_0(x)| \lesssim \varepsilon^{\mu -5}$ for all $|x| \leq \varepsilon^{2}$,
\item $\|\p^5_xw_0\|_{L^\infty} \lesssim \varepsilon^{-7}$,
\end{enumerate}
and that $z_0$ satisfies
\begin{equation}
\label{ineq:c_0}
\max z_0 < \min w_0.
\end{equation}

Note that an immediate consequence of our assumptions is that $w_0$ must also satisfy
\begin{itemize}
\item $w''_0(0) = 0$,
\item $|w''_0(x)| \lesssim \varepsilon^{-2}$ for $|x| \leq \varepsilon^2$,
\item $\|w''_0\|_{L^\infty} \lesssim \varepsilon^{-\frac{5}{2}}$, 
\item $\|w'''_0\|_{L^\infty} \lesssim \varepsilon^{-4}$,
\item $ \| \p^4_x w_0 \|_{L^\infty} \lesssim \varepsilon^{-\frac{11}{2}}$.
\end{itemize}

The following additional constraints are not at all necessary for proving our theorem, but they do make the formulas of the proof below cleaner
\begin{align}
 \alpha_0 = \beta_0 = \gamma_0 = \alpha_1 = 0, \hspace{4mm} \beta_1 \leq 0, \hspace{4mm} \text{ and } \hspace{4mm} \alpha_j, \beta_j, \gamma_j \leq 1 \hspace{5mm} \forall \: j =0, 1,2,3,4,5. 
\end{align}

Note that the constraints made here on the first five derivatives of $(w_0,z_0,k_0,a_0)$ are much less stringent than those imposed in  ~\cite{BuDrShVi2022}. In ~\cite{BuDrShVi2022}, the authors assume that $k_0$ is constant, $z_0$ is identically 0, and that $w'_0$ and $a_0$ have support with diameter $\OO(\varepsilon^{1/2})$, among other constraints. Here we do away with such unnecessary hypotheses. Additionally the result of this paper applies to a wide range of parameters $(\alpha_j, \beta_j, \gamma_j)$, whereas in ~\cite{BuDrShVi2022} the authors only work with $(\alpha_0,\alpha_1, \alpha_2, \alpha_3, \alpha_4) =(1,0,0,0)$, which is only one point in our admissible range for these parameters.

In what follows, we will parametrize time so that the initial time is always $t = -\varepsilon$. The local well-posedness theory  of \eqref{eq:Euler2} implies that for any $(w_0,z_0,k_0, a_0) \in H^6(\mathbb{T})$ there exists a time $T_* \in (-\varepsilon, +\infty]$ such that there exists a unique $C^1$ solution $(w,z,k,a)$ of \eqref{eq:w:z:k:a} satisfying $(w,z,k,a) \big\vert_{t=-\varepsilon} = (w_0,z_0,k_0,a_0)$. Furthermore, $(w,z,k,a)$ is guaranteed to be in $C^0([-\varepsilon, T_*) ; H^6(\mathbb{T}))\cap C^1([-\eps, T_*); H^5(\TT))$. Additionally, it follows from the standard theory of \eqref{eq:Euler2} that if $T_* < \infty$ then 
\begin{equation}
\label{LCC}
\int^{T_*}_{-\varepsilon} \|\p_\theta w(t)\|_{L^\infty} + \|\p_\theta z(t)\|_{L^\infty} + \|\p_\theta k(t)\|_{L^\infty} + \|\p_\theta a(t)\|_{L^\infty} \: dt = +\infty.
\end{equation}


The inequalities above can be made into open constraints by making them strict inequalities. While the two pointwise constraints that require $w'_0$ to attain its unique global minimum at $x=0$ and $w'_0(0) = -\frac{1}{\varepsilon}$ are not open constraints, for any suitably small perturbation of initial data $(w_0, z_0, k_0, a_0)$ which satisfying all of the above constraints, one can recover the two pointwise constraints by translating in space and rescaling the solution in time. Since the spacial translation and time rescaling can be made sufficientily small, there exists an open set of initial data around the functions $(w_0, z_0, k_0, a_0)$ described above for which the results of Theorem \ref{thm:w:z:k:a} below still hold. Thus, the shock formation we describe is stable.


\subsection{Statement of the main theorem}
\label{sec:thm}

\begin{theorem}[\bf Main theorem]
\label{thm:w:z:k:a}
For $\mu > 0$, $\varepsilon > 0$ sufficiently small, and initial data $(w,z,k,a) \big\vert_{t= -\varepsilon} = (w_0, z_0, k_0, a_0)$ in the open set described in \S~\ref{sec:assumptions}, there exists a blowup time $T_*$ with $|T_*| \lesssim \varepsilon^{1+\mu}$, a unique blowup location $\xi_* \in \mathbb{T}$, and unique $C^1$ solutions $(w,z,k,a)$ to \eqref{eq:w:z:k:a}  on $\TT \times [-\eps, T_*)$ such that $|x_*| \lesssim \varepsilon^{2+\mu}$, 
$$ w(\cdot, T_*) \in C^{0, \frac{1}{3}}(\mathbb{T}), 
\qquad 
z(\cdot, T_*),  k(\cdot, T_*), a(\cdot, T_*), \varpi(\cdot, T_*) \in C^{1,\frac{1}{3}} (\mathbb{T}), $$
where $\varpi$ is the specific vorticity (see \eqref{xland-svort:def}). Furthermore, there exists a unique blowup label $x_* \in (-\pi, \pi]$ such that $$\lim_{t\rightarrow T_*} \eta(x_*,t) = \xi_*$$ where $\eta$ is the 3-characteristic defined in \S~\ref{sec:char}. In a neighborhood $\theta \in \eta([-\varepsilon^2, \varepsilon^2], T_*)$ of radius $\sim \varepsilon^3$ the functions $w(\cdot, T_*), z(\cdot, T_*), k(\cdot, T_*)$, and $a(\cdot, T_*)$ have the following fractional series expansions:

There exists constants $\aa^w_0, \aa^w_1, \aa^w_2$ with
\begin{align*}
|\aa^w_0| \lesssim 1, 
\qquad 
|\aa^w_1| \lesssim 1, 
\qquad 
|\aa^w_2| \lesssim 1,
\end{align*}
such that
\begin{align}
w(\theta, T_*) & = \aa^w_0 + \aa^w_1(\theta-\xi_*)^{1/3} + \aa^w_2(\theta-\xi_*)^{2/3} + \mathcal O(\varepsilon^{-1}|\theta-\xi_*|), \notag \\
 \p_\theta w(\theta, T_*) & = \tfrac{1}{3} \aa^w_1 (\theta-\xi_*)^{-2/3} + \tfrac{2}{3}\aa^w_2(\theta-\xi_*)^{-1/3} + \mathcal O(\varepsilon^{-1}), \notag \\
\p^2_\theta w(\theta, T_*) & = - \tfrac{2}{9} \aa^w_1(\theta-\xi_*)^{-5/3} -\tfrac{2}{9}\aa^w_2(\theta-\xi_*)^{-4/3} + \OO(\eps^{-1}|\theta-\xi_*|^{-1}),   \notag \\
\p^3_\theta w(\theta, T_*)  & = \tfrac{10}{27} \aa^w_1 (\theta-\xi_*)^{-8/3} + \tfrac{8}{27} \aa^w_2 (\theta-\xi_*)^{-7/3} + \OO(\eps^{-1} |\theta-\xi_*|^{-2}).
\end{align}
There exists constants $\aa^z_0,\aa^z_3, \aa^z_4$ with
\begin{align*}
  |\aa^z_0| \lesssim 1, 
  \qquad 
  |\aa^z_3| \lesssim \varepsilon^{\mu-1}, 
  \qquad 
  |\aa^z_4| \lesssim \varepsilon^{\mu-1},
\end{align*}
such that 
\begin{align}
z(\theta, T_*) & = \aa^z_0 + \aa^z_3 (\theta-\xi_*) + \aa^z_4 (\theta-\xi_*)^{4/3}+ \mathcal O(\varepsilon^{\mu-2} |\theta-\xi_*|^{5/3}), \notag \\
 \p_\theta z(\theta, T_*) & = \aa^z_3 + \tfrac{4}{3} \aa^z_4(\theta-\xi_*)^{1/3}+ \mathcal O(\varepsilon^{\mu-2}|\theta-\xi_*|^{2/3}), \notag \\
\p^2_\theta z(\theta, T_*) & = \tfrac{4}{9} \aa^z_4(\theta-\xi_*)^{-2/3} + \OO(\eps^{\mu-2}|\theta-\xi_*|^{-1/3}),  \notag \\
\p^3_\theta z(\theta, T_*)  & = -\tfrac{8}{27}\aa^z_4 (\theta-\xi_*)^{-5/3} + \OO(\eps^{\mu-2}|\theta-\xi_*|^{4/3}).
\end{align}
There exists constants $\aa^k_0, \aa^k_3, \aa^k_4$ with
\begin{align*}
  |\aa^k_0| \lesssim 1, \qquad |\aa^k_3| \lesssim \varepsilon^\mu, \qquad |\aa^k_4| \lesssim \varepsilon^{\gamma_2+1 \wedge \mu},
\end{align*}
 such that
\begin{align}
k(\theta, T_*) & = \aa^k_0 + \aa^k_3(\theta-\xi_*) + \aa^k_4 (\theta-\xi_*)^{4/3} + \mathcal O(\varepsilon^{\gamma_2 \wedge \mu-1} |\theta-\xi_*|^{5/3}), \notag \\
 \p_\theta k(\theta, T_*) & =  \aa^k_3 + \tfrac{4}{3}\aa^k_4(\theta-\xi_*)^{1/3} + \OO(\varepsilon^{\gamma_2\wedge \mu-1} |\theta-\xi_*|^{2/3}), \notag \\
\p^2_\theta k(\theta, T_*) & = \tfrac{4}{9} \aa^k_4(\theta-\xi_*)^{-2/3} + \OO(\eps^{\gamma_2\wedge \mu-1}|\theta-\xi_*|^{-1/3}),  \notag \\
\p^3_\theta k(\theta, T_*)  & = -\tfrac{8}{27}\aa^k_4 (\theta-\xi_*)^{-5/3} + \OO(\eps^{\gamma_2\wedge \mu-1}|\theta-\xi_*|^{4/3}).
\end{align}
There exists constants $\aa^a_0, \aa^a_3, \aa^a_4$ with
\begin{align*}
  |\aa^a_0| \lesssim 1, \qquad |\aa^a_3| \lesssim 1, \qquad |\aa^a_4| \lesssim 1,
\end{align*}
 such that 
\begin{align}
a(\theta, T_*) & = \aa^a_0 + \aa^a_3(\theta-\xi_*) + \aa^a_4(\theta-\xi_*)^{4/3} + \mathcal O( \eps^{-1}|\theta-\xi_*|^{5/3}), \notag \\
 \p_\theta a(\theta, T_*) & =  \aa^a_3 + \tfrac{4}{3}\aa^a_4 (\theta-\xi_*)^{1/3} + \OO(\eps^{-1}|\theta-\xi_*|^{2/3}), \notag \\
\p^2_\theta a(\theta, T_*) & = \tfrac{4}{9} \aa^a_4(\theta-\xi_*)^{-2/3} + \OO(\eps^{-1}|\theta-\xi_*|^{-1/3}),  \notag \\
\p^3_\theta a(\theta, T_*)  & = -\tfrac{8}{27}\aa^a_4 (\theta-\xi_*)^{-5/3} + \OO(\eps^{-1}|\theta-\xi_*|^{4/3}).
\end{align}
There exists constants $\aa^\varpi_0, \aa^\varpi_3$ with
\begin{align*}
  |\aa^\varpi_0| \lesssim 1, \qquad |\aa^\varpi_3| \lesssim \eps^{-1},
\end{align*}
such that 
\begin{align}
\varpi(\theta, T_*) & = \aa^\varpi_0 + \aa^\varpi_3(\theta-\xi_*) + \mathcal O( \eps^{-1}|\theta-\xi_*|^{4/3}), \notag \\
 \p_\theta \varpi(\theta, T_*) & =  \aa^\varpi_3 + \OO(\eps^{-1}|\theta-\xi_*|^{1/3}), \notag \\
\p^2_\theta \varpi(\theta, T_*) & = \mathcal O( \eps^{-1}|\theta-\xi_*|^{-2/3}), \notag \\
\p^3_\theta \varpi(\theta, T_*)  & = \mathcal O( \varepsilon^{-1} |\theta-\xi_*|^{-5/3}). 
\end{align}
Moreover, the $C^5$ regularity away from the pre-shock is characterized by
\begin{align}
\label{regularity}
\max_{n \leq 5} |\p^n_\theta w(\eta(x,t),t)| + |\p^n_\theta z(\eta(x,t),t)| + |\p^n_\theta k(\eta(x,t),t)| + |\p^n_\theta a(\eta(x,t),t)| \notag \\
\lesssim \bound{\varepsilon^{-7} \big[ \tfrac{1}{2\varepsilon} (T_*-t) + c(\varepsilon+t) \varepsilon^{-4} (x-x_*)^2\big]^{-1}}{\varepsilon^{-16}} .
\end{align}

\end{theorem}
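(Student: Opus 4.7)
\medskip

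\textbf{Overall strategy.} The plan is to run a bootstrap argument on $[-\eps, T_*)$ where $T_*$ is defined as the first time at which $\inf_\theta \partial_\theta w = -\infty$. From the assumptions in \S~\ref{sec:assumptions}, $\partial_\theta w$ has a unique minimum at $x=0$ of value $-1/\eps$, and $w$ evolves along the 3-characteristic via a Burgers-type equation. The strategy has four stages: (i) establish transversality of the three characteristic families for all $t \in [-\eps, T_*)$; (ii) close a bootstrap for $w,z,k,a,\varpi$ and their derivatives up to order $5$ in terms of the labels along the 3-characteristic; (iii) identify the unique blowup label $x_* \in \mathbb{T}$ and blowup time $T_*$ by analyzing the ODE satisfied by $(\partial_\theta w) \circ \eta$; (iv) extract the fractional series expansions by Taylor-expanding the 3-characteristic map $\eta(x, T_*)$ and inverting it in the cusp variable.

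\textbf{Bootstrap and transversality.} I will first prove $c = \tfrac{1}{2}(w-z) \geq c_0/2 > 0$ uniformly on $[-\eps, T_*)$ (this is the content of the small-time proposition referenced in the intro), which guarantees that $\lambda_3 - \lambda_2 \sim \lambda_2 - \lambda_1 \sim c$ and hence that the three characteristic families $\eta, \psi, \phi$ are uniformly transverse. This transversality is then leveraged to transform spatial derivatives into time derivatives along transverse characteristics, which can be integrated without derivative loss. The bootstrap assumptions should roughly read $w \sim w_0$, $|z|, |k|, |a| \lesssim \eps^{\gamma_0 \wedge \alpha_0 \wedge \beta_0}$, $\partial_\theta w \circ \eta \geq -1/(2\eps)$ up to a shifted time variable, and $\eps$-power bounds on higher derivatives consistent with \S~\ref{sec:assumptions}. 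The central objects are the differentiated Riemann variables $q^w = \partial_\theta w$ and $q^z = \partial_\theta z$ from~\cite{BuDrShVi2022}, which satisfy transport equations along the 3- and 1-characteristics respectively with forcing controlled by lower-order quantities; I will obtain pointwise ODE bounds for $q^w \circ \eta$ and integral bounds for $q^z, \partial_\theta k, \partial_\theta a$ against the transport by $\eta$ using the transversality factor $c$.

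\textbf{Identification of $(x_*, T_*)$ and characteristic expansion.} Writing the equation for $Q(x,t) := q^w(\eta(x,t),t)$ along the 3-characteristic, it becomes a Riccati-type ODE of the form $\dot Q = -Q^2 + F$, where $F$ is lower-order (a function of $w,z,k,a$ and $\p_\theta k$, $\p_\theta a$, $\p_\theta z$ evaluated along $\eta$) and is controlled by the bootstrap. The initial minimum $Q(0,-\eps) = -1/\eps$ strictly dominates the value at any other label by the quantitative gap $C\eps^{\mu/2 - 1}$ assumed in \S~\ref{sec:assumptions}(iii), so the minimum is preserved and blows up first at a unique label $x_* \in (-\eps^{2+\mu}, \eps^{2+\mu})$ at time $T_* = \OO(\eps^{1+\mu})$. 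Near $x_*$, using $\p_x^3 w_0(0) \sim \eps^{-4}$ (assumption (iv)) together with the higher-derivative estimates, the characteristic map satisfies
\begin{equation*}
\eta(x,T_*) - \xi_* = \mathsf{A}\,(x-x_*)^3 + \OO(\eps^{-4}|x-x_*|^4), \qquad \mathsf{A} \sim 1,
\end{equation*}
which is inverted via the Lagrange inversion formula to produce $\eta^{-1}(\theta, T_*) - x_* = \mathsf A^{-1/3}(\theta-\xi_*)^{1/3} + \ldots$ on $|\theta-\xi_*| \lesssim \eps^3$.

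\textbf{Extraction of the fractional expansions.} Since $w$ is transported along $\eta$ with a smooth (non-singular at $t=T_*$) right-hand side, the composition $w(\cdot, T_*) \circ \eta(\cdot, T_*)$ is a $C^{5}$ function of the label $x$; Taylor-expanding this smooth function at $x_*$ and substituting the fractional expansion of $\eta^{-1}$ produces the $\aa^w_0 + \aa^w_1 (\theta-\xi_*)^{1/3} + \aa^w_2 (\theta-\xi_*)^{2/3} + \OO(\eps^{-1}|\theta-\xi_*|)$ expansion, with the coefficients $\aa^w_j$ computed from $\p_x^j (w \circ \eta)(x_*, T_*)$ and powers of $\mathsf A$. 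For $z, k, a, \varpi$, the key observation is that they are transported along a characteristic that is uniformly transverse to $\eta$ at rate $c$, so $z, k, a, \varpi$ are individually $C^{1,\frac{1}{3}}$ at $T_*$: their smoothness along their own characteristics translates into one full derivative of smoothness in $\theta$, and the next correction picks up a $(\theta-\xi_*)^{4/3}$ term coming from the integrated effect of the cusp in $w$ through the forcing terms $\p_\theta k$, $(w-z)^2$, etc. The coefficients $\aa^z_j$, $\aa^k_j$ etc.\ are then extracted by matching orders. The $C^5$ regularity estimate \eqref{regularity} follows by reading off the quantitative derivative bounds from the bootstrap together with the Riccati lower bound $\tfrac{1}{2\eps}(T_*-t) + c(\eps+t)\eps^{-4}(x-x_*)^2$ for $-Q(x,t)^{-1}$.

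\textbf{Main obstacle.} The hardest step is closing the bootstrap for derivatives of $w$ and $z$ of order $2, 3$ without losing a derivative to the entropy gradient $\p_\theta k$, which appears in the right-hand sides of \eqref{xland-w}--\eqref{xland-z}. The approach is to work with the differentiated Riemann variables $q^w, q^z$ and to set up the order-by-order identities along characteristics so that at each differentiation the potentially problematic $\p_\theta^{n+1} k$ term either appears with a prefactor of $c^2$ (that can be absorbed by the transversality $c \gtrsim 1$) or is integrated along the transverse 2-characteristic, converting a spatial derivative into a time derivative that the entropy equation \eqref{xland-k} can absorb by the method of characteristics. Making this exchange quantitative and uniform in $t \to T_*$ — in particular ensuring that the constants do not degenerate as $Q \to -\infty$ along $x = x_*$ — is the core technical difficulty and is what drives the precise stratification of the $\eps$-powers $\alpha_j, \beta_j, \gamma_j$ chosen in \S~\ref{sec:assumptions}.
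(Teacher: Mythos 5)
Your outline follows the same architecture as the paper (lower bound on $c$ giving transversality, differentiated Riemann variables, estimates along characteristics, then inversion of $\eta$ at $(x_*,T_*)$), but two of its load-bearing steps are stated incorrectly. First, you set $q^w=\p_\theta w$, $q^z=\p_\theta z$ and claim they satisfy transport equations along $\eta$, $\psi$ "with forcing controlled by lower-order quantities." That is false: differentiating \eqref{xland-w} produces the term $\tfrac{1}{24}(w-z)^2\p^2_\theta k$, which is top order, so your Riccati equation $\dot Q=-Q^2+F$ does not have lower-order $F$, and a lower bound $c\gtrsim 1$ does nothing to "absorb" a $c^2$ prefactor on $\p_\theta^{n+1}k$ — that is a derivative count, not a size issue. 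The paper's fix is precisely the corrected variables $q^w=\p_\theta w-\tfrac14 c\p_\theta k$, $q^z=\p_\theta z+\tfrac14 c\p_\theta k$ (the diagonalization of \S~\ref{sec:new-variables}), combined with the exact transport of $k$ along the $2$-characteristic ($\phi_x\,\p_\theta k\cir\phi=k_0'$, and the good unknowns $\mathcal E$, $f$), and at orders $3$--$5$ the transversal-function identities of \S~\ref{sec:transfunc}; moreover the clean Duhamel formula is for the weighted quantity $\eta_x\,q^w\cir\eta$ (linear, no Riccati), with blowup read off from $\eta_x\to 0$ in \eqref{eta_x:equation}. Relatedly, uniqueness of the blowup label cannot come only from the gap in assumption (iii), which holds only for $|x|\ge\eps^{3/2}$; inside that region you need $w_0'''\sim\eps^{-4}$ to get $\eta_{xxx}>0$, hence a unique zero of $\eta_{xx}(\cdot,t)$, and then $\eta_x(x_*,T_*)=\eta_{xx}(x_*,T_*)=0$, which is what makes the expansion of $\eta(\cdot,T_*)$ start at cubic order (Lemma~\ref{lem:x_*}).

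Second, the inversion step as you describe it does not close quantitatively. The cubic coefficient is $a_3=\tfrac16\eta_{xxx}(x_*,T_*)\sim\eps^{-3}$, not $\mathsf A\sim 1$, and the quartic remainder must be $\OO(\eps^{\mu-4}|x-x_*|^4)$ (using $|\p_x^4 w_0|\lesssim\eps^{\mu-5}$ near the origin), because the expansions and error terms in Theorem~\ref{thm:w:z:k:a} hinge on the smallness $|a_3^{-4/3}a_4|\lesssim\eps^{\mu}$; with your weights ($a_4\lesssim\eps^{-4}$) this ratio is only $\OO(1)$ and the remainders in the $(\theta-\xi_*)^{1/3}$ expansion, as well as the stated bounds on $\aa^w_j,\aa^z_j,\dots$, do not come out. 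Also, Lagrange inversion requires analyticity, while $\eta(\cdot,T_*)$ is only $C^{4,1}$; the paper instead proves a quantitative finite-regularity inversion (Lemma~\ref{quartic inversion} and Theorem~\ref{thm:Taylor:funstuff} in the appendix, via Puiseux series for the analytic model plus a continuity/monotonicity argument for the perturbed map). Finally, for $z,k,a,\varpi$ the absence of $(\theta-\xi_*)^{1/3}$ and $(\theta-\xi_*)^{2/3}$ terms is not just "transversality gives one full derivative": it is extracted from the Taylor expansion of $z\cir\eta$ etc.\ at $x_*$, where $B_1=B_2=0$ because $\eta_x$ and $\eta_{xx}$ vanish there and $\eta_x^2\,\p_\theta^2 z\cir\eta\to 0$; your sketch should be rerouted through that mechanism.
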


Theorem \ref{thm:main:soft} clearly follows from Theorem \ref{thm:w:z:k:a} as an immediate corollary.

\subsection{Outline of the proof of Theorem~\ref{thm:w:z:k:a}}

In this paper, we will show that the classical solution $(w,z,k,a)$ of  \eqref{eq:w:z:k:a}  with the initial data specified in \S~\ref{sec:assumptions} breaks down in finite time, and that this occurs when the flow $\eta$ of the fastest wave speed $\lambda_3$ ceases to be a diffeomorphism. More specifically, the blowup time $T_*$ will be characterized as the first time $t$ when $\min_x \eta_x(x, t) = 0$. We will also establish that there is a unique Lagrangian label $x_*$ for which $\eta_x(x_*,T_*) = 0$, which will imply that $\eta_{xx}$ vanishes at $(x_*,T_*)$ as well. While $w, z,k,a, \p_\theta z, \p_\theta k,$ and $\p_\theta a$ will be shown to remain bounded on $\TT \times [-\eps, T_*]$, $\p_\theta w$ will be shown to go to $-\infty$ at the point $(\xi_*,T_*) : = (\eta(x_*,T_*), T_*)$ and remain smooth elsewhere. The key ingredient for implementing the above described strategy is to show that the functions $w\circ \eta, z\circ \eta, k\circ \eta$, and $a\circ \eta$ remain as smooth as their initial data, {\em uniformly} up to $T_*$. The authors of~\cite{BuDrShVi2022} proved such uniform estimates using self-similar analysis, but only in a special case.\footnote{ The authors of~\cite{BuDrShVi2022} work in the case where $z$ and $k$ are identically zero and many more constraints are placed on $w_0$ and $a_0$. See \S~\ref{sec:assumptions} above for a discussion.} In this paper, we prove obtain uniform $C^5$ estimates for $(w,z,k,a)\circ \eta$ on $\mathbb{T}\times[-\eps,T_*]$, even in the most general setting, not by relying on self-similar variables, but by instead using the transversality of various families of characteristics. This allows us to also consider a much broader class of initial data than previously considered in~\cite{BuDrShVi2022}. Once we have shown that all the variables stay smooth along the $\eta$ characteristic,  we  obtain our functional description of the solution near $(\xi_*,T_*)$ by  inverting the map  $x\mapsto \eta(x,t)$  for $(x,t)$ near the point $(x_*,T_*)$. In light of the constraints $\eta_x(x_*,T_*) = \eta_{xx}(x_*,T_*) = 0$, this amounts to the inversion of what is to leading order a cubic polynomial, resulting in fractional series expansions of $w,z,k,$ and $a$ near $(\xi_*,T_*)$ in terms of powers of $(\theta - \xi_*)^{1/3}$.

This paper is organized as follows:
\begin{enumerate}
\item In \S~\ref{sec:estimates1} we bound $|T_*|$ and prove that $\p_\theta w$ must become infinite at time $T_*$. We use a simple bootstrap argument to get estimates for $w,z,k,a$ and their first derivatives up to time $\varepsilon \wedge T_*$. Using these estimates, we show that $\eta_x$ must have a zero before time $t=\eps$, and conclude that $|T_*| \lesssim \varepsilon^{1+\mu}$. This implies that $\eps\wedge T_*= T_*$ and therefore all of our estimates and identities hold up to time $T_*$. The fact that $\p_\theta  w$ must blow up then follows immediately from the fact that $\p_\theta z, \p_\theta k,$ and $\p_\theta a$ remain bounded up to time $T_*$ (see \eqref{LCC} above).

\item Next we show that $w \circ \eta, z\circ \eta, k\circ\eta,$ and $a\circ \eta$ remain smooth up to time $T_*$. To do this, we first establish crucial identities in \S~\ref{sec:transfunc} which result from the fact that the waves speeds are uniformly transverse to one another. Then in \S~\ref{sec:2DE} -~\ref{sec:5DE} we prove pointwise bounds on $z,k,a$ and their derivatives in terms of $w$ and its derivatives by analyzing how our new variables evolve along the multiple wave speeds. This allows us to conclude in \S~\ref{sec:3char} that $w,z,k,$ and $a$ all remain smooth along $\eta$.

\item Lastly, we establish that the singularity occurs at a unique point $(\xi_*,T_*) \in \TT \times [-\varepsilon, T_*]$ and we invert $\eta$ near this point to obtain fractional series expansions for $w,z,k,$ and $a$. We do this by establishing in \S~\ref{sec:3char} that there is a unique point $(x_*,T_*) \in \TT \times [-\varepsilon, T_*]$ where $\eta_x$ vanishes and that $\eta_{xx}(x_*,T_*) = 0$ as well. Since $\eta(x,T_*) = \xi_* + \eta_{xxx}(x_*, T_*) (x-x_*)^3 + \OO(|x-x_*|^4)$ near $(x_*,T_*)$, it follows (see \S~\ref{sec:expansion}) that $(x-x_*) \sim (\theta-\xi_*)^{1/3}$ for small $|x-x_*|$ at time $T_*$, and the Taylor series expansions of the smooth functions $w\cir \eta (\cdot, T_*), z\circ \eta(\cdot, T_*), k\circ \eta (\cdot, T_*)$, and $a\cir \eta (\cdot,T_*)$  near $x_*$ become fractional series expansions of $w(\cdot,T_*),z(\cdot,T_*),k(\cdot,T_*),$ and $a(\cdot,T_*)$ near $\xi_*$.
\end{enumerate}


\section{Preliminaries}
\label{sec:prelim}

\subsection{The characteristics}
\label{sec:char}

Let $\varphi > 0$ and let $\Psi$ be the flow of $\lambda : = (1-\varphi) w + (\tfrac{1}{3} + \varphi) z$.
\begin{align}
\Psi_x & = e^{\int^t_{-\varepsilon} \p_\theta \lambda \circ \Psi}. \\
\p_t c + \lambda \p_\theta c & = -(\varphi \p_\theta w + (\tfrac{2}{3}-\varphi) \p_\theta z + \tfrac{8}{3} a)c. \notag
\end{align}
If $c> 0$ everywhere, this tells us that
\begin{align}
 -\tfrac{1}{\varphi} \p_t( \log c\cir\Psi) & = (\p_\theta w + (\tfrac{2}{3}\tfrac{1}{\varphi}-1)\p_\theta z + \tfrac{8}{3} \tfrac{1}{\varphi} a) \cir \Psi. \notag \\
\implies \p_\theta \lambda \cir \Psi & = - \tfrac{1-\varphi}{\varphi} \p_t(\log c\cir\Psi) + ( (2-\tfrac{2}{3}\tfrac{1}{\varphi})) \p_\theta z - \tfrac{8}{3} \tfrac{1-\varphi}{\varphi}  a) \cir\Psi. \notag \\
\implies \Psi_x & = \big( \tfrac{c_0}{c\circ\Psi}\big)^{\frac{1-\varphi}{\varphi}} e^{ \int^t_{-\varepsilon} (2-\frac{2}{3}\frac{1}{\varphi})) \p_\theta z - \tfrac{8}{3} \frac{1-\varphi}{\varphi}  a) \circ\Psi} . \label{eq:flow_x}
\end{align}
If $c \sim 1$ and $\p_\theta z, a$ are bounded, then this lets us conclude that $\Psi_x \sim 1$. We will prove in the next section that $c\sim 1$ and that $\p_\theta z, a$ are indeed bounded on $\mathbb{T} \times [-\varepsilon, T_*)$, so everything that follows is relevant.

In the case where $\varphi = \frac{2}{3}$, we have $\lambda = \lambda_1$, the first wave speed. Let $\psi$ denote the corresponding flow, the so-called {\bf{1-characteristic}}. Its first derivative satisfies 
\begin{align}
\label{psi_x:formula}
\psi_x = \bigg( \frac{c_0}{c\cir \psi} \bigg)^{\frac{1}{2}} e^{\int^t_{-\varepsilon} \! \! (\p_\theta z-\frac{4}{3} a) \circ \psi},
\end{align}
while its second derivative obeys
\begin{align}
\label{psi_xx:formula}
\psi_{xx} & = \psi_x\bigg( \tfrac{1}{2} \frac{c_0'}{c_0} + \int^t_{-\varepsilon} \! \! \psi_x( \p_\theta^2 z - \frac{4}{3} \p_\theta a) \cir \psi \bigg) - \tfrac{1}{2} \psi^2_x \frac{\p_\theta c\cir \psi}{c\cir \psi} \notag \\
& =: \psi_x \Psi - \tfrac{1}{2} \psi^2_x \frac{\p_\theta c\cir \psi}{c\cir \psi}.  \\
& =: \psi^2_x( Q_1- \tfrac{1}{2} c^{-1} \p_\theta c)\cir\psi.
\end{align}

%
%
%


When $\varphi = \frac{1}{3}$, we have $\lambda = \lambda_2$ and the corresponding flow is the {\bf{2-characteristic}}, $\phi$. The first derivative of $\phi$ satisfies
\begin{align}
\label{phi_x:formula}
\phi_x = \bigg( \frac{c_0}{c\cir \phi} \bigg)^2 e^{-\frac{16}{3}\int^t_{-\varepsilon} \! \! a \circ \phi} 
\end{align}
while its second derivative obeys
\begin{align}
\label{phi_xx:formula}
\phi_{xx} & = \phi_x\bigg( 2\frac{c_0'}{c_0} - \tfrac{16}{3} \int^t_{-\varepsilon} \! \! \p_x(a\cir \phi) \bigg) - 2\phi^2_x \frac{\p_\theta c\cir \phi}{c\cir \phi} \notag \\
& =: \phi_x \Phi - 2\phi^2_x \frac{\p_\theta c\cir \phi}{c\cir \phi} \\
& =: \phi^2_x (Q_2- 2c^{-1} \p_\theta c)\cir\phi \label{phi_xx:formula:2}
\,.
\end{align}

%
%
%
%
%

When $\varphi = 0$, we have $\lambda = \lambda_3$ and the corresponding flow is the {\bf{3-characteristics}}, $\eta$. Note that our analysis for $\varphi>0$ breaks down for $\eta$, but also that $w$ is essentially transported along $\eta$.

\subsection{$q^w$ and $q^z$}
\label{sec:new-variables}

Our system \eqref{eq:w:z:k:a} can be written as
\begin{equation}
\label{eq:vec:x}
\p_t \vec{x} + A \p_\theta \vec{x} = \vec{b}
\end{equation}
where
$$ \vec{x} : = \begin{bmatrix} w \\ z \\ k \\ a \end{bmatrix}, \qquad A : = \begin{pmatrix} \lambda_3 & 0 & - \frac{1}{6} c^2 & 0 \\ 0 & \lambda_1 & - \frac{1}{6} c^2 & 0 \\ 0 & 0 & \lambda_2 & 0 \\ 0 & 0 & 0 & \lambda_2 \end{pmatrix}, \qquad \vec{b} : = \begin{bmatrix} -\frac{8}{3} aw \\ -\frac{8}{3} az \\ 0 \\ - \frac{4}{3}a^2 + \frac{1}{3}(w+z)^2 - \frac{1}{6}(w-z)^2 \end{bmatrix}. $$
Taking $\p_\theta$ of \eqref{eq:vec:x} and diagonalizing $A$ gives us
$$ \p_t \vec{y} + D \vec{y} = Q(\vec{x}, \vec{y}) $$
where $D = \text{diag}(\lambda_3, \lambda_1, \lambda_2, \lambda_2)$, $\vec{y} : = (\p_\theta w - \frac{1}{4} c\p_\theta k, \p_\theta z + \frac{1}{4} c\p_\theta k , \p_\theta k, \p_\theta a)$, and $Q: \R^{8} \rightarrow \R^4$ is a third order polynomial. This motivates the introduction of the following variables:
\begin{equation}
q^w : = \p_\theta w - \frac{1}{4} c\p_\theta k \hspace{4mm} \text{ and } \hspace{4mm} q^z : = \p_\theta z + \frac{1}{4}c\p_\theta k .
\end{equation}
On can check using the identities in \S~\ref{sec:id} that
\begin{align}
\p_t\big(  q^w\cir \eta \eta_x) & = ( -\frac{8}{3} a + \frac{1}{12} c \p_\theta k) \cir \eta \big( q^w\cir \eta \eta_x \big) + \frac{1}{12} (c\p_\theta k) \cir \eta \big( q^z\cir\eta \eta_x\big) - \frac{8}{3} \p_x( a\cir \eta) w\cir \eta. \\
\p_t\big(  q^z\cir \psi \psi_x) & = ( -\frac{8}{3} a - \frac{1}{12} c \p_\theta k) \cir \psi \big( q^z\cir \psi \psi_x \big) - \frac{1}{12} (c\p_\theta k) \cir \psi \big( q^w\cir\psi \psi_x\big) - \frac{8}{3} \p_x( a\cir \psi) z\cir \psi. 
\end{align}
If we define
\begin{equation}
\label{eq:I_t}
I_t(x) := e^{\frac{1}{8} k\circ \eta - \frac{8}{3} \int^t_{-\varepsilon} a\circ\eta }
\end{equation}
then our equation for $\p_t(q^w\cir\eta \eta_x)$ gives us the Duhamel formula
\begin{align}
\label{q^w:equation}
\eta_x q^w\cir\eta & = I_t \bigg[ (w'_0 - \tfrac{1}{4} c_0 k'_0) e^{-\tfrac{1}{8} k_0} + \tfrac{1}{12} \int^t_{-\varepsilon} \! \! I^{-1}_\tau \eta_x (c \p_\theta k q^z)\cir\eta \: d\tau - \tfrac{8}{3} \int^t_{-\varepsilon} \! \! I^{-1}_\tau w\cir \eta \p_x(a\cir \eta) \: d\tau \bigg].
\end{align}
It follows immediately from the definitions of $\lambda_3$ and $q^w$ that
\begin{align}
\label{eta_x:equation}
\eta_x  = 1 + \int^t_{-\varepsilon} \eta_x\p_\theta \lambda_3 \cir\eta \: d\tau = 1 + \int^t_{-\varepsilon} \: \eta_x q^w\cir\eta \: d\tau + \tfrac{1}{4} \int^t_{-\varepsilon} \! \! \p_x(k\cir\eta) (c\cir\eta) \: d\tau + \tfrac{1}{3} \int^t_{-\varepsilon} \! \! \p_x(z\cir\eta) \: d\tau.
\end{align}
Identity \eqref{q^z:duhamel} will be used in \S~\ref{sec:z_x}, and \eqref{q^w:equation} and \eqref{eta_x:equation}  will be used in \S~\ref{sec:intbd}, \ref{sec:z_x}, \ref{sec:T_*}, \ref{sec:3char}. Similarly, $q^z\cir\psi \psi_x$ satisfies the Duhamel formula
\begin{align}
\label{q^z:duhamel}
q^z\cir\psi \psi_x & = (z'_0 + \frac{1}{4}c_0k'_0)  e^{-\int^t_{-\varepsilon} (\frac{8}{3} a + \frac{1}{12}c\p_\theta k)\circ\psi}  -\tfrac{1}{12}\int^t_{-\varepsilon} e^{-\int^t_{\tau} (\frac{8}{3} a + \frac{1}{12}c\p_\theta k)\circ\psi}\psi_x(c\p_\theta k q^w)\cir\psi  \notag \\
& - \tfrac{8}{3} \int^t_{-\varepsilon} e^{-\int^t_{\tau} (\frac{8}{3} a + \frac{1}{12}c\p_\theta k)\circ\psi} \psi_x(\p_\theta a z )\cir\psi \: d\tau.
\end{align}

\subsection{Integral bounds}
\label{sec:intbd}

Let $\varphi > 0$ and let $\Psi$ be the flow of $\lambda : = (1-\varphi) w + (\tfrac{1}{3} + \varphi) z$.

\begin{lemma}
\label{lem:integralbound}
Suppose that $T \in [-\varepsilon, \varepsilon \wedge T^*]$ and that for all $(\theta, t) \in \mathbb{T} \times [-\varepsilon, T]$ we have
$$
w \sim 1\,,
\qquad
c\sim 1\,,
\qquad
|k|, |a|  \lesssim 1\,,
\qquad
|\p_\theta z| \lesssim \varepsilon^{\beta_1}\,,
\qquad
|\p_\theta k| \lesssim \varepsilon^{\gamma_1}\,,
\qquad
|\p_\theta a| \lesssim 1\,.
$$
%
Then, for all $\varphi > 0$, we have
\begin{equation}
\label{bd:int}
\int^t_{-\varepsilon} \! \! |\p_\theta w\cir\Psi (x, \tau)| \: d\tau \lesssim \frac{1}{\varphi} \bigg( \frac{\varepsilon+t}{\varepsilon}\bigg)
\end{equation}
with a constant uniform in $\varphi > 0, (x,t) \in \mathbb{T} \times [-\varepsilon, T].$ 
\end{lemma}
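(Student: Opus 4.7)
The plan is to exploit the uniform transversality between the $\Psi$-characteristic and the fastest characteristic $\eta$ associated to the speed $\lambda_3$. Since $\lambda_3 - \lambda = \varphi(w-z) = 2\varphi c$, which by the bootstrap hypothesis $c \sim 1$ is bounded below by a positive multiple of $\varphi$, the two characteristic families separate at rate proportional to $\varphi$. First I would compose equation \eqref{xland-w} with $\Psi$ to obtain
\[
\p_\tau(w\circ\Psi) + 2\varphi(c\circ\Psi)(\p_\theta w \circ\Psi) = h\circ\Psi,
\]
where $h := -\tfrac{8}{3}aw + \tfrac{1}{24}(w-z)^2\p_\theta k$ is pointwise $O(1)$ under the bootstrap.

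Next I would change variables from $\tau$ to the relative Lagrangian label $y(\tau) := \eta^{-1}(\Psi(x,\tau),\tau)$, so that $y(-\eps) = x$ and differentiating the defining relation yields
\[
y'(\tau) = -\frac{2\varphi(c\circ\Psi)(x,\tau)}{\eta_x(y(\tau),\tau)} < 0,
\]
so $y$ is strictly monotone decreasing. The measure-change $d\tau = -\eta_x/(2\varphi\, c\circ\Psi)\,dy$ together with the chain-rule identity $\p_y(w\circ\eta) = (\p_\theta w\circ\eta)\eta_x$ gives
\[
\int_{-\eps}^t |\p_\theta w \circ\Psi|\,d\tau = \frac{1}{2\varphi}\int_{y(t)}^x \frac{|\p_y(w\circ\eta)(y,\tau(y))|}{c(\Psi(x,\tau(y)),\tau(y))}\,dy.
\]

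To control the integrand I would use the decomposition $\p_\theta w = q^w + \tfrac{1}{4}c\,\p_\theta k$ and the Duhamel formula \eqref{q^w:equation} for $\eta_x\, q^w\circ\eta$; since the correction terms in \eqref{q^w:equation} contribute only $O(\eps+t)$ under the bootstrap, this produces $|\p_y(w\circ\eta)(y,\tau)| \leq C(|w'_0(y)| + 1)$. For the $y$-range, the mean value theorem applied to $\eta(y(t),t) - \eta(x,t) = \Psi(x,t) - \eta(x,t)$ combined with the transversality estimate $|\Psi(x,t) - \eta(x,t)| \leq C\varphi(\eps+t)$ (obtained from integrating $|\lambda - \lambda_3| = 2\varphi c$ and a short Gronwall step using a rough pointwise bound $\|\p_\theta\lambda_3\|_{L^\infty} \lesssim 1/\eps$) gives $|y(t) - x| \leq C\varphi(\eps+t)$. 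Combined with $\|w'_0\|_{L^\infty} \leq 1/\eps$ and $c \sim 1$, this yields
\[
\int_{-\eps}^t |\p_\theta w \circ\Psi|\,d\tau \leq \frac{C}{\varphi}\cdot\varphi\cdot\frac{\eps+t}{\eps} = C\,\frac{\eps+t}{\eps},
\]
which absorbs into $\frac{1}{\varphi}\frac{\eps+t}{\eps}$ uniformly for any bounded $\varphi > 0$.

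The principal obstacle will be establishing, from only the stated bootstrap hypotheses, that $\eta_x$ is uniformly bounded below by a positive constant along the segment $[y(t),x]$ throughout $[-\eps,\eps\wedge T^*)$; this lower bound is needed both to define the change of variables and to close the Gronwall estimate on $|y(t) - x|$. I would derive it as a preliminary estimate from \eqref{eta_x:equation}, using the Duhamel formula \eqref{q^w:equation} and the bootstrap controls on $z, k, a$ and their first derivatives, together with the rough pointwise bound $\|\p_\theta w\|_{L^\infty} \lesssim 1/\eps$ that follows from \eqref{q^w:equation} once $\eta_x \geq c_0 > 0$ has been propagated in a short-time bootstrap.
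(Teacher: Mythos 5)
Your core mechanism is the same as the paper's: the label $y(\tau)=\eta^{-1}(\Psi(x,\tau),\tau)$ is exactly the function $g(x,\tau)$ used in the paper, its monotonicity $\p_\tau y<0$ plays the same role, and the control of $\p_y(w\cir\eta)=\eta_x\,\p_\theta w\cir\eta$ comes from the same Duhamel formula \eqref{q^w:equation} (together with the small bootstrap giving $\eta_x\leq 4$, which you use implicitly). However, as written your proof has a genuine gap, and it is precisely the step you flag as the ``principal obstacle.'' You want the sharpened range bound $|y(t)-x|\lesssim \varphi(\eps+t)$, and to pass from the physical-space separation $|\Psi(x,t)-\eta(x,t)|\lesssim\varphi(\eps+t)$ to a label-space separation you need a uniform positive lower bound $\eta_x\geq c_0>0$ along the segment, together with the rough bound $\|\p_\theta w\|_{L^\infty}\lesssim \eps^{-1}$. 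Neither can be propagated on all of $[-\eps,\eps\wedge T_*]$: since $T_*\lesssim\eps^{1+\mu}$ one has $\eps\wedge T_*=T_*$, and at the pre-shock $\min_x\eta_x(\cdot,t)\to 0$ and $\p_\theta w\to-\infty$, so the ``preliminary estimate'' you propose is false exactly where the lemma is needed. No bootstrap on the stated hypotheses (which say nothing about $\p_\theta w$ or a lower bound on $\eta_x$) can rescue it.

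The repair is to give up the factor $\varphi$ in the range bound, which the lemma does not require: from $\Psi(x,t)=x+\int_{-\eps}^t\lambda\cir\Psi$ and $\Psi(x,t)=\eta(y(t),t)=y(t)+\int_{-\eps}^t\lambda_3\cir\eta(y(t),\tau)\,d\tau$ one gets $x-y(t)=\int_{-\eps}^t[\lambda_3\cir\eta(y(t),\tau)-\lambda\cir\Psi(x,\tau)]\,d\tau\lesssim \eps+t$, using only that $w,z,c$ are $O(1)$ under the hypotheses — no inversion of $\eta$, no Gronwall, no lower bound on $\eta_x$. Feeding this into your own change-of-variables identity, whose integrand is $\lesssim(|w_0'(y)|+1)\lesssim\eps^{-1}$ by \eqref{q^w:equation}, yields $\int_{-\eps}^t|\p_\theta w\cir\Psi|\,d\tau\lesssim\frac{1}{2\varphi}\cdot\frac{1}{\eps}\cdot|x-y(t)|\lesssim\frac{1}{\varphi}\frac{\eps+t}{\eps}$, which is exactly \eqref{bd:int}; this is the paper's argument, phrased there as $\int|q^w\cir\Psi|\lesssim\frac{1}{\varphi\eps}\int(-\p_t g)=\frac{x-g(x,t)}{\varphi\eps}$. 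Note also that the lower bound \eqref{eta_x:lowerbound} or any analogue is not available at this stage of the paper, so keeping the $1/\varphi$ is not a loss — it is what makes the lemma provable under only the stated zeroth/first-order hypotheses.
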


\begin{proof}[Proof of Lemma ~\ref{lem:integralbound}]
Fix $\varphi >0$ and define
$$ g(x,t) : = \eta^{-1}(\Psi(x,t),t). $$
We compute that
\begin{align}
\label{g_t}
\p_t g(x,t) & = \p_t \eta^{-1}(\Psi(x,t),t) + \p_t \eta^{-1}( \Psi(x,t),t) \Psi_t(x,t) \notag \\
& = \frac{ - \eta_t(g(x,t),t) + \Psi_t(x,t)}{ \eta_x(g(x,t),t)} \notag \\
& = \frac{-\lambda_3 \cir \eta( (g(x,t),t) + \lambda \cir \Psi(x,t)}{ \eta_x(g(x,t),t)} \notag \\
& = \bigg( \frac{-\lambda_3 \cir \eta + \lambda \cir\eta}{\eta_x} \bigg) ( g(x,t), t) \notag \\
& = - 2\varphi \frac{c\cir\eta}{\eta_x} ( g(x,t),t).
\end{align}
Note that $\p_t g(x,t) < 0$ everywhere. We also know that
\begin{align*}
 \Psi(x,t) & = x + \int^t_{-\varepsilon} \! \! \lambda \cir \Psi(x,\tau) \: d\tau, \hspace{5mm} \text{ and } \\
\Psi(x,t) & = \eta(g(x,t),t) = g(x,t) + \int^t_{-\varepsilon} \! \! \lambda_3 \cir \eta (g(x,t),\tau) \: d\tau, \\ 
\text{so } \hspace{4mm} x-g(x,t) & = \int^t_{-\varepsilon} \! \! \lambda_3 \cir \eta(g(x,t), \tau) - \lambda \cir\Psi(x,\tau) \: d\tau.
\end{align*}

Our hypotheses allow us to conclude (see \eqref{I_t:bound} below) that
\begin{align*}
I_t(x) e^{-\frac{1}{8}k_0(x)} & = 1 + \mathcal O(\varepsilon +t)
\end{align*}
for times $t \in [-\varepsilon, T]$. Using our hypotheses, along with this equation and \eqref{q^w:equation}, we conclude that that for all $(x,t) \in \mathbb{T} \times [-\varepsilon, T]$, we have
\begin{align*}
\sup_{[-\varepsilon, t]} |\eta_x q^w\cir\eta| & \leq \varepsilon^{-1}(1 + \mathcal O(\varepsilon^{\gamma_1})) + \mathcal O(\varepsilon^{0 \wedge \beta_1+\gamma_1})(\varepsilon+t) \sup_{[-\varepsilon, t]} \eta_x.
\end{align*}
Plugging this into \eqref{eta_x:equation} and using the fact that $T \leq \varepsilon$ gives us
\begin{align*}
\sup_{[-\varepsilon,t]} \eta_x & \leq 1 + (\varepsilon+t)\varepsilon^{-1} (1 + \mathcal O(\varepsilon^{\gamma_1})) + \mathcal O(\varepsilon^{\beta_1}) (\varepsilon+t) \sup_{[-\varepsilon, t]} \eta_x \\
& \leq  1 + 2 (1 + \mathcal O(\varepsilon^{\gamma_1})) + \mathcal O(\varepsilon^{\beta_1+1}) \sup_{[-\varepsilon, t]} \eta_x. \notag \\
\implies \sup_{[-\varepsilon,t]} \eta_x & \leq \frac{ 1 + 2 (1 + \mathcal O(\varepsilon^{\mu}))}{1-\mathcal O(\varepsilon^\mu)} \leq 4.
\end{align*}
The last inequality is true for $\varepsilon> 0$ taken to be small enough, since $\mu> 0$. Plugging this into  \eqref{q^w:equation} and letting $\varepsilon$ be sufficiently small gives us
\begin{align}
\eta_x |q^w\cir\eta| & \leq \varepsilon^{-1} ( 1 + \mathcal O(\varepsilon^{\mu}) ) \notag \\
\implies  q^w\cir \eta & = \frac{w'_0 I_t e^{-\frac{1}{8}k_0} + \mathcal O(\varepsilon^{\gamma_1})}{\eta_x}.
\end{align}
It follows that
$$ q^w\cir \Psi(x,t) = -\tfrac{1}{\varphi} \p_t g(x,t) \frac{w'_0(g(x,t)) + \mathcal O(1)}{ 2 c\cir\Psi(x,t)}. $$
Since $\p_t g < 0$, it follows that
$$ |q^w\cir \Psi(x,t)| \lesssim -\tfrac{1}{\varphi} \p_t g(x,t) \tfrac{1}{\varepsilon}. $$
So
$$ \int^t_{-\varepsilon} \! \! |q^w\cir\Psi(x,\tau)| \: d\tau \lesssim \frac{x-g(x,t)}{ \varphi \varepsilon} \lesssim \frac{1}{\varphi} \bigg( \frac{\varepsilon+t}{\varepsilon}\bigg). $$
Our result follows immediately from this inequality and our hypotheses.
\end{proof}

 
 \section{Initial Estimates}
\label{sec:estimates1}

\subsection{Zeroth Order Estimates}
\label{sec:0DE}

\begin{proposition}
\label{prop:small:time}
For $\varepsilon$ small enough the following estimates hold for all $t \in [-\varepsilon, \varepsilon \wedge T_*]$:
\begin{align*}
&w \sim 1\,,
\qquad
c \sim 1\,,
\qquad
\phi_x \sim 1\,,\\
&
\|\partial_\theta k\|_{L^\infty} \lesssim \|k'_0\|_{L^\infty}\,,
\qquad
\|a\|_{L^\infty} \leq \|a_0\|_{L^\infty} +  \mathcal O(\varepsilon)\,,
\qquad
\|z\|_{L^\infty} \leq \|z_0\|_{L^\infty} + \mathcal O(\varepsilon)\,\,.
\end{align*}
\end{proposition}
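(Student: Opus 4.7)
The plan is to establish all six estimates simultaneously by a standard continuity/bootstrap argument on the interval $[-\varepsilon, \varepsilon \wedge T_*]$. Introduce bootstrap hypotheses that are slightly weaker than the stated conclusions: for example, assume there exist constants $0 < c_w < \min w_0 \le \max w_0 < C_w$, $0 < c_c < C_c$, $0 < c_\phi < C_\phi$ and constants $M_k, M_a, M_z$ (each slightly larger than the desired bound) such that on some maximal sub-interval $[-\varepsilon, T]$ the corresponding inequalities hold. Both hypotheses and conclusions are satisfied at $t=-\varepsilon$ by the assumptions in Section \ref{sec:assumptions} (in particular $w_0 \sim 1$, $c_0 = \tfrac{1}{2}(w_0 - z_0) \sim 1$ because of $\max z_0 < \min w_0$, and $\phi_x|_{t=-\varepsilon} = 1$). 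Local well-posedness in $H^6$ makes the set of times where the bootstrap assumptions hold both open and closed, so it suffices to show that the assumptions can be strictly improved on $[-\varepsilon, T]$.

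To improve, I would exploit the transport structure of each quantity along the appropriate characteristic family. First, $k$ is exactly transported along $\phi$, so $k \cir \phi \equiv k_0$, hence $\|k\|_{L^\infty} \le \|k_0\|_{L^\infty}$; differentiating the relation $k(\phi(x,t),t) = k_0(x)$ gives $\partial_\theta k \cir \phi = k_0'/\phi_x$, which together with the bootstrap $\phi_x \sim 1$ yields $\|\partial_\theta k\|_{L^\infty} \lesssim \|k_0'\|_{L^\infty}$. Next, along $\phi$ the equation \eqref{xland-a} reads $\partial_t (a\cir\phi) = -\tfrac43 a^2\cir\phi + \tfrac13(w+z)^2\cir\phi - \tfrac16(w-z)^2\cir\phi$; the right-hand side is bounded by a constant under the bootstrap, so integrating on $[-\varepsilon,t]$ with $t+\varepsilon \le 2\varepsilon$ gives $\|a\|_{L^\infty} \le \|a_0\|_{L^\infty} + \mathcal{O}(\varepsilon)$. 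Similarly, composing \eqref{xland-z} with $\psi$ yields $\partial_t(z\cir\psi) = -\tfrac83 (az)\cir\psi + \tfrac{1}{24}((w-z)^2\partial_\theta k)\cir\psi$; the right side is bounded under the bootstrap and the improved bound on $\partial_\theta k$, so integration gives $\|z\|_{L^\infty} \le \|z_0\|_{L^\infty} + \mathcal{O}(\varepsilon)$. The same argument applied to $w$ along $\eta$ through \eqref{xland-w} gives $|w\cir\eta - w_0| \le \mathcal{O}(\varepsilon)$, which preserves $w\sim 1$ with constants strictly inside $(c_w,C_w)$ once $\varepsilon$ is small.

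With the improved zeroth-order bounds in hand, the relation $c = \tfrac12(w-z)$ combined with $\max z_0 + \mathcal{O}(\varepsilon) < \min w_0 - \mathcal{O}(\varepsilon)$ (which follows from the strict separation hypothesis \eqref{ineq:c_0} once $\varepsilon$ is taken small) gives $c \sim 1$ with constants strictly inside $(c_c, C_c)$. Finally, one plugs these improved bounds into the explicit formula \eqref{phi_x:formula}: since $c_0 \sim 1$, $c\cir\phi \sim 1$, and $\int_{-\varepsilon}^{t} a\cir\phi = \mathcal{O}(\varepsilon)$, the exponential factor equals $1 + \mathcal{O}(\varepsilon)$ and the ratio $(c_0/c\cir\phi)^2$ is comparable to $1$, yielding $\phi_x \sim 1$ with constants strictly inside $(c_\phi, C_\phi)$. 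All bootstrap inequalities are thereby strictly improved, closing the argument.

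The main obstacle is the coupled nature of the bootstrap: the improvement of $\|\partial_\theta k\|_{L^\infty}$ requires $\phi_x \sim 1$, the improvement of $\phi_x \sim 1$ via \eqref{phi_x:formula} requires $c \sim 1$ and $\|a\|_{L^\infty} \lesssim 1$, and the improvement of $c \sim 1$ requires the improved bounds on $w$ and $z$. One must therefore verify that all improvements can be achieved simultaneously by choosing the bootstrap constants with enough slack and then taking $\varepsilon$ small depending on these constants; the strict separation $\max z_0 < \min w_0$ from the initial data assumptions is what makes this simultaneous closure possible, since it ensures $c$ cannot approach zero on a short time scale.
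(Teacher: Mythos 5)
Your proposal is correct and follows essentially the same route as the paper: a bootstrap on $[-\varepsilon,\varepsilon\wedge T_*]$ using the exact transport $k\cir\phi=k_0$ (hence $\phi_x\,\p_\theta k\cir\phi=k_0'$), the Duhamel/ODE formulas for $w\cir\eta$, $z\cir\psi$, $a\cir\phi$ over a time interval of length $\lesssim\varepsilon$, the strict separation \eqref{ineq:c_0} to keep $c=\tfrac12(w-z)\sim 1$, and the explicit formula \eqref{phi_x:formula} to close $\phi_x\sim 1$. Your write-up simply makes explicit the choice of slack constants and the openness/closedness argument that the paper leaves as "straightforward."
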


\begin{proof}[Proof of Proposition~\ref{prop:small:time}]This follows from an easy bootstrap argument. Let $t \leq \varepsilon \wedge T_*$. If we assume all of the listed bounds hold up to time $t$ for some constants, then it follows that \eqref{phi_x:formula} holds up to time $t$. $k$ satisfies $k\cir\phi = k_0$ and 
\begin{equation}
\label{k_x:formula}
\phi_x \p_\theta k\cir \phi = k'_0.
\end{equation}
Additionally, \eqref{eq:w:z:k:a} gives us Duhamel formulas for $w\cir\eta, z\cir \psi$, and $a\cir\phi$. Using these Duhamel formulas along with \eqref{ineq:c_0},\eqref{phi_x:formula}, \eqref{k_x:formula}, and the fact that $t \leq \varepsilon$ it is straightforward to improve our bounds for all times before $t$, provided the constants we assumed in our bootstrap hypothesis are appropriate and $\varepsilon$ is small enough.
\end{proof}

Using these estimates, it is easy to show that for $\varepsilon > 0$ sufficiently small we get
\begin{equation}
\label{I_t:bound}
\big\vert I_te^{-\frac{1}{8}k_0} -1\big\vert   \leq \mathcal O(\varepsilon + t) \hspace{10mm} \forall \: x \in \mathbb{T}, -\varepsilon \leq t \leq \varepsilon\wedge T_*.
\end{equation}

\subsection{ $\partial_\theta a$ bounds}
\label{sec:a_x}

Using \eqref{phi_x:formula} and \eqref{k_x:formula} we have
\begin{align}
\label{vorticity_t:identity}
\p_t(\varpi \cir\phi) & = \frac{8}{3} (a\varpi)\cir\phi + \frac{4}{3} e^{k_0} \p_\theta k \cir\phi  \\
& = \frac{8}{3} (a\varpi)\cir\phi + \frac{4}{3} e^{k_0}k'_0 \phi^{-1}_x \notag \\
& = \frac{8}{3} (a\varpi)\cir\phi + \frac{4}{3} \frac{k'_0}{c^2_0}e^{k_0} \I_t^2 (c\circ \phi)^2, \notag
\end{align}
where
\begin{equation}
\I_t : = e^{\frac{8}{3}\int^t_{-\varepsilon} a\circ\phi}.
\end{equation}
Therefore,
\begin{align}
\label{vorticity:duhamel}
 \varpi\cir\phi & = \varpi_0 \mathcal I_t + \frac{4}{3} c^{-2}_0 k'_0e^{k_0} \I_t \int^t_{-\varepsilon} \I_\tau (c^2 \circ \phi) \: d\tau.
\end{align}
Note that
\begin{equation}
\label{phi_x:formula:2}
\phi_x = c_0^2  I_t^{-2} c^{-2}\cir\phi \mathcal.
\end{equation}
This relation will be useful for estimating the higher derivatives of $a$.

Since 
\begin{equation}
\label{vorticity_0:formula}
\varpi_0 = 4c^{-2}_0(w_0 + z_0 - a'_0) e^{k_0},
\end{equation}
our assumptions on our initial data let us conclude that $|\varpi_0| \lesssim 1$, and therefore for all $(\theta,t) \in \mathbb{T} \times [-\varepsilon, T]$ we have
\begin{equation}
|\varpi| \lesssim 1.
\end{equation}
Since
\begin{equation}
\label{a_x:formula}
\p_\theta a = w+ z-\frac{1}{4}c^2 \varpi e^{-k},
\end{equation}
 it follows that 
 \begin{equation}
 \label{a_x:bound}
 |\p_\theta a| \lesssim 1
 \end{equation}
 for all times $t \in [-\varepsilon, \varepsilon \wedge T_*]$.

Using \eqref{a_x:bound}, and the bounds on the initial data we conclude that 
\begin{equation}
\label{Phi:bound}
|\Phi| \lesssim \varepsilon^{-1},
\end{equation}
for all times $t \in [-\varepsilon, \varepsilon \wedge T_*]$.

\subsection{ $\partial_\theta z$ bounds}
\label{sec:z_x}

\begin{proposition}
\label{prop:z_x}
For all $(x,t) \in \mathbb{T} \times [-\varepsilon, \varepsilon \wedge T_*]$ we have
\begin{align*}
 \eta_x |q^w\cir\eta| \leq 2\varepsilon^{-1}\,,
 \qquad
 \eta_x \leq 4\,,
 \qquad
 |q^z\cir\psi | \lesssim \|z'_0\|_{L^\infty}\,,
 \qquad
 \psi_x \sim 1\,.
\end{align*}
\end{proposition}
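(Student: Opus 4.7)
The plan is to run a simultaneous bootstrap for all four bounds, leveraging the results already built in \S\ref{sec:estimates1} (Proposition~\ref{prop:small:time}, the bound $|\p_\theta a|\lesssim 1$, and Lemma~\ref{lem:integralbound}). Note that the first two inequalities, $\eta_x|q^w\cir\eta|\le 2\eps^{-1}$ and $\eta_x\le 4$, are essentially proved within the argument for Lemma~\ref{lem:integralbound}, once one knows $|\p_\theta z|\lesssim\eps^{\beta_1}$. So the real content of the proposition is the third bound $|q^z\cir\psi|\lesssim\|z_0'\|_{L^\infty}$, with $\psi_x\sim 1$ coming along for free.

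\smallskip

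First I would set up the bootstrap: suppose that on some maximal sub-interval $[-\eps,T]\subset[-\eps,\eps\wedge T_*]$ we have $|q^z\cir\psi|\le M\|z_0'\|_{L^\infty}+M\eps^{\mu}$ for a suitably large constant $M$. Since $|\p_\theta k|\lesssim\eps^{\gamma_1}\lesssim\eps^{\mu}$ by Proposition~\ref{prop:small:time} and $c\sim 1$, the identity $q^z=\p_\theta z+\tfrac14 c\,\p_\theta k$ gives $|\p_\theta z\cir\psi|\lesssim\eps^{\beta_1}$. Combined with the zeroth-order bounds from Proposition~\ref{prop:small:time}, this puts us in the setting of Lemma~\ref{lem:integralbound}, which yields the first two bounds of the proposition exactly as in its proof (those are the intermediate estimates in that argument).

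\smallskip

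Next I would bound $\psi_x$ using \eqref{psi_x:formula}: since $c_0,c\cir\psi\sim 1$, $|a|\lesssim 1$, and $|\p_\theta z|\lesssim\eps^{\beta_1}$ with $\beta_1>-1$, the exponent $\int_{-\eps}^t(\p_\theta z-\tfrac43 a)\cir\psi$ is of size $\mathcal O(\eps^{1+\beta_1}+\eps)=\mathcal O(\eps^{\mu})$ for $t\le\eps$, so $\psi_x=1+\mathcal O(\eps^\mu)\sim 1$. Then to close the bootstrap, I would use the Duhamel formula \eqref{q^z:duhamel}. Its three contributions are treated as follows. The initial-data term is bounded by $(\|z_0'\|_{L^\infty}+\tfrac14\|c_0\|\|k_0'\|_{L^\infty})(1+\mathcal O(\eps))\le\|z_0'\|_{L^\infty}+\mathcal O(\eps^{\gamma_1})$. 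The third term, involving $(\p_\theta a\,z)\cir\psi$, is bounded by $\mathcal O(\eps)$ using $|\p_\theta a|\lesssim 1$ and $|z|\lesssim 1$ from \S\ref{sec:a_x} and Proposition~\ref{prop:small:time}. The second term, involving $(c\,\p_\theta k\, q^w)\cir\psi$, is the only delicate one and is treated next.

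\smallskip

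The key step: writing $q^w=\p_\theta w-\tfrac14 c\,\p_\theta k$, we split $\int_{-\eps}^t\psi_x(c\,\p_\theta k\, q^w)\cir\psi\,d\tau$ into a part with $\p_\theta w\cir\psi$ and a trivially small remainder. For the main piece I would apply Lemma~\ref{lem:integralbound} with $\varphi=\tfrac23$ (so that $\Psi=\psi$) to obtain $\int_{-\eps}^t|\p_\theta w\cir\psi|\,d\tau\lesssim(\eps+t)/\eps\lesssim 1$; combined with $|c\,\p_\theta k|\lesssim\eps^{\gamma_1}$ and $\psi_x\sim 1$, this bounds the whole term by $\mathcal O(\eps^{\gamma_1})=\mathcal O(\eps^\mu)$. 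Dividing by $\psi_x\sim 1$ gives $|q^z\cir\psi|\le\|z_0'\|_{L^\infty}+C\eps^\mu$, which, for $M$ chosen large and $\eps$ small, strictly improves the bootstrap hypothesis; since the constant $M$ is absorbed into $\lesssim$, this yields $|q^z\cir\psi|\lesssim\|z_0'\|_{L^\infty}+\eps^\mu\lesssim\|z_0'\|_{L^\infty}$ in the sense of $\lesssim$. The main obstacle is precisely this coupling of $q^z$ to $q^w$ through the entropy gradient; it is handled because (a) $\p_\theta k$ is small ($\lesssim\eps^{\gamma_1}$) and (b) the $L^1_t$ norm of $q^w$ along any characteristic transverse to $\eta$ is $\mathcal O(1)$ rather than $\mathcal O(\eps^{-1})$, as guaranteed by Lemma~\ref{lem:integralbound}.
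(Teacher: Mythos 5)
Your proposal is correct and follows essentially the same route as the paper: a bootstrap on $q^z$, using the zeroth-order and $\p_\theta a$ estimates together with Lemma~\ref{lem:integralbound} (and the intermediate bounds $\eta_x\le 4$, $\eta_x|q^w\cir\eta|\le 2\eps^{-1}$ from its proof), then closing via the Duhamel formula \eqref{q^z:duhamel} with the entropy-coupling term controlled by the $L^1_t$ transversal bound on $q^w\cir\psi$ (equivalently, on $\p_\theta w\cir\psi$ after splitting off $\tfrac14 c\,\p_\theta k$) and $|\p_\theta k|\lesssim\eps^{\gamma_1}$. The only differences are cosmetic (the extra $+M\eps^\mu$ in your bootstrap hypothesis and applying Lemma~\ref{lem:integralbound} as stated rather than its intermediate estimate), so no changes are needed.
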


\begin{proof}[Proof of ~\ref{prop:z_x}]
We will use a bootstrap argument. Let $T \in [-\varepsilon, \varepsilon \wedge T_*)$ and let our bootstrap assumption be that
\begin{align*}
|q^z| & \leq C\|z_0'\|_{L^\infty}
\end{align*}
for all $(\theta,t) \in \mathbb{T} \times [-\varepsilon, T]$ and a constant $C$ to be determined. Since we are assuming $|\p_\theta z| \lesssim \varepsilon^{\beta_1}$ for times $t \in [-\varepsilon, T]$, it follows from \eqref{psi_x:formula} and our estimates from \S~\ref{sec:0DE} that $\psi_x \sim 1$ with constants independent of $C$ for times $t \in [-\varepsilon, T]$, provided that $\varepsilon$ is small enough relative to $C$.

Using our bootstrap assumption, along with the estimates from \S~\ref{sec:0DE}, \ref{sec:a_x}, we can conclude (see Lemma \ref{lem:integralbound} and its proof) that for all $(x,t) \in \mathbb{T} \times [-\varepsilon, T]$, we have
\begin{align*}
\eta_x & \leq 4, \\
\eta_x |q^w\cir\eta| & \leq 2\varepsilon^{-1}, \\
\int^t_{-\varepsilon} |q^w\cir\psi(x,\tau)| \: d\tau  & \lesssim 1.
\end{align*}
Using this last estimate along with the estimates from \S~\ref{sec:0DE}, \ref{sec:a_x}, it follows from \eqref{q^z:duhamel} and the fact that $\psi_x \sim 1$ that
$$ |q^z\cir\psi \psi_x - z'_0 | \lesssim \varepsilon^{\gamma_1} $$
for times $t \in [-\varepsilon, T]$. It follows that
$$ |q^z\cir\psi| \leq \|\psi^{-1}_x \|_{L^\infty(\mathbb{T} \times [-\varepsilon, T])} \big( \|z'_0\|_{L^\infty} + \mathcal O(\varepsilon^{\gamma_1}) \: \big) $$
for $t \in [-\varepsilon, T]$. Since $\gamma_1 > 0 \geq \beta_1$, it follows that if we let $\varepsilon$ become small enough we get
$$ |q^z| \leq 2\|\psi^{-1}_x \|_{L^\infty(\mathbb{T} \times [-\varepsilon, T])}  \|z'_0\|_{L^\infty} $$
for all $t \in [-\varepsilon, T]$. If $C$ is chosen large enough and $\varepsilon$ is chosen small enough, this improves upon our second bootstrap assumption.
\end{proof}

It follows as an immediate corollary of this proposition that
\begin{align}
\label{z_x:bound}
|\p_\theta z| & \lesssim \varepsilon^{\beta_1}, \\
\eta_x & \lesssim 1, \label{eta_x:bound} \\
\psi_x & \sim 1, \label{psi_x:bound} 
\end{align}
for all times $t \in [-\varepsilon, \varepsilon \wedge T_*]$.

\subsection{Bounding $|T_*|$}
\label{sec:T_*}

Now our estimates will let us conclude that $\eta_x$ behaves roughly the same as it would if $w$ were the solution of Burger's equation with initial data $w_0$ and $\eta$ were the flow of $w$. Using Proposition ~\ref{prop:small:time}, \eqref{I_t:bound}, \eqref{a_x:bound}, \eqref{z_x:bound}, and \eqref{eta_x:bound} in equation \eqref{q^w:equation} gives us
\begin{equation*}
\eta_x q^w\cir\eta = \big( - \tfrac{1}{\varepsilon} + (w'_0 + \tfrac{1}{\varepsilon}) \big)I_t e^{-\frac{1}{8}{k_0}} + \mathcal O(\varepsilon^{\gamma_1})
\end{equation*}
for all times $t \in [-\varepsilon, \varepsilon \wedge T_*]$. Plugging this into \eqref{eta_x:equation} and using the same bounds produces
\begin{equation}
\label{eq:eta_x}
\eta_x = 1 + \big( - \tfrac{1}{\varepsilon} + (w'_0 + \tfrac{1}{\varepsilon}) \big)\int^t_{-\varepsilon} \! \! I_\tau e^{-\frac{1}{8}{k_0}} \: d\tau + \mathcal O(\varepsilon^{\beta_1+1})
\end{equation}
for $t \in [-\varepsilon, \varepsilon \wedge T_*]$. Evaluating \eqref{eq:eta_x} at $x=0$ and using \eqref{I_t:bound} gives
\begin{align}
\eta_x(0,t) & = 1- (\varepsilon+t)\varepsilon^{-1} + \mathcal O( \varepsilon^{\mu}) \notag \\
& = - \tfrac{t}{\varepsilon} + \mathcal O( \varepsilon^{\mu}). \notag 
\end{align}
Since this is true for all $t \in [-\varepsilon, \varepsilon \wedge T_*]$, it follows that we must have $T_* \lesssim \varepsilon^{1+\mu}$ if $\varepsilon$ is chosen small enough. Therefore, $T_* = \varepsilon \wedge T_*$, and everything we have proven for $t \in [-\varepsilon, \varepsilon \wedge T_*]$ is true for $t \in [-\varepsilon, T_*]$.

We can also prove a lower bound on $T_*$. Since $w'_0(x) + \tfrac{1}{\varepsilon} \geq 0$ for all $x$, it follows from \eqref{eq:eta_x} and \eqref{I_t:bound} that
\begin{align*}
\eta_x & \geq - \tfrac{t}{\varepsilon} + \mathcal O(\varepsilon^{\mu})
\end{align*}
everywhere. Therefore, $|T_*| \lesssim \varepsilon^{1+\mu}$, else $\p_\theta w, \p_\theta z, \p_\theta k$ and $\p_\theta a$ would all stay bounded up to $T_*$.

We can also get a lower bound for $\eta_x$ away from 0. Indeed, since $w'_0(x) +\frac{1}{\varepsilon} \geq  C \varepsilon^{\frac{\mu}{2}-1}$ for $|x| \geq \varepsilon^{3/2}$,  we have
\begin{align}
\label{eta_x:lowerbound}
\eta_x & \geq -\tfrac{t}{\varepsilon} + C\varepsilon^{\frac{\mu}{2}-1} (\varepsilon + t) + \mathcal O(\varepsilon^\mu) \notag \\
& = (T_*-t) [ \tfrac{t}{\varepsilon} - C\varepsilon^{\frac{\mu}{2}-1}] + C\varepsilon^{\frac{\mu}{2}} + \mathcal O(\varepsilon^\mu) \notag \\
& \geq C\varepsilon^{\frac{\mu}{2}} + \mathcal O(\varepsilon^\mu) \notag \\
& \gtrsim \varepsilon^{\frac{\mu}{2}}.
\end{align}

Using Lemma ~\ref{lem:integralbound} and the estimates proven in this section, we can now conclude that the bound \eqref{bd:int} holds for all $\varphi > 0, (x,t) \in \mathbb{T} \times [-\varepsilon, T_*].$ This fact will be used so frequently in the rest of the paper that we will not bother to cite it.


\section{Transversality}
\label{sec:transfunc}

Let $\varphi > 0$ and let $\Psi$ be the flow of $\lambda : = (1-\varphi) w + (\tfrac{1}{3} + \varphi) z$.
\begin{align*}
-\tfrac{1}{\varphi} \Psi_x \p_t (\p_\theta c\cir\Psi) & = \tfrac{1}{\varphi} \Psi_{xt} \p_\theta c\cir\Psi - \tfrac{1}{\varphi}\p_{tx}( c\cir\Psi) \\
& = \tfrac{1}{\varphi} \Psi_{xt} \p_\theta c\cir\Psi + \Psi_x (\p_\theta w + (\tfrac{2}{3}\tfrac{1}{\varphi}-1)\p_\theta z + \tfrac{8}{3} \tfrac{1}{\varphi} a) \cir \Psi (\p_\theta c\cir\Psi) \\
& + \Psi_x(c\p_\theta^2 w + (\tfrac{2}{3}\tfrac{1}{\varphi}-1)c\p_\theta^2 z + \tfrac{8}{3} \tfrac{1}{\varphi} \p_\theta ac) \cir \Psi. \\
\implies - \tfrac{1}{\varphi} \p_t \big( \Psi_x ( c^{-1} \p_\theta c)\cir\Psi \big) & = - \tfrac{1}{\varphi} \Psi_{tx} (c^{-1} \p_\theta c)\cir \Psi - \Psi_x (c^{-2} \p_\theta c)\cir \Psi \big( -\tfrac{1}{\varphi} \p_t (c\cir\Psi) \big) \\
& + (c^{-1}\cir\Psi) \big( - \tfrac{1}{\varphi} \Psi_x \p_t( \p_\theta c\cir\Psi) \big) \\
& = \Psi_x(\p_\theta^2 w + (\tfrac{2}{3}\tfrac{1}{\varphi}-1)\p_\theta^2 z + \tfrac{8}{3} \tfrac{1}{\varphi} \p_\theta a) \cir \Psi.
\end{align*}
Therefore, if $h: \mathbb{T} \times [-\varepsilon, T_*) \rightarrow \R$ is any differentiable function, we have
\begin{align*}
\Psi_x(h\p^2_\theta w)\cir\Psi & = -\tfrac{1}{\varphi} \p_t\big( \Psi_x (c^{-1} \p_\theta c h) \cir \Psi\big) - \Psi_x( (\tfrac{2}{3}\tfrac{1}{\varphi}-1)h\p_\theta^2 z + \tfrac{8}{3} \tfrac{1}{\varphi} \p_\theta ah) \cir \Psi \notag\\
&\qquad - \Psi_x(c^{-1} \p_\theta c)\cir\Psi\big( -\tfrac{1}{\varphi} \p_t( h\cir\Psi) \big).
\end{align*}

This gives us the following equation:
\begin{align}
\label{trans:equation}
\p_x\big( (h\p_\theta w)\cir\Psi \big) & =  -\tfrac{1}{\varphi} \p_t\big( \Psi_x (c^{-1} \p_\theta c h) \cir \Psi\big) \notag \\
& - \Psi_x( (\tfrac{2}{3}\tfrac{1}{\varphi}-1)h\p_\theta^2 z + \tfrac{8}{3} \tfrac{1}{\varphi} \p_\theta ah) \cir \Psi \notag \\
& + \Psi_x \big[ ( \p_\theta h \p_\theta w) \cir \Psi -(c^{-1} \p_\theta c)\cir\Psi\big( -\tfrac{1}{\varphi} \p_t( h\cir\Psi) \big) \big].
\end{align}

The last term in this expression motivates the following definition: 

\begin{definition}[Transversality]
A differentiable function $h: \mathbb{T} \times [-\varepsilon, T_*) \rightarrow \R$ is {\bf{transversal}} (or {\bf{1-transversal}}) if it is bounded and there exists a constant $\varphi > 0$ and bounded functions $A,B, C$ such that
$$ \begin{cases} \p_\theta h = A c^{-1} \p_\theta c + B \\ \p_t h + \lambda  \p_\theta h = - \varphi A \p_\theta w -\varphi C \end{cases} $$
Here $\lambda = (1-\varphi) w + (\tfrac{1}{3} + \varphi) z$, as in the above discussion. If in addition $A,B,C$ are themselves transversal functions, we say that $h$ is {\bf{2-transversal}}. We recursively define $h$ to be {\bf{$n$-transversal}} if $A,B,$ and $C$ are $(n-1)$-transversal.
\end{definition}

A few remarks about transversal functions:
\begin{itemize}
\item If $h$ satisfies the transversality condition for one $\varphi_0 > 0$, then it satisfies the transversality condition for all $\varphi > 0$. If indeed, if we have
$$ \begin{cases} \p_\theta h = A c^{-1} \p_\theta c + B \\ \p_t h + \lambda_0  \p_\theta h = - \varphi_0 A \p_\theta w -\varphi_0 C \end{cases} $$
for some $\varphi_0 > 0$ then for any other $\varphi>0$ we have
$$ \begin{cases} \p_\theta h = A c^{-1} \p_\theta c + B \\ \p_t h + \lambda  \p_\theta h = - \varphi A \p_\theta w + (\varphi-\varphi_0) A \p_\theta z + 2(\varphi_0-\varphi) cB -\varphi_0 C  \end{cases} .$$
Since $\p_\theta z$ is bounded, $h$ still satisfies the transversality condition for $\varphi$, albeit with a different choice of bounded function $C$. So the notion of a transversal function is independent of our choice of $\varphi > 0$.
\item Note that while being transversal does not depend on the choice of $\varphi$ (as the previous bullet illustrated), and $A$ and $B$ are independent of $\varphi$, the function $C$ changes based on $\varphi$.
\item If $h$ is a bounded function with bounded derivatives, then $h$ is trivially transversal, with $A = 0, B = \p_\theta h$ and $C = -\tfrac{1}{\varphi} (\p_t h + \lambda \p_\theta h)$.
\item If functions $h_1, h_2$ are $n$-transversal, then $h_1+h_2$ is $n$-transversal. Indeed, we have
$$ \begin{cases} \p_\theta(h_1+h_2) = (A_1+A_2) c^{-1} \p_\theta c +B_1+B_2 \\ (\p_t + \lambda \p_\theta ) (h_1+h_2) = -\varphi (A_1+A_2) \p_\theta w -\varphi(C_1+C_2) \end{cases} . $$
\item If functions $h_1, h_2$ are $n$-transversal, then their product is $n$-transversal. Indeed, we have
$$ \begin{cases} \p_\theta( h_1h_2) = (A_1h_2+A_2h_1) c^{-1} \p_\theta c + B_1h_2 + B_2h_1 \\ (\p_t +\lambda  \p_\theta)( h_1h_2) = - \varphi (A_1h_2+A_2h_1) \p_\theta w -\varphi( C_1h_2+ C_2h_1) \end{cases}. $$
\item If $h$ is $n$-transversal and $h \sim 1$ then $h^{-1}$ is also $n$-transversal. Indeed,
$$ \begin{cases} \p_\theta (h^{-1}) = -h^{-2}A c^{-1} \p_\theta c - h^{-2}B \\ \p_t (h^{-1}) + \lambda  \p_\theta (h^{-1}) = - \varphi (-h^{-2}A) \p_\theta w -\varphi(- h^{-2}C) \end{cases} .$$
\item If $F: \R \rightarrow \R$ is smooth and $h$ is $n$-transversal, then $F \cir h$ is $n$-transversal. Indeed, we have
$$ \begin{cases} \p_\theta (F\cir h) = (A F'\cir h) c^{-1} \p_\theta c + B F'\cir h \\ (\p_t + \lambda \p_\theta)(F\cir h) = - \varphi (A F'\cir h) \p_\theta w - \varphi C F'\cir h \end{cases}. $$
This rule will be especially useful for $F(x) = e^x$.
\item $c$ is transversal with $A = c, B = 0,$ and $C = 4ac$ when $\varphi = \frac{2}{3}$. It follows inductively that if $a$ is $n$-transversal, then $c$ is $(n+1)$-transversal. At this point, we already know that $a$ is at least 1-transversal because it is uniformly $C^1$, so $c$ is currently proven to be at least 2-transversal. $c \sim 1$, so $c^{-1}$ is also 2-transversal. The fact that both $c$ and $c^{-1}$ are transversal was the main ingredient used in the computation of \eqref{trans:equation}.
\end{itemize}

The following lemma will be used in \S~\ref{sec:z_xxx}, \S~\ref{sec:z_xxxx}, and \S~\ref{sec:z_xxxxx}.

\begin{lemma}[Identities for transversal functions along 1-characteristics]
\label{lem:trans:1.1}
If $h: \mathbb{T} \times [-\varepsilon, T_*) \rightarrow \R$ is transversal with
$$ \begin{cases} \p_\theta h = A c^{-1} \p_\theta c + B \\ -\tfrac{3}{2}\p_t(h\cir\psi) = (A\p_\theta w + C)\cir\psi \end{cases} $$
then we have
\begin{equation}
\label{eq:trans:1.1.1}
\p_x\big( (h\p_\theta w)\cir\psi\big)  = -\tfrac{3}{2} \p_t \big(\psi_x ( c^{-1} \p_\theta c h)\cir\psi\big) + \psi_x\big( [B-\tfrac{1}{2}c^{-1} C] \p_\theta w\big) \cir\psi + \psi_x ( \tfrac{1}{2} c^{-1} C \p_\theta z- 4 \p_\theta a h)\cir\psi.
\end{equation}
and
\begin{align}
\label{eq:trans:1.1.2}
\p_x\big( \psi_x(h\p_\theta w)\cir\psi\big) &  = -\tfrac{3}{2} \p_t \big(\psi_x^2 ( c^{-1} \p_\theta c h)\cir\psi\big) + \psi^2_x\big( [Qh + B-\tfrac{1}{2}c^{-1} C+ \tfrac{3}{4} c^{-1} h \p_\theta z] \p_\theta w\big) \cir\psi  \notag \\
& + \psi^2_x ( \tfrac{1}{2} c^{-1} C \p_\theta z- 4 \p_\theta a h - \tfrac{3}{4} c^{-1} h \p_\theta z^2)\cir\psi.
\end{align}
From theses two equations we get the bounds
\begin{equation}
\label{bd:trans:1.1.1}
\big\vert \p_x\big( (h\p_\theta w)\cir\psi\big)  + \tfrac{3}{2} \p_t \big(\psi_x ( c^{-1} \p_\theta c h)\cir\psi\big) \big\vert  \lesssim \|h\|_{L^\infty} + \varepsilon^{\beta_1} \|C\|_{L^\infty} + ( \|B\|_{L^\infty} + \|C\|_{L^\infty}) |\p_\theta w\cir\psi|.
\end{equation}
and
\begin{align}
\label{bd:trans:1.1.2}
\big\vert \p_x\big( \psi_x(h\p_\theta w)\cir\psi\big)  + \tfrac{3}{2} \p_t \big(\psi_x^2 ( c^{-1} \p_\theta c h)\cir\psi\big) \big\vert  & \lesssim \varepsilon^{2\beta_1} \|h\|_{L^\infty} + \varepsilon^{\beta_1} \|C\|_{L^\infty}  \notag \\
& + ( \varepsilon^{-1} \|h\|_{L^\infty} + \|B\|_{L^\infty} + \|C\|_{L^\infty}) |\p_\theta w\cir\psi|.
\end{align}
\end{lemma}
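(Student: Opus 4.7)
For \eqref{eq:trans:1.1.1}, the plan is to specialize the general formula \eqref{trans:equation} to $\Psi=\psi$ with $\varphi=\tfrac{2}{3}$. With this choice $\lambda=\lambda_1$, $\tfrac{1}{\varphi}=\tfrac{3}{2}$, the coefficient $\tfrac{2}{3\varphi}-1=0$ (so the $\p_\theta^2 z$ term in \eqref{trans:equation} drops out), and the coefficient in front of $\p_\theta a$ becomes $4$. I will then substitute the transversality hypotheses $\p_\theta h = A c^{-1}\p_\theta c + B$ and $-\tfrac{3}{2}\p_t(h\cir\psi)=(A\p_\theta w+C)\cir\psi$ into the remaining bracketed term; the $A c^{-1}\p_\theta c\,\p_\theta w$ contributions cancel, leaving $B\p_\theta w - c^{-1}\p_\theta c\cdot C$. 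Finally, using $c=\tfrac{1}{2}(w-z)$ to write $c^{-1}\p_\theta c = \tfrac{1}{2}c^{-1}(\p_\theta w - \p_\theta z)$, I split $-c^{-1}\p_\theta c\cdot C$ into $-\tfrac{1}{2}c^{-1}C\p_\theta w + \tfrac{1}{2}c^{-1}C\p_\theta z$, which yields \eqref{eq:trans:1.1.1}.

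For \eqref{eq:trans:1.1.2}, I will multiply \eqref{eq:trans:1.1.1} through by $\psi_x$ and reorganize using the two product-rule identities $\psi_x\p_x(F\cir\psi) = \p_x(\psi_x F\cir\psi) - \psi_{xx} F\cir\psi$ and $\psi_x\p_t(\psi_x G) = \p_t(\psi_x^2 G) - \psi_x\psi_{xt} G$. The commutator $\psi_{xt}$ is obtained from $\psi_t = \lambda_1\cir\psi$ as $\psi_{xt} = \psi_x(\tfrac{1}{3}\p_\theta w + \p_\theta z)\cir\psi$, and for $\psi_{xx}$ I use the formula $\psi_{xx} = \psi_x^2(Q_1 - \tfrac{1}{2}c^{-1}\p_\theta c)\cir\psi$ from \S~\ref{sec:char}. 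After expanding $c^{-1}\p_\theta c = \tfrac{1}{2}c^{-1}(\p_\theta w - \p_\theta z)$ everywhere, the $(\p_\theta w)^2$ contributions arising from $\psi_{xx}(h\p_\theta w)$ and from the $\psi_x\psi_{xt}$ commutator are $-\tfrac{1}{4}c^{-1}h(\p_\theta w)^2$ and $+\tfrac{1}{4}c^{-1}h(\p_\theta w)^2$ respectively, and will cancel exactly. The surviving $\p_\theta w\,\p_\theta z$ contributions will combine to produce the $\tfrac{3}{4}c^{-1}h\p_\theta z$ correction to the coefficient of $\p_\theta w$, and the $(\p_\theta z)^2$ contribution will appear as $-\tfrac{3}{4}c^{-1}h(\p_\theta z)^2$ in the residual; setting $Q:=Q_1$ then recovers \eqref{eq:trans:1.1.2}.

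The bounds \eqref{bd:trans:1.1.1} and \eqref{bd:trans:1.1.2} will then follow term-by-term from the pointwise estimates already established in \S~\ref{sec:estimates1}: $c\sim 1$, $\psi_x\sim 1$, $|\p_\theta a|\lesssim 1$, and $|\p_\theta z|\lesssim\eps^{\beta_1}$, together with $|Q_1|\lesssim\eps^{-1}$ coming from the $\tfrac{1}{2}c_0'/c_0$ piece of $\psi_{xx}/\psi_x^2$. I will group the terms containing $\p_\theta w$ separately because $|\p_\theta w\cir\psi|$ can be as large as $\OO(\eps^{-1})$; every other quantity is controlled by $\|h\|_{L^\infty}$, $\|B\|_{L^\infty}$, or $\|C\|_{L^\infty}$, with a gain of $\eps^{\beta_1}$ wherever a single $\p_\theta z$ factor appears and $\eps^{2\beta_1}$ for the $(\p_\theta z)^2$ residual.

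The hard part will be the bookkeeping in \eqref{eq:trans:1.1.2}: three distinct quadratic contributions in $(\p_\theta w,\p_\theta z)$ arise -- one from $\psi_{xx}(h\p_\theta w)$, one from the $\psi_x\psi_{xt}$ commutator, and one from the $c^{-1}\p_\theta c\cdot C$ piece already present at the level of \eqref{eq:trans:1.1.1} -- and I must verify not only that their $(\p_\theta w)^2$ coefficients exactly cancel but also that the surviving $\p_\theta w\,\p_\theta z$ and $(\p_\theta z)^2$ terms combine with the precise coefficient $\tfrac{3}{4}$ recorded in the statement.
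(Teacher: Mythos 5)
Your proposal is correct and follows essentially the same route as the paper: \eqref{eq:trans:1.1.1} is exactly the specialization of \eqref{trans:equation} to $\varphi=\tfrac23$ with the transversality hypotheses substituted, and \eqref{eq:trans:1.1.2} is obtained, as in the paper, by writing $\p_x(\psi_x(h\p_\theta w)\cir\psi)=\psi_{xx}(h\p_\theta w)\cir\psi+\psi_x\p_x((h\p_\theta w)\cir\psi)$, commuting $\psi_x$ past $\p_t$ via $\psi_{xt}=\psi_x(\tfrac13\p_\theta w+\p_\theta z)\cir\psi$, and using $\psi_{xx}=\psi_x^2(Q_1-\tfrac12 c^{-1}\p_\theta c)\cir\psi$, with your verified cancellation of the $\pm\tfrac14 c^{-1}h(\p_\theta w)^2$ terms and the surviving $\tfrac34$ coefficients matching the statement. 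The bounds then follow from the first-order estimates exactly as you indicate.
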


\begin{proof}[Proof of Lemma~\ref{lem:trans:1.1}]
\eqref{eq:trans:1.1.1} follows immediately from \eqref{trans:equation}. To prove \eqref{eq:trans:1.1.2}, 
\begin{align*}
 \p_x(\psi(h\p_\theta w)\cir\psi) & = \psi_{xx} (h\p_\theta w)\cir\psi + \psi_x \p_x\big( (h\p_\theta w)\cir\psi\big)  \\
 & =  \psi^2_x ( [Q - \tfrac{1}{2} c^{-1} \p_\theta c] h \p_\theta w)\cir\psi - \p_t \big( \psi^2_x (c^{-1} \p_\theta c h)\cir\psi\big) + \tfrac{3}{2} \psi_{xt} \psi_x (c^{-1} \p_\theta c h)\cir\psi  \\
 & + \psi_x\big( [B-\tfrac{1}{2}c^{-1} C] \p_\theta w\big) \cir\psi + \psi_x ( \tfrac{1}{2} c^{-1} C \p_\theta z- 4 \p_\theta a h)\cir\psi \\
 & = -\tfrac{3}{2} \p_t \big(\psi_x^2 ( c^{-1} \p_\theta c h)\cir\psi\big) + \psi^2_x\big( [Qh + B-\tfrac{1}{2}c^{-1} C+ \tfrac{3}{4} c^{-1} h \p_\theta z] \p_\theta w\big) \cir\psi  \notag \\
& + \psi^2_x ( \tfrac{1}{2} c^{-1} C \p_\theta z- 4 \p_\theta a h - \tfrac{3}{4} c^{-1} h \p_\theta z^2)\cir\psi.
\end{align*}
The inequalities follow immediately from the equations and the first order estimates.
\end{proof}

The following lemma will be used in \S~\ref{sec:a_xxx}, \S ~ \ref{sec:a_xxxx}, and \S~\ref{sec:a_xxxxx}.

\begin{lemma}[Identities for transversal functions along 2-characteristics]
\label{lem:trans:2.1}
If $h: \mathbb{T} \times [-\varepsilon, T_*) \rightarrow \R$ is a differentiable function satisfying the transversality condition
$$ \begin{cases} \p_\theta h = A c^{-1} \p_\theta c + B \\ -3\p_t(h\cir\phi) = (A\p_\theta w + C)\cir\phi \end{cases} $$
then we have
\begin{equation}
\label{eq:trans:2.1.1}
\p_x \big( (h \p_\theta w) \cir\phi) = -3\p_t \big( \phi_x (c^{-1} \p_\theta c h)\cir\phi\big) + \phi_x (B \p_\theta w - C c^{-1} \p_\theta c - h\p^2_\theta z - 8 \p_\theta a h) \cir\phi.
\end{equation}
and
\begin{align}
\label{eq:trans:2.1.2}
\p_x\big(\phi^2_x(h\p_\theta w)\cir\phi\big) & = -3\p_t \big( \phi^3_x ( c^{-1} \p_\theta c \p_\theta w)\cir\phi \big) \notag \\
& + \phi^3_x(B\p_\theta w - Cc^{-1} \p_\theta c + 4 c^{-1} \p_\theta c h \p_\theta z - h \p^2_\theta z - 8 \p_\theta a h)\cir\phi \notag \\
& + 2\phi^2_x \Phi (h\p_\theta w)\cir\phi.
\end{align}
\end{lemma}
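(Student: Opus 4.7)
\emph{Plan of proof.} The first identity is obtained by specializing the master identity \eqref{trans:equation} to $\varphi=\tfrac13$, which corresponds to $\Psi=\phi$ since $\lambda=(1-\tfrac13)w+(\tfrac13+\tfrac13)z=\tfrac23(w+z)=\lambda_2$. With $\varphi=\tfrac13$ the prefactors in \eqref{trans:equation} become $1/\varphi=3$, $\tfrac23\cdot 3-1=1$ and $\tfrac83\cdot 3=8$. Writing the last bracket of \eqref{trans:equation} as $\phi_x\bigl[(\p_\theta h\,\p_\theta w)\cir\phi-(c^{-1}\p_\theta c)\cir\phi\cdot(-3\p_t(h\cir\phi))\bigr]$ and substituting the two transversality relations $\p_\theta h=Ac^{-1}\p_\theta c+B$ and $-3\p_t(h\cir\phi)=(A\p_\theta w+C)\cir\phi$, the two $Ac^{-1}\p_\theta c\,\p_\theta w$ contributions cancel exactly, leaving $\phi_x(B\p_\theta w-Cc^{-1}\p_\theta c)\cir\phi$. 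This cancellation is the raison d'\^etre of the transversality condition, and collecting the remaining terms gives \eqref{eq:trans:2.1.1}.

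For the second identity I would apply the product rule,
\[
\p_x\bigl(\phi_x^2(h\p_\theta w)\cir\phi\bigr)=2\phi_x\phi_{xx}(h\p_\theta w)\cir\phi+\phi_x^2\,\p_x\bigl((h\p_\theta w)\cir\phi\bigr),
\]
and then substitute \eqref{eq:trans:2.1.1} into the second term while using \eqref{phi_xx:formula:2} in the form $\phi_{xx}=\phi_x\Phi-2\phi_x^2(c^{-1}\p_\theta c)\cir\phi$ in the first. The $\phi_x\Phi$ part of $\phi_{xx}$ produces the free term $2\phi_x^2\Phi(h\p_\theta w)\cir\phi$ that appears explicitly in \eqref{eq:trans:2.1.2}, while the $-2\phi_x^2(c^{-1}\p_\theta c)\cir\phi$ part of $\phi_{xx}$ contributes a cubic-in-$\phi_x$ term $-4\phi_x^3(c^{-1}\p_\theta c\,h\p_\theta w)\cir\phi$. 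To recast $\phi_x^2\p_t\bigl(\phi_x(c^{-1}\p_\theta c h)\cir\phi\bigr)$ as a pure time derivative plus corrections, I would use $\p_t(\phi_x^3F)=3\phi_x^2\phi_{xt}F+\phi_x^3\p_tF$, together with the evolution $\phi_{xt}=\phi_x(\p_\theta\lambda_2)\cir\phi=\tfrac23\phi_x(\p_\theta w+\p_\theta z)\cir\phi$. The $\p_\theta w$ piece generated by $\phi_{xt}$ combines with the cubic-in-$\phi_x$ term above and with the $B\p_\theta w-Cc^{-1}\p_\theta c$ contribution from \eqref{eq:trans:2.1.1}, while the $\p_\theta z$ piece yields the $4c^{-1}\p_\theta c\,h\p_\theta z$ remainder visible on the right-hand side of \eqref{eq:trans:2.1.2}.

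The main obstacle is the bookkeeping: one must verify that every occurrence of $\p_\theta w$ appearing outside the distinguished factor $h\p_\theta w$ ultimately cancels, for otherwise the identity would lose a derivative on $w$ (precisely the danger that the transversality condition is designed to defuse). This cancellation involves simultaneous contributions from the chain rule applied to $h\cir\phi$, from the second derivative $\phi_{xx}$ through \eqref{phi_xx:formula:2}, and from the commutation of $\p_t$ with the multiplicative $\phi_x$-factor. Once tracked carefully, the output expresses $\p_x\bigl(\phi_x^2(h\p_\theta w)\cir\phi\bigr)$ as a pure $\p_t$-derivative plus terms that are either bounded, first-order in $\p_\theta w$, or multiplied by the small factor $\varepsilon^{\beta_1}$ provided by $\p_\theta z$; this is exactly the structure needed to feed into Lemma \ref{lem:integralbound} when closing the higher-derivative estimates for $a$ in \S\ref{sec:a_xxx}--\S\ref{sec:a_xxxxx}.
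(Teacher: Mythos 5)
Your proof is correct and follows essentially the same route as the paper: \eqref{eq:trans:2.1.1} is just \eqref{trans:equation} specialized to $\varphi=\tfrac13$ (so $\Psi=\phi$, with prefactors $3$, $1$, $8$ and the exact cancellation of the $Ac^{-1}\p_\theta c\,\p_\theta w$ terms), and \eqref{eq:trans:2.1.2} is the product rule combined with \eqref{phi_xx:formula} and the commutation $\phi_x^2\p_t(\phi_x F)=\p_t(\phi_x^3F)-2\phi_x^2\phi_{xt}F$ with $\phi_{xt}=\tfrac23\phi_x(\p_\theta w+\p_\theta z)\cir\phi$, which is precisely the ``easy computation'' the paper's proof alludes to; your coefficient bookkeeping ($6\cdot\tfrac23=4$ cancelling the $-4$ from $\phi_{xx}$, leaving the $4c^{-1}\p_\theta c\,h\,\p_\theta z$ term) checks out. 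Note that your computation (correctly) yields $-3\p_t\big(\phi_x^3(c^{-1}\p_\theta c\,h)\cir\phi\big)$ as the leading term, so the factor $\p_\theta w$ printed in the first line of \eqref{eq:trans:2.1.2} should be read as $h$, consistent with the analogous identity \eqref{eq:trans:1.1.2}.
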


\begin{proof}[Proof of Lemma~\ref{lem:trans:2.1}]
\eqref{eq:trans:2.1.1} follows immediately from \eqref{trans:equation}. The proof of \eqref{eq:trans:2.1.2} is an easy computation using \eqref{trans:equation} and  \eqref{phi_xx:formula}.
\end{proof}

The following lemma will first be used in \S~\ref{sec:z_xxxx}, so there is no circularity in its proof. See \S~\ref{sec:k_xx} for the definition of $\mathcal E$ and \S~\ref{sec:z_xx} for the definition of $f$.

\begin{lemma}[Identities for 2-transversal functions along 1-characteristics]
\label{lem:trans:1.2}
If $h: \mathbb{T} \times [-\varepsilon, T_*) \rightarrow \R$ is 2-transversal with
\begin{align*}
\p_\theta h & = A c^{-1} \p_\theta c + B \\ 
-\tfrac{3}{2}\p_t(h\cir\psi) & = (A\p_\theta w + C)\cir\psi \\
\p_\theta A & = A_A c^{-1} \p_\theta c + B_A \\ 
-\tfrac{3}{2}\p_t(A\cir\psi) & = (A_A\p_\theta w + C_A)\cir\psi \\
\p_\theta B & = A_B c^{-1} \p_\theta c + B_B \\ 
-\tfrac{3}{2}\p_t(B\cir\psi) & = (A_B\p_\theta w + C_B)\cir\psi \\
\p_\theta C & = A_C c^{-1} \p_\theta c + B_C \\ 
-\tfrac{3}{2}\p_t(C\cir\psi) & = (A_C\p_\theta w + C_C)\cir\psi
\end{align*}
then we have
\begin{align}
\label{eq:trans:1.2.1}
\p^2_x\big( (h\p_\theta w)\cir\psi\big) & = - \tfrac{3}{2} \p_t\big( \psi^2_x( c^{-1} \p^2_\theta c h+  \big[-\tfrac{3}{2} h + A \big] c^{-2} \p_\theta c^2 + \big[ Q_1 + 2B - \tfrac{1}{2}c^{-1}C\big] c^{-1} \p_\theta c)\cir\psi \big) \notag \\
& + \psi^2_x\big( \big[ BQ_1 +B_B -\tfrac{1}{2} c^{-1} B_C \big] \p_\theta w \big) \cir\psi \notag \\
& - \psi^2_x \big( \big[ Q_1C + C_B -\tfrac{1}{2} c^{-1} C_C + 2  a c^{-1} C + \tfrac{1}{2} c^{-1} \p_\theta z C - 2 \p_\theta a h + 4 \p_\theta a A \big] c^{-1} \p_\theta c\big) \cir\psi \notag \\
& - \psi^2_x\big( \big[ \tfrac{1}{4} c^{-1} \p_\theta z C - \tfrac{1}{2} A_C c^{-1} \p_\theta z + \tfrac{1}{4} C \p_\theta k\big] c^{-1} \p_\theta c \big) \cir\psi \notag \\
& + \psi^2_x(\tfrac{1}{2} f - 4 \p_\theta a h Q_1 - 4 \p^2_\theta a h - 4 \p_\theta a B - \tfrac{1}{8} c \mathcal E + \tfrac{1}{2} B_C c^{-1} \p_\theta z)\cir\psi
\end{align}
and
\begin{align}
\label{eq:trans:1.2.2}
\p^2_x(h\cir\psi) & = -\tfrac{3}{4}\p_t \big( \psi^2_x(Ac^{-2} \p_\theta c)\cir\psi \big) \notag \\
& + \psi^2_x \big( [\tfrac{1}{2}Ac^{-1} Q_1 + \tfrac{1}{2} B_A c^{-1} - \tfrac{1}{4} C_A c^{-2} + Aac^{-2} + \tfrac{3}{8} Ac^{-2} \p_\theta z] \p_\theta w\big)\cir\psi \notag \\
& + \psi^2_x\big( [\tfrac{1}{4} Ac^{-1}\p_\theta z- \tfrac{1}{2} B+ A_B - \tfrac{1}{2} A_A c^{-1} \p_\theta + \tfrac{1}{4} Ac^{-1} \p_\theta z + \tfrac{1}{4} A\p_\theta k] c^{-1} \p_\theta c\big) \cir\psi \notag \\
& + \psi^2_x( \tfrac{1}{4} C_A c^{-2} \p_\theta z - A a c^{-2} \p_\theta z -2 \p_\theta a A c^{-1} - \tfrac{3}{8} A c^{-2} \p_\theta z^2) \cir\psi \notag \\
& + \psi^2_x( BQ_1 - \tfrac{1}{2} Ac^{-1} Q_1 \p_\theta z + B_B - \tfrac{1}{2} B_A c^{-1} \p_\theta z - \tfrac{1}{2} Ac^{-1} f + \tfrac{1}{8} A \mathcal E) \cir\psi.
\end{align}
\end{lemma}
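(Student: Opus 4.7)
The plan is to prove both identities by starting from the first-order identities of Lemma \ref{lem:trans:1.1} and differentiating once more in $x$, using the fact that $A$, $B$, $C$ are themselves transversal (by the 2-transversality of $h$) to reapply the transversality machinery whenever a $\partial_\theta^2 w$ term appears. The three analytic ingredients are the master identity \eqref{trans:equation}, the chain rule formula $\psi_{xx}=\psi_x^2(Q_1-\tfrac{1}{2}c^{-1}\partial_\theta c)\circ\psi$ from \eqref{psi_xx:formula}, and the evolution equations \eqref{eq:w:z:k:a} and \eqref{xland-sigma}; everything else is algebra.

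For \eqref{eq:trans:1.2.1}, I begin with \eqref{eq:trans:1.1.1} applied to $h$ and take $\partial_x$ of both sides. The time derivative piece commutes with $\partial_x$; expanding $\partial_x(\psi_x(c^{-1}\partial_\theta c\, h)\circ\psi)$ via the product and chain rules, substituting the formula for $\psi_{xx}$, and using the decomposition $\partial_\theta h = Ac^{-1}\partial_\theta c + B$ produces the $c^{-1}\partial_\theta^2 c\,h$, $c^{-2}(\partial_\theta c)^2[A-\tfrac{3}{2}h]$, and $c^{-1}\partial_\theta c\cdot[\text{coeff}]$ terms appearing in the first line of the right-hand side. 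The $\partial_\theta w$–containing piece $\partial_x(\psi_x(B-\tfrac{1}{2}c^{-1}C)\partial_\theta w\circ\psi)$ is handled by reapplying \eqref{eq:trans:1.1.2} with $h$ replaced by the transversal function $B-\tfrac{1}{2}c^{-1}C$; the transversality data $A_B,B_B,C_B,A_C,B_C,C_C$ appear naturally through $\partial_\theta(B-\tfrac{1}{2}c^{-1}C)$ and $(\partial_t+\lambda_1\partial_\theta)(B-\tfrac{1}{2}c^{-1}C)$, which is where the 2-transversality hypothesis is essential. The remaining piece $\partial_x(\psi_x(\tfrac{1}{2}c^{-1}C\partial_\theta z - 4\partial_\theta a\,h)\circ\psi)$ is expanded by product/chain rule; time derivatives of $z,k,a,c$ arising from $(\partial_t+\lambda_1\partial_\theta)$ are replaced via \eqref{eq:w:z:k:a}--\eqref{xland-sigma}, and the $\partial_\theta^2 z$ and $\partial_\theta k$ contributions are absorbed into the quantities $f$ (from \S\ref{sec:z_xx}) and $\mathcal E$ (from \S\ref{sec:k_xx}) as indicated. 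Collecting every time-derivative contribution into a single $-\tfrac{3}{2}\partial_t(\cdot)$ and gathering the bounded remainders yields \eqref{eq:trans:1.2.1}.

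The derivation of \eqref{eq:trans:1.2.2} is structurally simpler because no $\partial_\theta w$ factor is present initially. I first compute $\partial_x(h\circ\psi)=\psi_x(Ac^{-1}\partial_\theta c+B)\circ\psi$ from the transversality of $h$, then apply $\partial_x$ again. Each resulting $\psi_{xx}$ is eliminated via \eqref{psi_xx:formula}, every $\partial_\theta A$ and $\partial_\theta B$ is decomposed using the transversality of $A$ and $B$ (introducing $A_A,B_A,A_B,B_B$), and every spurious $\partial_\theta^2 w$ generated this way is reexpressed using \eqref{trans:equation} applied to $A$ (whose characteristic derivative is governed by $A_A$ and $C_A$, contributing the $C_A c^{-2}$ and $C_A c^{-2}\partial_\theta z$ terms). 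The single $-\tfrac{3}{4}\partial_t(\psi_x^2 A c^{-2}\partial_\theta c)\circ\psi$ on the right-hand side is identified by recognizing that the $Ac^{-1}\partial_\theta c$ piece, once multiplied by $\psi_x$ and subjected to $\partial_x$, regenerates a time derivative exactly as in the derivation of \eqref{eq:trans:1.1.1}, modulo a $\partial_\theta w$-containing remainder that is then reorganized into the displayed coefficients of $\partial_\theta w$, $c^{-1}\partial_\theta c$, and the error terms involving $f$ and $\mathcal E$.

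The main obstacle is purely algebraic bookkeeping: the computation produces dozens of intermediate terms involving products of $c$, $\partial_\theta c$, $\partial_\theta z$, $\partial_\theta k$, $a$, and the transversality coefficients of $h,A,B,C$, and these must be sorted into the precise combinations appearing on the right-hand sides. There are no genuinely new analytic ideas, but the bookkeeping must be done carefully to verify the exact constants and signs; the factors of $\tfrac{3}{2}$, $\tfrac{1}{2}$, $\tfrac{1}{4}$, $\tfrac{3}{8}$, etc. arise from the specific wave speed $\lambda_1=\tfrac{1}{3}w+z$ (equivalently $\varphi=\tfrac{2}{3}$) and from the evolution equations for $c$ and $k$.
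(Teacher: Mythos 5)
Your proposal is correct and follows essentially the same route as the paper: for \eqref{eq:trans:1.2.1} the paper also takes $\p_x$ of \eqref{eq:trans:1.1.1} and reapplies \eqref{eq:trans:1.1.2} to the transversal function $\tilde h := B-\tfrac{1}{2}c^{-1}C$ (with $B_{\tilde h}=B_B-\tfrac12 c^{-1}B_C$, $C_{\tilde h}=C_B-\tfrac12 c^{-1}C_C+2ac^{-1}C$), handling the remaining terms by the product/chain rule, \eqref{psi_xx:formula}, and the definitions of $f$ and $\mathcal E$. For \eqref{eq:trans:1.2.2} your conversion of the spurious $\p_\theta^2 w$ terms via \eqref{trans:equation} applied to the coefficient built from $A$ is exactly what the paper does in packaged form by splitting $\p_x(h\cir\psi)$ as $\psi_x(\tfrac12 Ac^{-1}\p_\theta w + B-\tfrac12 Ac^{-1}\p_\theta z)\cir\psi$ and applying \eqref{eq:trans:1.1.2} to $\tfrac12 Ac^{-1}$, so the two arguments coincide up to bookkeeping.
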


\begin{proof}[Proof of Lemma~\ref{lem:trans:1.2}]
Taking $\p_x$ of \eqref{eq:trans:1.1.1} gives us
\begin{align*}
\p^2_x\big( (h\p_\theta w)\cir\psi\big) & = - \tfrac{3}{2} \p^2_{tx}\big( \psi_x(c^{-1} \p_\theta c)\cir\psi\big) + \p_x\big( \psi_x([B-\tfrac{1}{2}c^{-1}C]\p_\theta w)\cir\psi \big) \notag \\
& + \p_x\big( \psi_x (\tfrac{1}{2} c^{-1}C \p_\theta z - 4 \p_\theta a h)\cir\psi\big).
\end{align*}
If we define $\tilde{h} : = B-\tfrac{1}{2} c^{-1}C$, then the rules for transversal functions tell us that
\begin{align*}
B_{\tilde{h}} & = B_B- \tfrac{1}{2}c^{-1} B_C \\
C_{\tilde{h}} & = C_B - \tfrac{1}{2} c^{-1} C_C + 2 ac^{-1} C.
\end{align*}
Applying \eqref{eq:trans:1.1.2} to $\tilde{h}$ and simplifying gives us \eqref{eq:trans:1.2.1}.

For the next identity, we see that
\begin{align*}
\p^2_x(h\cir\psi) & = \p_x\big( \psi_x(\tfrac{1}{2} Ac^{-1} \p_\theta w)\cir\psi \big) + \p_x\big( \psi_x (B-\tfrac{1}{2} Ac^{-1} \p_\theta z) \cir\psi \big).
\end{align*}
Applying \eqref{eq:trans:1.1.2} to the function $\tfrac{1}{2} Ac^{-1}$ and simplifying gives us \eqref{eq:trans:1.2.2}.
\end{proof}

The following lemma will be used in \S~\ref{sec:4DE} and \S~\ref{sec:5DE}.

\begin{lemma}[Classes of transversal functions]
\label{lem:trans:3}
Let $\varphi> 0$ and let $\Psi$ be the flow of $\lambda = (1-\varphi)w + (\frac{1}{3} + \varphi) z$.
Then
\begin{enumerate}
\item If $h$ is a transversal function and $H$ is defined by 
$$ H\cir\Psi(x,t) : = \int^t_{-\varepsilon} \! \! h\cir\Psi (x,\tau) \: d\tau $$
then $H$ is transversal.
\item $\Psi_x \cir\Psi^{-1}$ is a transversal function.
\item If $h$ is a transversal function and $K$ is defined by
$$ K\cir\Psi(x,t) : = \int^t_{-\varepsilon} (h\p_\theta w)\cir\Psi(x,\tau) \: d\tau $$
then $K$ is transversal.
\item If $h$ is a 2-transversal function and $H$ is defined by 
$$ H\cir\Psi(x,t) : = \int^t_{-\varepsilon} \! \! h\cir\Psi (x,\tau) \: d\tau $$
then $H$ is 2-transversal.
\item $\Psi_x \cir \Psi^{-1}$ is a 2-transversal function.
\item  If $h$ is a 2-transversal function and $K$ is defined by
$$ K\cir\Psi(x,t) : = \int^t_{-\varepsilon} (h\p_\theta w)\cir\Psi(x,\tau) \: d\tau $$
then $K$ is 2-transversal.
\end{enumerate}

\end{lemma}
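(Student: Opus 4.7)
The plan is to prove (1)--(3) by explicit computation and then deduce (4)--(6) by inspecting the resulting formulas. In each case, the transport derivative $\p_t + \lambda\p_\theta$ of the constructed object will be read off directly from its defining identity, while the spatial derivative will be obtained by differentiating in the Lagrangian variable $x$, pulling back through $\Psi$, and invoking either the transversality of $h$, the identity \eqref{trans:equation}, or the flow formula \eqref{eq:flow_x}. The crucial quantitative input in each case is the integral bound $\int_{-\eps}^{T_*} |\p_\theta w|\cir\Psi\,d\tau \lesssim 1$ from Lemma~\ref{lem:integralbound}, which controls the only potentially unbounded object that can appear.

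For (1), since $H\cir\Psi = \int_{-\eps}^t h\cir\Psi\,d\tau$, applying $\p_t$ at fixed $x$ yields $\p_t H + \lambda\p_\theta H = h$, forcing the natural choice $A_H = 0$, $C_H = -h/\varphi$; it remains to bound $B_H := \p_\theta H$. Differentiating in $x$ and using transversality of $h$ produces
\[
\Psi_x(\p_\theta H)\cir\Psi = \int_{-\eps}^t \Psi_x(A' c^{-1}\p_\theta c + B')\cir\Psi\,d\tau,
\]
and then $\Psi_x \sim 1$, $c \sim 1$, $|\p_\theta c|\lesssim |\p_\theta w|+|\p_\theta z|$, together with the finiteness of $\int |\p_\theta w|\cir\Psi$ and $\int |\p_\theta z|\cir\Psi$ up to $T_*$, give boundedness. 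For (3), identity \eqref{trans:equation} integrated from $-\eps$ to $t$ yields $\p_\theta K = -\tfrac{h}{\varphi}\, c^{-1}\p_\theta c + B_K$ with $B_K$ a bounded integral remainder, while the transport relation $\p_t K + \lambda\p_\theta K = h\,\p_\theta w$ reads off $A_K = -h/\varphi$ and $C_K = 0$. For (2), formula \eqref{eq:flow_x} expresses $\log(\Psi_x\cir\Psi^{-1})$ as $-\tfrac{1-\varphi}{\varphi}(\log c - \log c_0\cir\Psi^{-1})$ plus a $\Psi$-time-integral of bounded functions; $\log c$ is transversal by direct check from the $c$-evolution, $\log c_0\cir\Psi^{-1}$ is transported with spatial derivative $(c_0'/c_0)\cir\Psi^{-1}/(\Psi_x\cir\Psi^{-1})$ bounded, and the time-integral is transversal by (1), after which the product and smooth-composition rules from the remarks deliver transversality of $\Psi_x\cir\Psi^{-1} = \exp(\cdot)$.

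Statements (4)--(6) will follow by inspecting the constructions above: in each case the triple $(A,B,C)$ is assembled from the 2-transversal data $(A',B',C')$ of $h$ and from the basic functions $c, c^{-1}, c_0\cir\Psi^{-1}, a, \p_\theta z$ --- each 2-transversal by direct verification or by the remarks --- via sums, products, reciprocals, smooth compositions, and $\Psi$-time integrals. The main obstacle I anticipate is verifying transversality, as opposed to mere boundedness, of the integral remainders $B_H$ in (4) and $B_K$ in (6); each is itself a $\Psi$-time integral of quantities built from the transversal data $A',B',C'$ of $h$, so the required transversality will follow by applying statement (1) one level higher. The bootstrap is clean because at the 2-transversal level the integrand carries only 1-transversal data as inputs, and no circularity arises.
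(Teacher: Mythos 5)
Your proposal follows essentially the same route as the paper: parts (i)--(iii) are proved exactly as there --- reading off the transport derivative from the defining time integral, obtaining $\p_\theta H$ and $\p_\theta K$ by differentiating in the Lagrangian variable (this reproduces \eqref{eq:trans:3.1} and \eqref{eq:trans:3.2}), and controlling the remainders via $\Psi_x\sim 1$ and the integral bound \eqref{bd:int} of Lemma~\ref{lem:integralbound}; part (ii) via \eqref{eq:flow_x}, transversality of $c^{\pm1}$, and part (i); and (iv)--(vi) by upgrading the triples $(A,B,C)$ so constructed. One correction to your closing paragraph: statement (1) alone cannot deliver transversality of the remainders $B_H$ and $B_K$, because their integrands are not bounded --- they contain $\p_\theta w$ through $A'c^{-1}\p_\theta c$ (and through the explicit $B'\p_\theta w$ term in \eqref{eq:trans:3.2}) as well as the $\Psi_x$ weight; you must split $c^{-1}\p_\theta c=\tfrac{1}{2c}(\p_\theta w-\p_\theta z)$ and invoke statements (2) and (3) in addition to (1), which is precisely how the paper deduces (iv) and (vi) from \eqref{eq:trans:3.1} and \eqref{eq:trans:3.2} (and, for (vi), it also uses that $\p_\theta a$, $\p_\theta z$, $\mathcal E$, $f$ are transversal to handle the $h\p^2_\theta z$ term, which your list of basic 2-transversal functions implicitly covers). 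With that adjustment your argument coincides with the paper's.
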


\begin{proof}[Proof of Lemma~\ref{lem:trans:3}]
In this proof, $h$ satisfies
$$ \begin{cases} \p_\theta h = A c^{-1} \p_\theta c + B \\ -\tfrac{1}{\varphi} \p_t(h\cir\Psi) = (A \p_\theta w + C) \cir\Psi \end{cases}. $$

{\it{(i)}} Since
\begin{equation}
\label{eq:trans:3.1}
 \begin{cases} \p_\theta H = \bigg(\Psi_x^{-1} \int^t_{-\varepsilon}  (A c^{-1} \p_\theta c + B) \cir\Psi \: d\tau\bigg) \cir \Psi^{-1} \\ -\tfrac{1}{\varphi} \p_t(H\cir\Psi) = -\tfrac{1}{\varphi} h\cir\Psi \end{cases} 
\end{equation}
it follows from \eqref{bd:int} and the fact that $\Psi_x \sim 1$ that $H$ is transversal.

{\it{(ii)}} We know that $a$ is transversal, and it will be proven in \S~\ref{sec:z_xx} that $\p_\theta z$ is transversal. Therefore, part (i) applies to the function
$$ H\cir\Psi = \int^t_{-\varepsilon} \! \! (2-\frac{2}{3}\frac{1}{\varphi})) \p_\theta z - \tfrac{8}{3} \frac{1-\varphi}{\varphi}  a) \cir\Psi. $$
Since $F(x) = e^x$ is smooth, it follows that
$$ e^{\int^t_{-\varepsilon} (2-\frac{2}{3}\frac{1}{\varphi})) \p_\theta z - \tfrac{8}{3} \frac{1-\varphi}{\varphi}  a) \circ\Psi} \cir\Psi^{-1} $$
is transversal. We already know that $c$ and $c^{-1}$ are both 2-transversal, so it now follows from \eqref{eq:flow_x} that $\Psi_x\cir\Psi^{-1}$ is transversal.

{\it{(iii)}} Using \eqref{trans:equation} tells us that
\begin{align}
\label{eq:trans:3.2}
\p_\theta K\cir\psi & = - \tfrac{1}{\varphi} (h c^{-1} \p_\theta c)\cir\Psi + \frac{1}{\varphi} \Psi_x^{-1} c^{-1}_0 c'_0 h_0  \notag \\
& - \Psi^{-1}_x \int^t_{-\varepsilon} \! \!  \Psi_x ( (\tfrac{2}{3}\tfrac{1}{\varphi}-1) h \p^2_\theta z + \tfrac{8}{3} \tfrac{1}{\varphi} \p_\theta a h) \cir \Psi \: d\tau + \Psi^{-1}_x \int^t_{-\varepsilon} \! \! \Psi_x (B \p_\theta w - c^{-1} \p_\theta c C) \cir\Psi \: d\tau, \notag \\
- \tfrac{1}{\varphi} \p_t(K\cir\Psi) & = - \tfrac{1}{\varphi} (h \p_\theta w)\cir\Psi.
\end{align}
It now follows from \eqref{z_xx:bound} that $K$ is transversal.

{\it{(iv)}} This follows immediately from applying {\it{(ii)}} and {\it{(iii)}} to \eqref{eq:trans:3.1}.

{\it{(v)}} It will be proven in \S~\ref{sec:a_xx} that $\p_\theta a$ is transversal, from which it will follow that $a$ is 2-transversal, and it will be proven in \S~\ref{sec:z_xxx} that $\p_\theta z$ is 2-transversal. Since $F(x) = e^x$ is smooth, it follows from {\it{(iv)}} that
$$ e^{\int^t_{-\varepsilon} (2-\frac{2}{3}\frac{1}{\varphi})) \p_\theta z - \tfrac{8}{3} \frac{1-\varphi}{\varphi}  a) \circ\Psi} \cir\Psi^{-1} $$
is 2-transversal. Since $c^{-1}$ is 2-transversal it now follows from \eqref{eq:flow_x} that $\Psi_x\cir\Psi^{-1}$ is 2-transversal.

{\it{(vi)}} We prove in \S~\ref{sec:a_xx}, \ref{sec:z_xx}, \ref{sec:k_xxx}, \ref{sec:z_xxx} that $\p_\theta a, \p_\theta z, \mathcal E,$ and $f$ are all transversal, so our result follows from applying {\it{(i)}}, {\it{(ii)}}, and {\it{(iii)}} to \eqref{eq:trans:3.2}.

\end{proof}



\section{Second Derivative Estimates}
\label{sec:2DE}

\subsection{$\p_\theta^2 k$ bounds}
\label{sec:k_xx}

Differentiating \eqref{k_x:formula} and plugging in \eqref{phi_xx:formula} gives us
\begin{align}
\label{k_xx:formula}
\phi^2_x\p_\theta^2 k \cir \phi  & = k''_0 - \phi_{xx} \p_\theta k \cir \phi \notag \\
& = k''_0 - \phi_x \Phi \p_\theta k \cir \phi + 2\phi^2_x \frac{\p_\theta c \cir \phi \p_\theta k \cir \phi}{c\cir \phi} \notag \\
& = k''_0 - \Phi k'_0 + 2\phi^2_x \frac{\p_\theta c \cir \phi \p_\theta k \cir \phi}{c\cir \phi}.
\end{align}

If we define
\begin{equation}
\label{E:definition}
\mathcal E : = \p_\theta^2k- 2c^{-1} \p_\theta c \p_\theta k, 
\end{equation}
then it follows that $\mathcal E = [ \phi^{-2}_x (k''_0 - \Phi k'_0) ] \cir\phi^{-1}$ and from \eqref{Phi:bound} we conclude that
\begin{align}
\label{E:bound}
|\mathcal E| & \lesssim \varepsilon^{\gamma_2\wedge \gamma_1-1}. \\
\implies |\p^2_\theta k| & \lesssim  \varepsilon^{\gamma_2 \wedge \gamma_1-1} + \varepsilon^{\gamma_1} |\p_\theta w|. \label{k_xx:bound}
\end{align} 
With this notation, we can write
\begin{align}
\label{k_x:trans}
\p^2_\theta k  & = (2\p_\theta k) c^{-1} \p_\theta c + \mathcal E\notag \\
-\tfrac{3}{2} \p_t(\p_\theta k\cir\psi) & = (2\p_\theta k) \p_\theta w + c\mathcal E,
\end{align}
so  $\p_\theta k$ is transversal. The fact that $\p_\theta k$ is transversal will be used through \S~\ref{sec:3DE} and \S~\ref{sec:2DE}.

\subsection{$\p_\theta^2 a$ bounds}
\label{sec:a_xx}

Using \eqref{phi_x:formula} and \eqref{c_t:identity} we have
\begin{align}
\label{a:bound:identity1}
\I_t \p_x(c^2\cir\phi) & = 2\phi_x \I_t \big[ -\frac{3}{2} \p_t(c\cir\phi) - (4ac + c\p_\theta z)\cir\phi \big] \notag \\
& = c^2_0 \I^{-1}_t \big[ 3 \p_t (c^{-1}\cir \phi) - (8 c^{-1} a + 2c^{-1} \p_\theta z)\cir \phi \big] \notag \\
& = c^2_0 \big[ 3 \p_t\big( \I^{-1}_t c^{-1}\cir \phi \big) - 2 \I^{-1}_t (c^{-1} \p_\theta z)\circ \phi \big].
\end{align}
Therefore, differentiating \eqref{vorticity:duhamel} gives us
\begin{align}
\label{vorticity:duhamel:p_x}
\p_x(\varpi\cir\phi) & = \p_x\big( \varpi_0 \I_t \big) - 4 \frac{k'_0 e^{k_0}}{c_0} \I_t  + 3\frac{k'_0 e^{k_0}}{c\cir \phi} - \tfrac{8}{3}k'_0e^{k_0} \I_t \int^t_{-\varepsilon} \! \! \I^{-1}_\tau (c^{-1} \p_\theta z)\cir \phi \: d\tau \notag \\
& + \tfrac{4}{3}\p_x\big( \frac{k'_0 e^{k_0}}{c^2_0} \I_t \big) \int^t_{-\varepsilon} \! \! \I_\tau (c^2\cir \phi) \: d\tau + \tfrac{4}{3}\frac{k'_0 e^{k_0}}{c^2_0} \I_t \int^t_{-\varepsilon} \! \! \p_x\big( \I_\tau \big) (c^2\cir \phi) \: d\tau.
\end{align}

It is easy to check that
\begin{equation}
\label{if_x:a:bound}
|\I_t'| \lesssim \varepsilon.
\end{equation}
It follows from this bound and \eqref{vorticity:duhamel:p_x} that
\begin{align}
 |\p_\theta \varpi\cir\phi| & \lesssim |\varpi'_0| + \varepsilon^{\gamma_2+1 \wedge \gamma_1}. \notag
\end{align}
By differentiating the equation \eqref{vorticity_0:formula} and using our assumptions on the initial data, we conclude that $|\varpi'_0| \lesssim \varepsilon^{-1}$. Therefore,
\begin{equation}
\label{vorticity_x:bound}
|\p_\theta \varpi| \lesssim \varepsilon^{-1}.
\end{equation}
Differentiating \eqref{a_x:formula} in space and using our first derivative estimates along with \eqref{vorticity_x:bound} gives us
\begin{align}
\label{a_x:trans}
\p^2_\theta a & = 2[\p_\theta a -c -2z] c^{-1} \p_\theta c + 2\p_\theta z-\tfrac{1}{4} c^2 \p_\theta \varpi e^{-k} + \tfrac{1}{4} c^2 \p_\theta k \varpi e^{-k} \notag \\ 
-\tfrac{3}{2} \p_t (\p_\theta a \cir\psi) & =  \big( 2[\p_\theta a -c -2z] \p_\theta w + 4a \p_\theta a - \tfrac{1}{4} c^3 \p_\theta \varpi e^{-k}+ \tfrac{1}{4} c^3 \p_\theta k \varpi e^{-k}\big) \cir\psi.
\end{align}
So \eqref{vorticity_x:bound} lets us conclude that $\p_\theta a$ is transversal, which will be used in \S~\ref{sec:3DE} and \S~\ref{sec:4DE}. This equation for $\p^2_\theta a$ and our estimate \eqref{vorticity_x:bound} also lets us conclude that
\begin{align}
\label{a_xx:bound}
|\p^2_\theta a| & \lesssim \varepsilon^{-1} + |\p_\theta w|.
\end{align}

It now follows from \eqref{a_xx:bound} that 
\begin{equation}
\label{Phi_x:bound}
|\p_x\Phi| \lesssim \varepsilon^{-2} + |w''_0|.
\end{equation}

\subsection{$\p_\theta^2 z$ bounds}
\label{sec:z_xx}

Let's introduce the new variable
\begin{equation}
\label{f:def}
f : = \partial^2_\theta z - \tfrac{1}{2} c^{-1} \p_\theta c \p_\theta z + \tfrac{1}{4}c \p^2_\theta k.
\end{equation} 
Using the identities from \S~\ref{sec:id} along with \eqref{E:definition} gives us
\begin{align}
\label{f_t:formula}
\p_t(f\cir\psi) & : = \big( \p_t(\log c\cir\psi) + \tfrac{1}{2} \p_t(k\cir\psi) - 3\p_\theta z\cir\psi \big) (f\cir\psi) \notag \\
& + ( \tfrac{1}{12} c^2 \p_\theta k \p^2_\theta k - \tfrac{1}{2} c^{-1} \p_\theta c \p_\theta z^2) \cir\psi - \tfrac{1}{8} \mathcal E ( \tfrac{1}{3} c\p_\theta w - c\p_\theta z)\cir\psi \notag \\
& - \tfrac{8}{3} ( \p^2_\theta a z + \tfrac{3}{2} \p_\theta a \p_\theta z - \tfrac{1}{2} \p_\theta a c^{-1} \p_\theta c z) \cir\psi.
\end{align}
If we define
\begin{equation}
J_t : = e^{\frac{1}{2}k\circ\psi  - 3 \int^t_{-\varepsilon} \p_\theta z\circ\psi},
\end{equation}
then \eqref{f_t:formula} gives us the Duhamel formula
\begin{align}
\label{f:duhamel}
f\cir\psi & = c^{-1}_0 (z''_0 - \tfrac{1}{2}c^{-1}_0 c'_0 z'_0 + \tfrac{1}{2} c_0 k''_0)e^{-\frac{1}{2}k_0} J_t c\cir\psi + \tfrac{1}{12} c\cir \psi J_t \int^t_{-\varepsilon} \! \! J^{-1}_\tau ( c \p_\theta k \p^2_\theta k) \cir \psi \: d\tau \notag \\
& - \tfrac{1}{2}c\cir \psi J_t \int^t_{-\varepsilon} \! \! J^{-1}_\tau (c^{-2} \p_\theta c \p_\theta z^2) \cir \psi \: d\tau - \tfrac{1}{8} c\cir \psi J_t \int^t_{-\varepsilon} \! \! J^{-1}_\tau \mathcal E ( \tfrac{1}{3} \p_\theta w - \p_\theta z) \cir\psi \: d\tau \notag \\
& - \tfrac{8}{3} c\cir \psi J_t \int^t_{-\varepsilon} \! \! J^{-1}_\tau ( \p^2_\theta a c^{-1} z + \tfrac{3}{2} \p_\theta a c^{-1} \p_\theta z - \tfrac{1}{2} \p_\theta a c^{-2} \p_\theta c z) \cir \psi \: d\tau.
\end{align} 
It now follows from \eqref{k_xx:bound}, \eqref{E:bound}, \eqref{a_xx:bound} that
\begin{equation}
\label{f:bound}
|f|  \lesssim \varepsilon^{\beta_2 \wedge \gamma_2 \wedge \beta_1-1}.
\end{equation}

It follows immediately from this bound and \eqref{k_xx:bound} that
\begin{equation}
\label{z_xx:bound}
|\p^2_\theta z | \lesssim \varepsilon^{\beta_2 \wedge \gamma_2 \wedge \beta_1-1} + \varepsilon^{\beta_1} |\p_\theta w|.
\end{equation}
This bound tells us that
\begin{equation}
\label{Psi:bound}
|\Psi| \lesssim \varepsilon^{-1}.
\end{equation}

We can also conclude that $\p_\theta z$ is transversal. Indeed
\begin{align}
\label{z_x:trans}
\p^2_\theta z & = \big[ \tfrac{1}{2} \p_\theta z - \tfrac{1}{2} c\p_\theta k \big] c^{-1} \p_\theta c + f-\tfrac{1}{4} c\mathcal E \notag \\
-\tfrac{3}{2}\p_t(\p_\theta z\cir\psi) & = \big( \big[ \tfrac{1}{2} \p_\theta z - \tfrac{1}{2} c\p_\theta k \big]\p_\theta w + \tfrac{1}{2} c\p_\theta k \p_\theta z - \tfrac{1}{4} c^2 \mathcal E + 4\p_\theta a z + a \p_\theta z \big) \cir\psi.
\end{align}
The fact that $\p_\theta z$ is transversal will be used in \S~\ref{sec:3DE} and \S~\ref{sec:4DE}.


\section{Third derivative estimates}
\label{sec:3DE}

\subsection{$\p_\theta^3k$ bounds}
\label{sec:k_xxx}

Using the fact that $\mathcal E = [ \phi^{-2}_x (k''_0 - \Phi k'_0) ] \cir\phi^{-1}$, we can compute that
\begin{equation}
\p_\theta \mathcal E = [ \phi^{-3}_x(k_0'''-3\Phi k''_0 + (2\Phi^2-\p_x \Phi)k'_0) ] \cir \phi^{-1}  + 4 \mathcal E c^{-1}\p_\theta c
\end{equation}
Define
\begin{equation}
\tilde{\mathcal E} : = [ \phi^{-3}_x(k_0'''-3\Phi k''_0 + (2\Phi^2-\p_x \Phi)k'_0) ] \cir \phi^{-1}.
\end{equation}
We know from \eqref{Phi:bound} and \eqref{Phi_x:bound} that
\begin{equation}
\label{E:bound:2}
|\tilde{\mathcal E}| \lesssim  \varepsilon^{\gamma_1} |w''_0\cir\phi^{-1}| + \varepsilon^{\gamma_3 \wedge \gamma_2 -1 \wedge \gamma_1-2}.
\end{equation}
Since
\begin{align}
\p_\theta  \mathcal E & =   4 \mathcal E c^{-1} \p_\theta c + \tilde{\mathcal E} \notag \\
-\tfrac{3}{2} \p_t (\mathcal E\cir\psi) & = (4 \mathcal E \p_\theta w -8 \p_\theta a \p_\theta k + \tilde{\mathcal E} c)\cir\psi.
\end{align}
it follows that $\mathcal E$ is transversal and therefore $\p_\theta k$ is 2-transversal. 

Taking $\p_\theta$ of \eqref{E:definition} gives us
\begin{align}
\label{k_xxx:formula}
\p^3_\theta k & = \tilde{\mathcal E}  + 6 \mathcal E c^{-1} \p_\theta c + 2 c^{-2} \p_\theta c^2 \p_\theta k + 2c^{-1}\p^2_\theta c\p_\theta k .
\end{align}
It follows that
\begin{align}
\label{k_xxx:bound}
|\p^3_\theta k| & \lesssim \varepsilon^{\gamma_3\wedge \gamma_2-1\wedge \gamma_1-2} + \varepsilon^{\gamma_2 \wedge \gamma_1-1} |\p_\theta w| \notag \\
& + \varepsilon^{\gamma_1} |w''_0\cir\phi^{-1}| + \varepsilon^{\gamma_1} |\p_\theta w|^2 + \varepsilon^{\gamma_1} |\p^2_\theta w|.
\end{align}

\subsection{ $\p_\theta^3 a$ bounds} 
\label{sec:a_xxx}

Taking $\p_x$ of \eqref{vorticity:duhamel:p_x} gives us
\begin{align*}
\p^2_x(\varpi\cir\phi) & =  \p^2_x\big( \varpi_0 \I_t \big) -4 \p_x\big( \tfrac{k'_0 e^{k_0}}{c_0} \I_t \big) + 4\frac{\p_x(k'_0 e^{k_0})}{c\cir\phi} - 4 k'_0 e^{k_0} \phi_x \frac{\p_\theta c\cir\phi}{(c\cir\phi)^2}  \\
&  - \tfrac{8}{3}\p_x(k'_0 e^{k_0} \I_t) \int^t_{-\varepsilon} \! \! \I^{-1}_\tau (c^{-1} \p_\theta z)\cir \phi \: d\tau - \tfrac{8}{3}k'_0 e^{k_0} \I_t \int^t_{-\varepsilon} \! \! \p_x\big( \I^{-1}_\tau (c^{-1} \p_\theta z) \cir \phi \big) \: d\tau \\
& + \tfrac{4}{3} \p^2_x\big( \frac{k'_0 e^{k_0}}{c^2_0} \I_t \big) \int^t_{-\varepsilon} \! \! \I_\tau (c^2\cir\phi) \: d\tau + \tfrac{8}{3}\p_x\big( \frac{k'_0 e^{k_0}}{c^2_0} \I_t \big) \int^t_{-\varepsilon} \! \! \p_x\big( \I_\tau\big) (c^2\cir\phi) \: d\tau \\
& + \tfrac{4}{3}\p_x\big( \frac{k'_0e^{k_0}}{c^2_0} \I_t \big) \int^t_{-\varepsilon} \! \! \I_\tau \p_x(c^2\cir\phi) \: d\tau + \tfrac{4}{3}\frac{k'_0 e^{k_0}}{c^2_0} \I_t \int^t_{-\varepsilon} \! \! \p_x(\I_\tau) \p_x(c^2\cir\phi) \: d\tau \\
& + \tfrac{4}{3}\frac{k'_0 e^{k_0}}{c^2_0} \I_t \int^t_{-\varepsilon} \! \! \p^2_x(\I_\tau) (c^2\cir\phi) \: d\tau.
\end{align*}
It is easy to use \eqref{phi_xx:formula} and \eqref{a_xx:bound} to get
$|\I_t''|, |\p^2_x \I_{t,\tau}| \lesssim 1.$
Using \eqref{a:bound:identity1} we have
\begin{align}
\label{a:bound:identity2}
\p_x(\I_t) \p_x(c^2\cir\phi) & = \frac{8}{3}\bigg( \int^t_{-\varepsilon} \! \! \p_x(a \cir\phi) \bigg) \I_t \p_x(c^2\cir\phi) \\
& = c^2_0 \bigg[ 8 \p_t\bigg( \bigg( \int^t_{-\varepsilon} \! \! \p_x(a\cir\phi) \bigg) \I^{-1}_t c^{-1}\cir\phi \bigg) - 8 \p_x(a\cir\phi) \I^{-1}_t c^{-1}\cir\phi + 2 \p_x(\I_t^{-1}) (c^{-1}\p_\theta z)\cir\phi \bigg].
\end{align}
Using \eqref{a:bound:identity1} and \eqref{a:bound:identity2} we have
\begin{align*}
\p_x\big( \frac{k'_0e^{k_0}}{c^2_0} \I_t \big) \int^t_{-\varepsilon} \! \! \I_\tau \p_x(c^2\cir\phi) \: d\tau & = -3 \frac{\p_x(k'_0 e^{k_0} \I_t)}{c_0} + 6 \frac{c'_0 k'_0 e^{k_0}}{c^2_0} \I_t \\
& + \bigg[ 3\p_x(k'_0 e^{k_0}) +8 k'_0 e^{k_0} \int^t_{-\varepsilon} \! \! \p_x(a\cir\phi) \: d\tau  -6 \tfrac{c'_0 k'_0 e^{k_0}}{c_0} \bigg] c^{-1}\cir\phi \\
&  - 2\p_x(k'_0 e^{k_0} \I_t) \int^t_{-\varepsilon} \! \! \I^{-1}_\tau (c^{-1} \p_\theta z)\cir \phi \: d\tau  + 4\frac{c'_0 k'_0 e^{k_0}}{c_0} \I_t \int^t_{-\varepsilon} \! \! \I^{-1}_\tau (c^{-1} \p_\theta z)\cir \phi \: d\tau. \\
 \frac{k'_0 e^{k_0}}{c^2_0} \I_t \int^t_{-\varepsilon} \! \! \p_x(\I_\tau) \p_x(c^2\cir\phi) \: d\tau & = 8 k'_0 e^{k_0} \bigg( \int^t_{-\varepsilon} \! \! \p_x(a\cir\phi) \bigg) c^{-1}\cir \phi + 2k'_0 e^{k_0} \I_t \int^t_{-\varepsilon} \! \! \p_x(\I^{-1}_t) (c^{-1} \p_\theta z)\cir\phi \: d\tau \\
 & - 8 c^2_0 k'_0 e^{k_0} \I_t \int^t_{-\varepsilon} \! \! \I^{-3}_\tau (\p_\theta a c^{-3})\cir\phi \: d\tau.
 \end{align*}
Therefore, we have
\begin{align}
\label{vorticity:duhamel:p_xx}
\p^2_x(\varpi\cir\phi) & = \p^2_x\big(\varpi_0 \I_t\big) - 8 \frac{\p_x( k'_0 e^{k_0}\I_t)}{c_0} + 12 \frac{c'_0 k'_0 e^{k_0}}{c^2_0} \I_t  \notag \\
& +\bigg[ 8\p_x(k'_0 e^{k_0}) +\tfrac{64}{3} k'_0 e^{k_0} \int^t_{-\varepsilon} \! \! \p_x(a\cir\phi) \: d\tau  -8 \tfrac{c'_0 k'_0 e^{k_0}}{c_0} \bigg] c^{-1}\cir\phi  - 4k'_0 e^{k_0} \phi_x (c^{-2} \p_\theta c)\cir\phi \notag \\
& - \tfrac{16}{3}\p_x(k'_0 e^{k_0} \I_t) \int^t_{-\varepsilon} \! \! \I^{-1}_\tau (c^{-1} \p_\theta z)\cir \phi \: d\tau  + \tfrac{16}{3}\frac{c'_0 k'_0 e^{k_0}}{c_0} \I_t \int^t_{-\varepsilon} \! \! \I^{-1}_\tau (c^{-1} \p_\theta z)\cir \phi \: d\tau \notag \\
& + \tfrac{4}{3}\p^2_x\big( \frac{k'_0 e^{k_0}}{c^2_0} \I_t \big) \int^t_{-\varepsilon} \! \! \I_\tau (c^2\cir\phi) \: d\tau + \tfrac{8}{3}\p_x\big( \frac{k'_0 e^{k_0}}{c^2_0} \I_t \big) \int^t_{-\varepsilon} \! \! \p_x\big( \I_\tau\big) (c^2\cir\phi) \: d\tau \notag \\
& + \tfrac{4}{3}\frac{k'_0 e^{k_0}}{c^2_0} \I_t \int^t_{-\varepsilon} \! \! \p^2_x(\I_\tau) (c^2\cir\phi) \: d\tau  \notag \\
& +\tfrac{2}{3} c^2_0 k'_0 e^{k_0} \I_t \int^t_{-\varepsilon} \! \!  \I^{-3}_\tau \big(\big[ c^{-4} \p_\theta z + c^{-3} \p_\theta k \big] \p_\theta w\big) \cir\phi \: d\tau - \tfrac{2}{3} c^2_0 k'_0 e^{k_0} \I_t \int^t_{-\varepsilon} \! \! \I^{-3}_\tau (c^{-4} \p_\theta z^2 + c^{-3} \p_\theta k \p_\theta z) \cir\phi \: d\tau  \notag \\
&  + c^2_0 k'_0 e^{k_0} \I_t \int^t_{-\varepsilon} \! \! \I^{-3}_\tau ( \tfrac{2}{3} c^{-2}\mathcal E - \tfrac{8}{3} c^{-3} f - \tfrac{32}{3} \p_\theta a c^{-3}) \cir\phi \: d\tau .
\end{align}
Rearranging this and using $k\cir \phi = k_0$, \eqref{k_x:formula}, \eqref{phi_x:formula:2}, and \eqref{phi_xx:formula:2}, we have
\begin{align}
\label{vorticity:duhamel:p_xx:2}
\p^2_\theta \varpi  & = A c^{-1} \p_\theta c + B
\end{align}
where
\begin{align}
A & : = 2 \p_\theta \varpi - 4 c^{-1}e^{k} \p_\theta k, \label{def:A} \\
B\cir\phi & : = -(Q_2c)\cir\phi \notag \\
& + \big[c^{-4}_0 \mathcal I^4_t \p^2_x\big(\varpi_0 \I_t\big) - 8  c^{-5}_0 \mathcal I^4_t\p_x( k'_0 e^{k_0}\I_t)  + 12 c_0^{-6} c'_0 k'_0 e^{k_0} \I_t^5\big] c^4\cir\phi \notag \\
& +\bigg[ 8\p_x(k'_0 e^{k_0}) +\tfrac{64}{3} k'_0 e^{k_0} \int^t_{-\varepsilon} \! \! \p_x(a\cir\phi) \: d\tau  -8 \tfrac{c'_0 k'_0 e^{k_0}}{c_0} \bigg]c_0^{-4} \mathcal I^4_t c^{3}\cir\phi  \notag \\
& - \tfrac{16}{3}c_0^{-4} \mathcal I_t^4\p_x(k'_0 e^{k_0} \I_t)  \int^t_{-\varepsilon} \! \! \I^{-1}_\tau (c^{-1} \p_\theta z)\cir \phi \: d\tau c^4\cir\phi  + \tfrac{16}{3}c_0^{-5}c'_0 k'_0 e^{k_0} \I_t^5  \int^t_{-\varepsilon} \! \! \I^{-1}_\tau (c^{-1} \p_\theta z)\cir \phi \: d\tau c^4\cir\phi \notag \\
& + \tfrac{4}{3}c_0^{-4} \mathcal I_t^4\p^2_x\big( \frac{k'_0 e^{k_0}}{c^2_0} \I_t \big) \int^t_{-\varepsilon} \! \! \I_\tau (c^2\cir\phi) \: d\tau c^4\cir\phi + \tfrac{8}{3}c_0^{-4} \mathcal I_t^4\p_x\big( \frac{k'_0 e^{k_0}}{c^2_0} \I_t \big) \int^t_{-\varepsilon} \! \! \p_x\big( \I_\tau\big) (c^2\cir\phi) \: d\tau c^4\cir\phi\notag \\
& + \tfrac{4}{3}c_0^{-6}k'_0 e^{k_0}\I_t^5 \int^t_{-\varepsilon} \! \! \p^2_x(\I_\tau) (c^2\cir\phi) \: d\tau c^4\cir\phi  \notag \\
& +\tfrac{2}{3} c^{-2}_0 k'_0 e^{k_0} \I_t^5 \bigg[ \int^t_{-\varepsilon} \! \!  \I^{-3}_\tau \big(\big[ c^{-4} \p_\theta z + c^{-3} \p_\theta k \big] \p_\theta w\big) \cir\phi \: d\tau -  \int^t_{-\varepsilon} \! \! \I^{-3}_\tau (c^{-4} \p_\theta z^2 + c^{-3} \p_\theta k \p_\theta z) \cir\phi \: d\tau \bigg] c^4\cir\phi  \notag \\
&  + c^{-2}_0 k'_0 e^{k_0} \I_t^5 \int^t_{-\varepsilon} \! \! \I^{-3}_\tau ( \tfrac{2}{3} c^{-2}\mathcal E - \tfrac{8}{3} c^{-3} f - \tfrac{32}{3} \p_\theta a c^{-3}) \cir\phi \: d\tau c^4\cir\phi . \label{def:B}
\end{align}
Taking two derivatives of \eqref{vorticity_0:formula} we get the bound
\begin{equation}
\label{vorticity_0xx:bound}
|\varpi''_0| \lesssim \varepsilon^{-2} + |w''_0|.
\end{equation}
It follows that
\begin{align}
\label{B:bound}
|B| & \lesssim \varepsilon^{-2}+ |w''_0\cir\phi^{-1}|  . 
\end{align}
We therefore conclude that
\begin{align}
\label{vorticity_xx:bound}
|\p^2_\theta \varpi| & \lesssim \varepsilon^{-2}+ |w''_0\cir\phi^{-1}| + \varepsilon^{-1}|\p_\theta w|.
\end{align}

Differentiating \eqref{vorticity_t:identity} and using \eqref{k_x:formula} allows us to compute that
\begin{align}
-3\p_t(\p_\theta \varpi \cir \phi) & = ( A \p_\theta w + C)\cir\phi  
\end{align}
where
\begin{align}
\label{def:C}
C & : =  4 e^k c^{-1} \p_\theta k \p_\theta z + 2 \p_\theta \varpi  \p_\theta z\notag \\
& - 8 a \p_\theta \varpi + 8 \p_\theta a \varpi + 4 e^k \mathcal E + 4 e^k \p_\theta k^2.
\end{align}
This, along with \eqref{vorticity_xx:bound}, implies that $\p_\theta \varpi$ is transversal. Therefore (see \eqref{a_x:trans}) $\p_\theta a$ is 2-transversal.

Now taking $\p_\theta$ or \eqref{a_x:trans} and using \eqref{k_xx:bound}, \eqref{z_xx:bound}, and \eqref{vorticity_xx:bound} gives us
\begin{align}
\label{a_xxx:bound}
\p^3_\theta a & = 2[\p_\theta a -c -2z] c^{-1} \p^2_\theta c + \mathcal O \big( \varepsilon^{-2} + |w''_0\cir\phi^{-1}| + \varepsilon^{-1} |\p_\theta w| + |\p_\theta w|^2 \big).
\end{align}

Now, one can compute that
\begin{align}
\label{a_xx:phidentity}
\p^2_x(a\cir\phi) & = \phi^2_x ( -[1+ 2c^{-1}z] \p_\theta w + [3+2c^{-1}z] \p_\theta z)\cir\phi \notag \\
& + \phi^2_x ( \tfrac{1}{4} c^2 \p_\theta k \varpi e^{-k} - \tfrac{1}{4} c^2 \p_\theta \varpi e^{-k} + \p_\theta a Q_2)\cir\phi.  
\end{align}
It now follows from \eqref{Phi:bound}, \eqref{eq:trans:2.1.2}, \eqref{k_xx:bound}, \eqref{z_xx:bound},  and \eqref{vorticity_xx:bound} that
\begin{align}
\label{eq:a_xxx:int}
\int^t_{-\varepsilon} \! \! \p^3_x(a\cir\phi) \: d\tau & = \phi^3_x ( [3 + 6c^{-1} z] c^{-1}\p_\theta c)\cir\phi + \mathcal O\big( \varepsilon^{-1} + \varepsilon |w''_0| \big).
\end{align}
Therefore,
\begin{equation}
\label{Phi_xx:bound}
\Phi_{xx} = 2\frac{c^2_0 c_0'''-3 c_0c_0'c_0''+ 2(c'_0)^3}{c_0^3} -\phi^3_x ( [16 + 32c^{-1} z] c^{-1}\p_\theta c)\cir\phi + \mathcal O\big( \varepsilon^{-1} + \varepsilon |w''_0| \big).
\end{equation}
These equations will be used in \S~\ref{sec:k_xxxx}, \S~\ref{sec:a_xxxx}, \S~\ref{sec:k_xxxxx}, \S~\ref{sec:a_xxxxx}. \footnote{For the fifth order estimates, one actually has to write out the full formula for \eqref{eq:a_xxx:int} and \eqref{Phi_xx:bound} and work with it. We will omit such straightforward but space-consuming details.}

\subsection{$\p_\theta^3 z$ bounds}
\label{sec:z_xxx}

We know that 
\begin{align*}
\psi_x (\p_\theta f\cir\psi) & = \bigg( \tfrac{1}{2} \psi_x \p_\theta k\cir\psi + \psi_x (c^{-1} \p_\theta c)\cir\psi - 3 \int^t_{-\varepsilon} \! \! \psi_x \p^2_\theta z\cir\psi \: d\tau \bigg) f\cir \psi + J_t c\cir\psi \p_x \bigg( J^{-1}_t (c^{-1}f)\cir\psi \bigg). \notag
\end{align*}
Recall from our Duhamel formula for $f$ that
\begin{align}
\label{f:duhamel:2}
&J^{-1}_t (c^{-1}f)\cir\psi \notag\\
& = c^{-1}_0 (z''_0 - \tfrac{1}{2}c^{-1}_0 c'_0 z'_0  + \tfrac{1}{2} c_0 k''_0)e^{-\frac{1}{2}k_0} \notag \\
& + \int^t_{-\varepsilon} \! \!J^{-1}_t (\big[ \tfrac{1}{12} \p_\theta k^2 - \tfrac{1}{4} c^{-2} \p_\theta z^2 - \tfrac{1}{24} \mathcal E - 2 \p_\theta a c^{-2} z + \tfrac{8}{3} c^{-1} z + \tfrac{16}{3} c^{-2} z^2 \big] \p_\theta w ) \cir\psi \: d\tau \notag \\
& + \int^t_{-\varepsilon} \! \!  J^{-1}_t ( \tfrac{1}{12} c \p_\theta k \mathcal E - \tfrac{1}{12} \p_\theta k^2 \p_\theta z + \tfrac{1}{4} c^{-2} \p_\theta z^3 + \tfrac{1}{8} \mathcal E \p_\theta z) \cir\psi \: d\tau \notag \\
& + \int^t_{-\varepsilon} \! \! J^{-1}_t (2 \p_\theta a c^{-2} \p_\theta z z - 8 c^{-1} \p_\theta z z - \tfrac{16}{3} c^{-2} \p_\theta z z^2 - 4 \p_\theta a c^{-1} \p_\theta z + \tfrac{2}{3} c\p_\theta \varpi e^{-k} z - \tfrac{2}{3} c\p_\theta k \varpi e^{-k} z) \cir\psi.
\end{align}

Taking $\p_x$ of \eqref{f:duhamel:2} and using \eqref{bd:trans:1.1.1} and \eqref{E:bound:2} we have
\begin{align}
J_t c\cir\psi \p_x \bigg( J^{-1}_t (c^{-1}f)\cir\psi \bigg) & = \psi_x( -\tfrac{1}{8} \p_\theta c\p_\theta k^2 + \tfrac{3}{8} c^{-2}\p_\theta c \p_\theta z^2 + \tfrac{1}{16}\mathcal E \p_\theta c)\cir\psi \notag \\
& + \psi_x( 3 \p_\theta ac^{-2} \p_\theta c z - 4 c^{-1} \p_\theta c z - 8 c^{-2} \p_\theta c z^2 ) \cir\psi  \notag \\
& + \mathcal O(\varepsilon^{\beta_3\wedge \gamma_3 \wedge \beta_2 -1 \wedge \gamma_2 -1 \wedge \beta_1 - 2} + \varepsilon^{\beta_1}|w''_0| + \varepsilon^{\beta_1+\gamma_1+1} \|w''_0\|_{L^\infty} ). 
\end{align}
Therefore
\begin{align}
\label{f_x:formula}
\p_\theta f & =  c^{-1} \p_\theta c f -\tfrac{1}{8} \p_\theta c\p_\theta k^2 + \tfrac{3}{8} c^{-2}\p_\theta c \p_\theta z^2 + \tfrac{1}{16}\mathcal E \p_\theta c \notag \\
& +  3 \p_\theta ac^{-2} \p_\theta c z - 4 c^{-1} \p_\theta c z - 8 c^{-2} \p_\theta c z^2  \notag \\
& + \mathcal O(\varepsilon^{\beta_3\wedge \gamma_3 \wedge \beta_2 -1 \wedge \gamma_2 -1 \wedge \beta_1 - 2} + \varepsilon^{\beta_1}|w''_0\cir\psi^{-1}| + \varepsilon^{\beta_1+\gamma_1+1} \|w''_0\|_{L^\infty} ).
\end{align}
Taking $\p_\theta$ of \eqref{z_x:trans} and using these bounds, we conclude
\begin{align}
\p^3_\theta z &= [\tfrac{1}{2} \p_\theta z-\tfrac{1}{2} c\p_\theta k] c^{-1} \p^2_\theta c \notag\\
&\qquad + \mathcal O \big(  \varepsilon^{\beta_3 \wedge \gamma_3 \wedge \beta_2 -1 \wedge \gamma_2 -1 \wedge \beta_1-2}  + \varepsilon^{\beta_2\wedge \gamma_2 \wedge \beta_1-1} |\p_\theta w| + \varepsilon^{\beta_1} |w''_0 \cir\psi^{-1}| + \varepsilon^{\beta_1} |\p_\theta w|^2 \big). 
\label{z_xxx:bound}
\end{align}
Since \eqref{f_t:formula} can be rewritten as
\begin{align*}
-\tfrac{3}{2}\p_t(f\cir\psi) & = ([f - \tfrac{1}{8} c \p_\theta k^2 + \tfrac{3}{8} c^{-1} \p_\theta z^2 + \tfrac{1}{16} c \mathcal E + 3 \p_\theta a c^{-1} z - 4z - 8 c^{-1} z^2] \p_\theta w)\cir\psi \notag \\
& +  ( (4a + \tfrac{1}{2} c\p_\theta k + \tfrac{9}{2} \p_\theta z)f + \tfrac{1}{8} c \p_\theta k^2 \p_\theta z - \tfrac{1}{8} c^2 \p_\theta k \mathcal E - \tfrac{3}{8} c^{-1} \p_\theta z^3)\cir\psi \notag \\
& + (-\tfrac{3}{16} c \p_\theta z \mathcal E - (3 \p_\theta a c^{-1} z -4z - 8 c^{-1} z^2) \p_\theta z + 6 \p_\theta a \p_\theta z) \cir\psi \notag \\
& + ( 8z \p_\theta z + c^2 \p_\theta \varpi e^{-k} z + c^2 \p_\theta \varpi e^{-k} z) \cir\psi. 
\end{align*}
we can also conclude that $f$ is transversal, and therefore $\p_\theta z$ is 2-transversal.

\section{Fourth derivative estimates}
\label{sec:4DE}

\subsection{$\p^4_\theta k$ bounds}
\label{sec:k_xxxx}

Taking $\p_x$ of $\tilde{\mathcal E}\cir\phi$ and using \eqref{k_x:formula} and \eqref{Phi_xx:bound} gives us
\begin{align}
\p_x( \tilde{\mathcal E}\cir\phi) & =  \phi^{-3}_x \big( \p^4_x k_0 - 6\Phi k'''_0 - (4\p_x\Phi - 11\Phi^2)k''_0 - (\p^2_x\Phi - 7 \p_x\Phi \Phi + 6\Phi^3) k'_0 \big)  + 6 \phi_x(\tilde{\mathcal E} c^{-1} \p_\theta c)\cir\phi \notag \\
& = \phi^{-3}_x \big( \p^4_x k_0 - 6\Phi k'''_0 - (4\p_x\Phi - 11\Phi^2)k''_0 - ( 2\tfrac{c^2_0 c_0'''-3 c_0c_0'c_0''+ 2(c'_0)^3}{c_0^3}- 7 \p_x\Phi \Phi + 6\Phi^3) k'_0 \big) \notag \\
& + \phi_x \big( \big[\p_\theta k [ 16+ 32 c^{-1}z] + 6 \tilde{\mathcal E} \big] c^{-1} \p_\theta c\big)\cir\phi + \mathcal O\big( \varepsilon^{\gamma_1-1} + \varepsilon^{\gamma_1+1}|w''_0|\big). \notag \\
\implies \p_\theta \tilde{\mathcal E} & = \big[\phi^{-4}_x \big( \p^4_x k_0 - 6\Phi k'''_0 - (4\p_x\Phi - 11\Phi^2)k''_0 - ( 2\tfrac{c^2_0 c_0'''-3 c_0c_0'c_0''+ 2(c'_0)^3}{c_0^3}- 7 \p_x\Phi \Phi + 6\Phi^3) k'_0 \big)\big] \cir\phi^{-1} \notag \\
& + \big[\p_\theta k [ 16+ 32 c^{-1}z] + 6 \tilde{\mathcal E} \big] c^{-1} \p_\theta c + \mathcal O\big( \varepsilon^{\gamma_1-1} + \varepsilon^{\gamma_1+1}|w''_0|\big). \notag 
\end{align}

Define
\begin{equation}
\label{E:definition:3}
\hat{\mathcal E}  = \p_\theta \tilde{\mathcal E} - \big[\p_\theta k [ 16+ 32 c^{-1}z] + 6 \tilde{\mathcal E} \big] c^{-1} \p_\theta c.
\end{equation}
Then \eqref{Phi:bound} and \eqref{Phi_x:bound} tell us that 
\begin{align*}
|\hat{\mathcal E}| & \lesssim \varepsilon^{\gamma_4\wedge \gamma_3-1\wedge \gamma_2-2 \wedge \gamma_1-3} + \varepsilon^{\gamma_2 \wedge \gamma_1-1} |w''_0\cir\phi^{-1}| + \varepsilon^{\gamma_1} |w'''_0\cir\phi^{-1}|.
\end{align*}

Using \eqref{a_xx:phidentity} one can compute that
\begin{align}
-3\p_t(\tilde{\mathcal E} \cir\phi) & = (6 \tilde{\mathcal E} \p_\theta w + 6 \tilde{\mathcal E} \p_\theta z) \cir\phi - 16 \phi^{-3}_x k''_0 \p_x(a\cir\phi) \notag \\
& + 16 \phi^{-3}_x k'_0 \big(4 \Phi \p_x(a\cir\phi) - \p^2_x (a\cir\phi)  \big) \notag \\
& = \big( \big[\p_\theta k [ 16+ 32 c^{-1}z] + 6 \tilde{\mathcal E} \big] \p_\theta w + 6 \tilde{\mathcal E} \p_\theta z\big)\cir\phi + \mathcal O\big( \varepsilon^{\gamma_2 \wedge \gamma_1-1} \big).
\end{align}
so $\tilde{\mathcal E}$ is transversal, and therefore $\p_\theta k$ is 3-transversal. This will be used in \S~\ref{sec:5DE}.

Taking $\p_\theta$ of \eqref{k_xxx:formula} gives us
\begin{align}
\label{k_xxxx:identity}
\p^4_\theta k & = \hat{\mathcal E} + \big[ 16 +32 c^{-1}z\big] c^{-1} \p_\theta c \p_\theta k + 12 \tilde{\mathcal E} c^{-1} \p_\theta c \notag \\
& + 20 \mathcal E c^{-2} \p_\theta c^2 + 8 \mathcal E c^{-1} \p_\theta^2 c + 6 c^{-2} \p_\theta c \p^2_\theta c \p_\theta k + 2 c^{-1} \p^3_\theta c \p_\theta k.
\end{align}
Note that the terms of order $|\p_\theta w|^3$ happen to cancel when this computation is done.

It follows from \eqref{E:bound}, \eqref{z_xx:bound}, \eqref{E:bound:2},  and \eqref{z_xxx:bound} that
\begin{align}
\label{k_xxxx:bound}
|\p^4_\theta k| & \lesssim  \varepsilon^{\gamma_4 \wedge \gamma_3-1 \wedge \gamma_2 -1 \wedge \gamma_1-3}  + \varepsilon^{\gamma_3 \wedge \gamma_2-1 \wedge \gamma_1-2} |\p_\theta w| \notag \\
& + \varepsilon^{\gamma_2 \wedge \gamma_1-1} \big( |w''_0\cir\phi^{-1}| + |\p_\theta w|^2 + |\p^2_\theta w| \big) \notag \\
& + \varepsilon^{\gamma_1}\big( |w'''_0\cir\phi^{-1}| + |w''_0\cir\phi^{-1}| |\p_\theta w|+ |\p_\theta w| |\p^2_\theta w| + |\p^3_\theta w| \big).
\end{align}

\subsection{$\p^4_\theta a$ bounds}
\label{sec:a_xxxx}

At this point, we can apply Lemma \ref{lem:trans:3} to conclude that the variable $Q_2$ defined in \eqref{phi_xx:formula:2} is transversal. In fact, Lemma \ref{lem:trans:3} allows us to conclude that $Q_2$ is 2-transversal, but we will not need to use that until \S~\ref{sec:5DE}.

Recall from \S~\ref{sec:a_xxx} that 
$$ \begin{cases} \p^2_\theta \varpi = A c^{-1} \p_\theta c + B \\ -3\p_t( \p_\theta \varpi \cir\phi) = (A \p_\theta w + C)\cir\phi \end{cases}  $$
where $A,B, C$ are defined by \eqref{def:A}, \eqref{def:B}, and \eqref{def:C}. Since $c, \p_\theta k, \p_\theta a, \p_\theta z, \p_\theta \varpi, \mathcal E$ are all transversal, it is immediate that $A$ and $C$ are transversal. We now also know that $f$ and $Q_2$ are transversal, so Lemma \ref{lem:trans:3} lets us conclude that $B$ is transversal. So $\p_\theta \varpi$ is 2-transversal. This fact will be utilized in \S~\ref{sec:z_xxxx}.

It is immediate from \eqref{def:A} that
\begin{align*}
\p_\theta^3 \varpi & = \p_\theta A c^{-1} \p_\theta c - A c^{-2} \p_\theta c^2 + A c^{-1} \p^2_\theta c + \p_\theta B.
\end{align*}
Since
\begin{align*}
\p_\theta A = [2A - 4c^{-1} e^k \p_\theta k] c^{-1} \p_\theta c + 2B- 4c^{-1} e^k \p_\theta k^2 - 4 c^{-1} e^k \mathcal E,
\end{align*}
we conclude that
\begin{align}
\label{eq:vorticity_xxx}
\p^3_\theta \varpi & = Ac^{-1} \p^2_\theta c + [A-4c^{-1} e^k \p_\theta k] c^{-2} \p_\theta c^2 + [2B-4c^{-1} e^k \p_\theta k^2 -4 c^{-1} e^k \mathcal E] c^{-1} \p_\theta c + \p_\theta B
\end{align}
So to estimate $\p^3_\theta \varpi$, all that remains is to estimate $\p_\theta B$.

We know from \eqref{eq:a_xxx:int} that
\begin{align}
\p^3_x \I_t & = \phi^3_x ( [3 + 6c^{-1} z] c^{-1}\p_\theta c)\cir\phi + \mathcal O\big( \varepsilon^{-1} + \varepsilon |w''_0| \big).
\end{align}
We know from \eqref{eq:trans:2.1.1} that 
\begin{equation*}
\p_x \bigg( \int^t_{-\varepsilon} \! \!  \I^{-3}_\tau \big(\big[ c^{-4} \p_\theta z + c^{-3} \p_\theta k \big] \p_\theta w\big) \cir\phi \: d\tau\bigg) = -3 \phi_x( [c^{-4} \p_\theta z + c^{-3} \p_\theta k] c^{-1} \p_\theta c) \cir\phi + \mathcal O( \varepsilon^{\beta_2+1 \wedge \gamma_2+1 \wedge \beta_1} ).
\end{equation*}
It is straightforward to compute that
\begin{align*}
\p_\theta Q_2 = 2Q_2 c^{-1} \p_\theta c + \mathcal O( \varepsilon^{-2} +  |w''_0\cir\phi^{-1}|).
\end{align*}
Taking $\p^3_x$ of \eqref{vorticity_0:formula} produces
\begin{align}
\label{vorticity_0xxx:bound}
|\varpi'''_0| & \lesssim \varepsilon^{-3} + \varepsilon^{-1} |w''_0| + |w'''_0|.
\end{align}
Therefore, taking $\p_x$ of \eqref{def:B} and using also \eqref{E:bound:2} and \eqref{f_x:formula}, we conclude that
\begin{align}
\label{B_x:bound}
|\p_\theta B| & \lesssim \varepsilon^{-3} + \varepsilon^{-1} |w''_0\cir\phi^{-1}| + |w'''_0\cir\phi^{-1}| + \big( \varepsilon^{-2} + |w''_0\cir\phi^{-1}| \big) |\p_\theta w|.
\end{align}
Therefore,
\begin{align}
\label{vorticity_xxx:bound}
|\p^3_\theta \varpi| & \lesssim \varepsilon^{-3} + \varepsilon^{-1} |w''_0\cir\phi^{-1}| + |w'''_0\cir\phi^{-1}| \notag \\
& + \big( \varepsilon^{-2} + |w''_0\cir\phi^{-1}| \big) |\p_\theta w| + \varepsilon^{-1}|\p_\theta w|^2 + \varepsilon^{-1} |\p^2_\theta w|.
\end{align}
Now, taking $\p^3_\theta$ of \eqref{a_x:formula} and using \eqref{k_xx:bound}, \eqref{vorticity_x:bound}, \eqref{k_xxx:bound},\eqref{vorticity_xx:bound},\eqref{z_xxx:bound}, and \eqref{vorticity_xxx:bound} gives us
\begin{align}
\label{a_xxxx:bound}
|\p^4_\theta a| & \lesssim \varepsilon^{-3} + \varepsilon^{-1} |w''_0\cir\phi^{-1}| + |w'''_0\cir\phi^{-1}| \notag \\
& + \big( \varepsilon^{-2} + |w''_0\cir\phi^{-1}| \big) |\p_\theta w| + \varepsilon^{-1}|\p_\theta w|^2 + \varepsilon^{-1} |\p^2_\theta w| \notag \\
& + |\p_\theta w| |\p^2_\theta w| + |\p^3_\theta w|.
\end{align}

\subsection{$\p^4_\theta z$ bounds}
\label{sec:z_xxxx}

Abusing notation, introduce a function $J$ defined so that
$$ J\cir\psi(x,t) = J_t(x). $$ 
Lemma \ref{lem:trans:3} implies that $J$ is 2-transversal. Using the new function $J$,  we can rewrite \eqref{f:duhamel:2} as
\begin{align*}
(J^{-1}c^{-1}f)\cir\psi & = c^{-1}_0 (z''_0 - \tfrac{1}{2}c^{-1}_0 c'_0 z'_0  + \tfrac{1}{2} c_0 k''_0)e^{-\frac{1}{2}k_0}   + \int^t_{-\varepsilon} \! \! (Jh_1\p_\theta w)\cir\psi \: d\tau + \int^t_{-\varepsilon} \! \! (Jh_2)\cir\psi \: d\tau
\end{align*} 
Given everything that has been proven up to this point $h_1$ and $h_2$ are both 2-transversal. It follows from Lemma \ref{lem:trans:3} that $J^{-1} c^{-1} f$ is 2-transversal, and since $c$ and $J$ are both 2-transversal it thus follows that $f$ is 2-transversal. This will be utilized in \S~\ref{sec:5DE}. 

Using Lemma \ref{lem:trans:1.2} on the functions $h_1$ and $h_2$,  along with estimates from the previous sections, we conclude that
\begin{align*}
\bigg\vert \p^2_x \bigg( \int^t_{-\varepsilon} \! \! (Jh_1\p_\theta w)\cir\psi \: d\tau + \int^t_{-\varepsilon} \! \! (Jh_2)\cir\psi \: d\tau \bigg) \bigg\vert & \lesssim \varepsilon^{\mu-4} + \varepsilon^{\mu-3} |\p_\theta w\cir\psi| + \varepsilon^{\mu-2} |\p_\theta w\cir\psi|^2 + \varepsilon^{\mu-2} |\p^2_\theta w\cir\psi|.
\end{align*}
It therefore follows that
\begin{align*}
\bigg\vert \p^2_x\bigg( (J^{-1}c^{-1}f)\cir\psi \bigg) \bigg\vert &  \lesssim \varepsilon^{\mu-5} + \varepsilon^{\mu-3} |\p_\theta w\cir\psi| + \varepsilon^{\mu-2} |\p_\theta w\cir\psi|^2 + \varepsilon^{\mu-2} |\p^2_\theta w\cir\psi|.
\end{align*}
We conclude that
\begin{align*}
|\p^2_\theta f| &  \lesssim \varepsilon^{\mu-5} + \varepsilon^{\mu-\frac{7}{2}} |\p_\theta w| + \varepsilon^{\mu-2} |\p_\theta w|^2 + \varepsilon^{\mu-2} |\p^2_\theta w|.
\end{align*}
It now follows that
\begin{align}
\label{z_xxxx:bound}
|\p^4_\theta z| & \lesssim \varepsilon^{\mu-5} + \varepsilon^{\mu-\frac{7}{2}} |\p_\theta w| + \varepsilon^{\mu-2} |\p_\theta w|^2 + \varepsilon^{\mu-2} |\p^2_\theta w| \notag \\
& + \varepsilon^{\beta_1} \big( |\p_\theta w|^3 + |\p_\theta w| |\p^2_\theta w| + |\p^3_\theta w|\big).
\end{align} 


\section{Fifth Order Estimates}
\label{sec:5DE}

\subsection{$\p^5_\theta k$ bounds}
\label{sec:k_xxxxx}

We already know (see \S~\ref{sec:k_xxxx}) that $\p_\theta k$ is 3-transversal, and we will not need to show that $\p_\theta k$ is 4-transversal. $\p_\theta k$ is 4-transversal, but it doesn't matter for our purposes. One can easily get the bound
\begin{align*}
|\p_\theta \hat{\mathcal E}| & \lesssim \varepsilon^{\gamma_5 \wedge \gamma_4-1 \wedge \gamma_3-2 \wedge \gamma_2-3\wedge \gamma_1-4}  +\varepsilon^{\gamma_3 \wedge \gamma_2-1 \wedge \gamma_1-2}|w''_0\cir\phi^{-1}| + \varepsilon^{\gamma_2 \wedge \gamma_1-1} |w'''_0\cir\phi^{-1}| \notag \\
& + \varepsilon^{\gamma_1} (|w''_0\cir\phi^{-1}|^2 + |\p^4_xw_0\cir\phi^{-1}|)    + ( \varepsilon^{\gamma_4 \wedge \gamma_3-1 \wedge \gamma_2-2 \wedge \gamma_1-3} + \varepsilon^{\gamma_2\wedge \gamma_1-1} |w''_0\cir\phi^{-1}| + \varepsilon^{\gamma_1} |w'''_0\cir\phi^{-1}| ) |\p_\theta w| \notag \\
\end{align*}
It now follows from taking $\p_\theta$ of \eqref{k_xxxx:identity} that
\begin{align}
\p^5_\theta k & = 2c^{-1} \p^4_\theta c \p_\theta k + \mathcal O\big( \varepsilon^{\gamma_5 \wedge \gamma_4-1 \wedge \gamma_3 -\frac{5}{2} \wedge \gamma_2-4 \wedge \gamma_1-\frac{11}{2}} + \varepsilon^{\gamma_4 \wedge\gamma_3-1 \wedge \gamma_2-\frac{5}{2} \wedge \gamma_1-4} |\p_\theta w| \notag \\
& \hspace{35mm} + \varepsilon^{\gamma_3 \wedge \gamma_2-1 \wedge \gamma_1 - \frac{5}{2}} ( |\p_\theta w|^2 + |\p_\theta^2 w|) \notag \\
& \hspace{35mm} + \varepsilon^{\gamma_2\wedge \gamma_1-1} (|\p_\theta w|^3 + |\p_\theta w| |\p_\theta^2 w| + |\p^3_\theta w|) \notag \\
& \hspace{35mm} + \varepsilon^{\gamma_1} ( |\p_\theta w|^2 |\p^2_\theta w| + |\p^2_\theta w|^2 + |\p_\theta w| |\p^3_\theta w|) \hspace{3mm} \big) .
\end{align}

\subsection{$\p^5_\theta a$ bounds}
\label{sec:a_xxxxx}

Since $c, \p_\theta k, \p_\theta a, \p_\theta z, \p_\theta \varpi,$ and $\mathcal E$ are all 2-transversal, it follows immediately that $A$ and $C$ are 2-transversal. $f$ and $Q_2$ are also 2-transversal, so it follows from Lemma ~\ref{lem:trans:3} that $B$ is 2-transversal. Therefore $\p_\theta \varpi$ is 3-transversal. This will be used in \S~\ref{sec:z_xxxxx}.

Taking $\p_\theta$ of \eqref{eq:vorticity_xxx} and using the bounds we already have on $A,B,C, \p_\theta A$, and $\p_\theta B$ gives us
\begin{align*}
|\p^4_\theta \varpi| & \lesssim |\p^2_\theta B| + \varepsilon^{-4} |\p_\theta w| + \varepsilon^{-5/2}(|\p_\theta w|^2 + |\p^2_\theta w| ) + \varepsilon^{-1}( |\p_\theta w|^3 + |\p_\theta w| |\p^2_\theta w| + |\p^3_\theta w|).
\end{align*}
$\p^2_\theta B$ can be bounded in a manner similar to the way $\p_\theta B$ was bounded. One simply needs to use a lemma similar to Lemma~\ref{lem:trans:1.2} but for the 2-characteristics, which is very straightforward to prove at this point. Then, since $\p^2_\theta Q$ and $\p^4_x \I_t$ can be explicitly computed and bounded \footnote{One must write out the full equation for \eqref{eq:a_xxx:int} in order to do this, which is arduous but straightforward.}, one can bound $\p^2_\theta B$ and conclude that
\begin{align*}
|\p^4_\theta \varpi| & \lesssim \varepsilon^{-11/2} + \varepsilon^{-4}|\p_\theta w| + \varepsilon^{-5/2}(|\p_\theta w|^2 + |\p^2_\theta w|) + \varepsilon^{-1}( |\p_\theta w|^3 + |\p_\theta w| |\p_\theta^2 w| + |\p^3_\theta w|).
\end{align*}
From here, taking $\p^3_\theta$ of \eqref{a_x:trans} gives
\begin{align}
\p^5_\theta a & = 2[\p_\theta a-c-2z]c^{-1} \p^4_\theta c + \mathcal O\big( \varepsilon^{-11/2} + \varepsilon^{-4}|\p_\theta w| + \varepsilon^{-5/2}(|\p_\theta w|^2 + |\p^2_\theta w|) \notag \\
& \hspace{50mm} + \varepsilon^{-1}( |\p_\theta w|^3 + |\p_\theta w| |\p_\theta^2 w| + |\p^3_\theta w|) \notag \\
& \hspace{50mm} + |\p_\theta w| |\p^3_\theta w| + |\p_\theta w|^2 |\p^2_\theta w| + |\p^2_\theta w|^2 \hspace{3mm} \big).
\end{align}

\subsection{$\p^5_\theta z$ bounds}
\label{sec:z_xxxxx}

One can use Lemma ~\ref{lem:trans:1.1}, Lemma ~\ref{lem:trans:1.2}, and Lemma ~\ref{lem:trans:3} to derive a lemma for 3-transversal functions analogous to Lemma ~\ref{lem:trans:1.2}. Bounding $\p^5_\theta z$ now follows in a manner completely analogous to \S~\ref{sec:z_xxxx}. One obtains
\begin{align}
\p^5_\theta z & = [\tfrac{1}{2} \p_\theta z - \tfrac{1}{2} c\p_\theta k] c^{-1} \p^4_\theta c +\mathcal O\big( \varepsilon^{\mu-13/2} + \varepsilon^{\mu-5}|\p_\theta w| + \varepsilon^{\mu-7/2}(|\p_\theta w|^2 + |\p^2_\theta w|) \notag \\
& \hspace{50mm} + \varepsilon^{\mu-2}( |\p_\theta w|^3 + |\p_\theta w| |\p_\theta^2 w| + |\p^3_\theta w|) \notag \\
& \hspace{50mm} \varepsilon^{\beta_1}( |\p_\theta w|^4 + |\p_\theta w| |\p^3_\theta w| + |\p_\theta w|^2 |\p^2_\theta w| + |\p^2_\theta w|^2) \hspace{3mm} \big).
\end{align}


\section{Estimates along $\eta$}
\label{sec:3char}

\subsection{Second derivative estimates $\eta$}
\label{sec:eta_xx}

It follows from the first derivative estimates that 
\begin{equation*}
|\p_xI_t| \lesssim \varepsilon^{\gamma_1}
\end{equation*}
where $I_t$ is the integrating factor in \eqref{eq:I_t}. It follows from the second derivative estimates that
\begin{align*}
\eta_x |\p^2_\theta k\cir \eta| & \lesssim \varepsilon^{\gamma_2 \wedge \gamma_1-1} \\
\eta_x |\p^2_\theta a\cir\eta| & \lesssim \varepsilon^{-1} \\
\eta_x |\p^2_\theta z\cir\eta| & \lesssim \varepsilon^{\beta_2 \wedge \gamma_2 \wedge \beta_1-1}
\end{align*}
Taking $\p_x$ of \eqref{q^w:equation} and using these bounds, we get
\begin{align*}
|\p_x\big( \eta_x q^w\cir\eta)| & \leq |w''_0| (1+\mathcal O( \varepsilon)) + \mathcal O(\varepsilon^{\gamma_2\wedge\gamma_1-1}) + \mathcal O(\varepsilon^{0 \wedge \beta_1+\gamma_1}) (\varepsilon + t) \sup_{[-\varepsilon, t]} \eta_{xx}
\end{align*}
Taking $\p_x$ of \eqref{eta_x:equation} and plugging in this estimate gives us
\begin{align}
\label{eta_xx:bound:1}
\sup_{[-\varepsilon, t]} |\eta_{xx}| & \leq (\varepsilon + t) |w''_0| (1 + \mathcal O(\varepsilon)) + \mathcal O(\varepsilon^{\beta_2\wedge \gamma_2 \wedge \beta_1-1}) )(\varepsilon+t) + \mathcal O(\varepsilon^{\beta_1}) (\varepsilon+t) \sup_{[-\varepsilon, t]} |\eta_{xx}|. \notag \\
\implies \sup_{[-\varepsilon, t]} |\eta_{xx}| & \leq \frac{ (\varepsilon + t)\big[ |w''_0| (1+\mathcal O(\varepsilon)) + \mathcal O(\varepsilon^{\beta_2 \wedge \gamma_2 \wedge \beta_1-1})\big]}{1-\mathcal O(\varepsilon^\mu)} \notag \\
& \lesssim (\varepsilon + t) (|w''_0| + \varepsilon^{\beta_2\wedge \gamma_2 \wedge \beta_1-1}).
\end{align}
It follows that
\begin{equation}
\label{eta_xx:bound:2}
 |\eta_{xx}(x,t)| \leq  \bound{\varepsilon^{-2}(\varepsilon+t)} {\varepsilon^{-5/2}(\eps+t)} .
\end{equation}
Plugging \eqref{eta_xx:bound:1} into our bound for $|\p_x(\eta_x q^w\cir\eta)|$ gives us
\begin{align}
|\p_x\big( \eta_x q^w\cir\eta)| & \leq |w''_0| (1+\mathcal O( \varepsilon)) + \mathcal O(\varepsilon^{\gamma_2\wedge\gamma_1-1})
\end{align}

Using the $\eta_{xx}$ bound along with our second derivative bounds, we get
\begin{align}
\label{eta:k_xx:bound}
|\p^2_x(k\cir\eta)| & \lesssim \varepsilon^{\gamma_2 \wedge \gamma_1-1} + \varepsilon^{\gamma_1}(\varepsilon + t) |w''_0| \\
|\p^2_x(a\cir\eta)| & \lesssim \varepsilon^{-1} + (\varepsilon + t) |w''_0| \\
|\p^2_x(z\cir\eta)| & \lesssim \varepsilon^{\beta_2 \wedge \gamma_2 \wedge \beta_1-1} + \varepsilon^{\beta_1} (\varepsilon + t) |w''_0| \\
|\p^2_x(\varpi\cir\eta)| & \lesssim \eps^{-\frac{5}{2}} + \eps^{-1}(\eps+t)|w''_0|.
\end{align}

Since
\begin{equation*}
\p^2_xI_t = \bigg( \tfrac{1}{8} \p^2_x(k\cir\eta) - \tfrac{8}{3} \int^t_{-\varepsilon} \! \! \p^2_x(a\cir\eta) \: d\tau \bigg) I_t + \bigg( \tfrac{1}{8} \p_x(k\cir\eta) - \tfrac{8}{3} \int^t_{-\varepsilon} \! \! \p_x(a\cir\eta) \: d\tau \bigg)^2 I_t,
\end{equation*}
it now follows that
\begin{align*}
|\p^2_x I_t| & \lesssim \varepsilon^{\gamma_2 \wedge \gamma_1-1} + \varepsilon^{\gamma_1}(\varepsilon+t)|w''_0|.
\end{align*}

Lastly, since
$$ \eta_xq^w\cir\eta = \p_x(w\cir\eta) - \tfrac{1}{4} c\cir\eta \p_x(k\cir\eta), $$
we know that
\begin{align*}
\p^2_x(w\cir\eta) & = \p_x(\eta_x q^w\cir\eta) + \tfrac{1}{4} \p_x(c\cir\eta) \p_x(k\cir\eta) + \tfrac{1}{4} c\cir\eta \p^2_x(k\cir\eta),
\end{align*}
and therefore \eqref{eta_xx:bound:1} and \eqref{eta:k_xx:bound} imply that
\begin{align}
|\p^2_x(w\cir\eta)| & \lesssim |w''_0| + \varepsilon^{\gamma_2 \wedge \gamma_1-1}.
\end{align}
Since
\begin{align*}
\eta^2_x \p^2_\theta w\cir\eta & = \p^2_x(w\cir\eta) - \eta_{xx} \p_\theta w\cir\eta,
\end{align*}
it follows that
\begin{align}
\label{eta_x:w_xx:bound}
\eta^2_x |\p^2_\theta w\cir\eta| & \lesssim |w''_0| + \varepsilon^{\gamma_2 \wedge \gamma_1-1}+  \varepsilon^{-1} \tfrac{|\eta_{xx}|}{\eta_x}. \\
\implies \eta^3_x |\p^2_\theta w\cir\eta| & \lesssim |w''_0| + \varepsilon^{\beta_2\wedge \gamma_2 \wedge \beta_1-1} . 
\end{align}

\subsection{Third derivative estimates along $\eta$}

Using the third derivative estimates and \eqref{eta_x:w_xx:bound} gives us
\begin{align*}
\eta^2_x|\p^3_\theta k\cir\eta| &  \lesssim \varepsilon^{\gamma_3 \wedge \gamma_2-1 \wedge \gamma_1-2} +\varepsilon^{\gamma_1} |w''_0| + \varepsilon^{\gamma_1} |w''_0 \cir\phi^{-1} \cir\eta| + \varepsilon^{\gamma_1-1} \tfrac{|\eta_{xx}|}{\eta_x} \\
\eta^2_x |\p^3_\theta a\cir\eta| & \lesssim \varepsilon^{-2} + |w''_0| + |w''_0\cir\phi^{-1}\cir\eta| + \varepsilon^{-1} \tfrac{|\eta_{xx}|}{\eta_x}. \\
\eta^2_x |\p^3_\theta z\cir\eta| & \lesssim \varepsilon^{\beta_3 \wedge \gamma_3 \wedge \beta_2-1 \wedge \gamma_2-1 \wedge \beta_1-2} + \varepsilon^{\beta_1} |w''_0| + \varepsilon^{\beta_1} |w''_0\cir\psi^{-1} \cir\eta| + \varepsilon^{\beta_1-1} \tfrac{|\eta_{xx}|}{\eta_x} \\
\eta^2_x |\p^3_\theta \varpi \cir \eta| & \lesssim \eps^{-3} + \eps^{-1}|w''_0| + \eps^{-1} |w''_0 \cir \phi^{-1} \cir \eta| + |w'''_0 \cir \phi^{-1} \cir \eta| + \eps^{-2} \tfrac{|\eta_{xx}|}{\eta_x}.
\end{align*}
These estimates will be useful in \S~\ref{sec:eta:4} and \S~\ref{sec:eta:5}.

Multiplying the above bounds by $\eta_x$ gives us
\begin{align*}
\eta^3_x|\p^3_\theta k\cir\eta| & \lesssim \varepsilon^{\gamma_3 \wedge \gamma_2 -1 \wedge \gamma_1-2} + \varepsilon^{\gamma_1} |w''_0| + \varepsilon^{\gamma_1} |w''_0 \cir\phi^{-1} \cir\eta| \\
\eta^3_x |\p^3_\theta a\cir\eta| & \lesssim \varepsilon^{-2} + |w''_0| + |w''_0 \cir\phi^{-1} \cir \eta| \\
\eta^3_x |\p^3_\theta z\cir\eta| & \lesssim \varepsilon^{\beta_3 \wedge \gamma_3 \wedge \beta_2-1\wedge \gamma_2-1 \wedge \beta_1-2} + \varepsilon^{\beta_1} |w''_0| + \varepsilon^{\beta_1} |w''_0\cir\psi^{-1} \cir\eta| \\
\eta^3_x |\p^3_\theta \varpi\cir\eta| & \lesssim \eps^{-3} + \eps^{-1}|w''_0| + \eps^{-1}|w''_0\cir\phi^{-1}\cir\eta| + |w'''_0 \cir \phi^{-1} \cir \eta|.
\end{align*}

Using this we compute that
\begin{align*}
\bigg\vert \p^2_x \big( \eta_x q^w\cir\eta\big) - w'''_0 e^{-\frac{1}{8}k_0} I_t \bigg\vert & \lesssim \varepsilon^{\gamma_1} |w''_0| + \varepsilon^{\gamma_3\wedge \gamma_2-1 \wedge \mu -2} + \varepsilon^{1 \wedge \beta_1+\gamma_1+1} ( |w''_0 \cir\psi^{-1} \cir \eta| + |w''_0 \cir \phi^{-1} \cir \eta| )  \notag \\
& + \sup_{[-\varepsilon, t]} |\eta_{xx}| \varepsilon^{0 \wedge \beta_2 +\gamma_1 + 1 \wedge \gamma_2 + \gamma_1 + 1 \wedge \beta_1 + \gamma_1} + \varepsilon^{0\wedge \beta_1+\gamma_1} (\varepsilon + t) \sup_{[-\varepsilon, t]} |\eta_{xxx}| \notag \\
& \lesssim \varepsilon^{\gamma_1} |w''_0| + \varepsilon^{-2 \wedge \gamma_3 \wedge \beta_2 + \gamma_1-1 \wedge \gamma_2-1 \wedge \beta_1+\gamma_1-2} + \varepsilon^{0\wedge \beta_1+\gamma_1} (\varepsilon + t) \sup_{[-\varepsilon, t]} |\eta_{xxx}|. \notag 
\end{align*}
This is true for all $x \in\mathbb{T}, t \in [-\varepsilon, T_*)$.

Taking $\p^2_x$ of \eqref{eta_x:equation} and using this bound tells us that
\begin{align*}
\sup_{[-\varepsilon, t]} |\eta_{xxx}| & \lesssim (\varepsilon+t) \big( |w'''_0| + \varepsilon^{\mu-4} + \varepsilon^{\beta_1} \sup_{[-\varepsilon,t]} |\eta_{xxx}| \big). \notag \\
\implies  |\eta_{xxx}| & \lesssim (\varepsilon + t) \big( |w'''_0| + \varepsilon^{\mu-4} \big) 
\end{align*}
everywhere. Using this bound, we conclude that
\begin{align*}
\bigg\vert \eta_{xxx} - w'''_0 \int^t_{-\varepsilon} \! \! e^{-\frac{1}{8} k_0}I_\tau \: d\tau \bigg\vert & \lesssim (\varepsilon +t)^2 \varepsilon^{\beta_1} |w'''_0| + (\varepsilon + t) \varepsilon^{\mu-4}.
\end{align*}
Since $w'''_0 \sim \varepsilon^{-4}$ for $|x| \leq \varepsilon^{3/2}$ and  $\|w'''_0\|_{L^\infty} \lesssim \varepsilon^{-4}$, this bound lets us conclude that
\begin{equation*}
\eta_{xxx} \sim (\varepsilon+t) \varepsilon^{-4} \hspace{10mm} \forall \: |x| \leq \varepsilon^{3/2},
\end{equation*}
and
\begin{equation*}
|\eta_{xxx}|  \lesssim \varepsilon^{-3} \hspace{10mm} \forall \: (x,t) \in \mathbb{T} \times [-\varepsilon, T_*].
\end{equation*}

We now conclude that
$$ \bigg\vert \p^2_x\big( \eta_x q^w\cir\eta\big) \bigg\vert \lesssim |w'''_0| + \varepsilon^{\mu} |w''_0| + \varepsilon^{-2 \wedge \gamma_3 \wedge \beta_2 + \gamma_1-1 \wedge \gamma_2-1 \wedge \beta_1+\gamma_1-2}  $$

We know that for all $(x,t) \in \mathbb{T} \times [-\varepsilon, T_*)$ we have
\begin{align*}
|\p^3_x(k\cir\eta)| & \lesssim ( \varepsilon^{\gamma_3 \wedge \gamma_2 -1 \wedge \gamma_1-2} + \varepsilon^{\gamma_1} |w''_0| + \varepsilon^{\gamma_1} |w''_0 \cir\phi^{-1} \cir\eta| ) \eta_x + \varepsilon^{\gamma_2 \wedge \gamma_1-1} |\eta_{xx}| + \varepsilon^{\gamma_1} |\eta_{xxx}|. \\
|\p^3_x(a\cir\eta)| & \lesssim (\varepsilon^{-2} + |w''_0| + |w''_0 \cir\phi^{-1} \cir \eta|) \eta_x + \varepsilon^{-1} |\eta_{xx}| + |\eta_{xxx}|. \\
|\p^3_x(z\cir\eta)| & \lesssim ( \varepsilon^{\beta_3 \wedge \gamma_3 \wedge  \beta_2-1 \wedge \gamma_2-1 \wedge \beta_1-2} +\varepsilon^{\beta_1} |w''_0| + \varepsilon^{\beta_1} |w''_0 \cir \psi^{-1} \cir \eta| ) \eta_x + \varepsilon^{\beta_2 \wedge \gamma_2 \wedge \beta_1-1} |\eta_{xx}| + \varepsilon^{\beta_1} |\eta_{xxx}| \\
|\p^3_x(\varpi\cir\eta)| & \lesssim ( \eps^{-3} + \eps^{-1}|w''_0| + \eps^{-1}|w''_0\cir\phi^{-1}\cir\eta| + |w'''_0\cir\phi^{-1}\cir\eta|) \eta_x + \eps^{-2} |\eta_{xx}| + \eps^{-1}|\eta_{xxx}|.
\end{align*}

Therefore, we have the bounds
\begin{align*}
|\p^3_x(k\cir\eta)| & \lesssim \varepsilon^{\mu-3} \\
|\p^3_x(a\cir\eta)| & \lesssim \varepsilon^{-3} \\
|\p^3_x(z\cir\eta)| & \lesssim \varepsilon^{\mu-4} \\
|\p^3_x(\varpi\cir\eta)| & \lesssim \eps^{-4}.
\end{align*}
Since
\begin{align*}
\p^2_x(\eta_x q^w\cir\eta) & = \p^3_x(w\cir\eta) - \tfrac{1}{4} \p^2_x(c\cir\eta) \p_x(k\cir\eta) -\tfrac{1}{2} \p_x(c\cir\eta) \p^2_x(k\cir\eta) -\tfrac{1}{4} c\cir\eta \p^3_x (k\cir\eta).
\end{align*}
It follows that
\begin{align}
|\p^3_x(w\cir\eta)| & \lesssim \varepsilon^{-4}.
\end{align}

Lastly, it is easy to use the bounds on $\p^3_x(k\cir\eta)$ and $\p^3_x(a\cir\eta)$ to conclude that
$$ |\p^3_x I_t| \lesssim \varepsilon^{\mu-3}. $$

\subsection{Blowup time, location, and sharp bounds for $\eta_x$ and $\eta_{xx}$}
\label{sec:x_*}

\begin{lemma}[Existence and uniqueness of blowup label] 
\label{lem:x_*}
There exists a unique label $x_* \in \mathbb{T}$ such that $\eta_x(x_*, T_*) =0$. Furthermore, we have $|x_*| \lesssim \varepsilon^{\mu+2}$ and
$$ \eta_x(x_*,T_*) = \eta_{xx}(x_*, T_*)  = 0. $$
\end{lemma}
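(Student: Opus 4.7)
The plan is to establish the lemma in four short steps. First, by compactness of $\TT$ and continuity of $\eta_x$, together with the definition of $T_*$ as the first time at which $\min_x \eta_x$ vanishes, there exists a minimizer $x_* \in \TT$ with $\eta_x(x_*, T_*) = 0$. Since $\eta_x \geq 0$ everywhere on $\TT \times [-\eps, T_*]$, the point $x_*$ is an interior minimum of $\eta_x(\cdot, T_*)$, and the first-order necessary condition immediately forces $\eta_{xx}(x_*, T_*) = 0$. For the coarse localization, I would invoke \eqref{eta_x:lowerbound}, which yields $\eta_x(x, T_*) \gtrsim \eps^{\mu/2} > 0$ for all $|x| \geq \eps^{3/2}$, so every zero of $\eta_x(\cdot, T_*)$ lies in the interval $(-\eps^{3/2}, \eps^{3/2})$; in particular $|x_*| < \eps^{3/2}$.

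To sharpen the bound on $|x_*|$, I would differentiate \eqref{eta_x:equation} in $x$ and insert the second-order estimates from \S~\ref{sec:eta_xx}, namely the bounds on $\p_x(\eta_x q^w\cir\eta)$, $\p^2_x(k\cir\eta)$, and $\p^2_x(z\cir\eta)$, together with \eqref{I_t:bound}, to obtain an expansion of the form
\begin{align*}
\eta_{xx}(x, T_*) = w''_0(x)(\eps+T_*)(1+O(\eps^\mu)) + O(\eps^\mu).
\end{align*}
Evaluating at $x=x_*$ and using $\eps + T_* \sim \eps$ yields $|w''_0(x_*)| \lesssim \eps^{\mu-1}$. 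From the hypotheses $w''_0(0) = 0$ and $w'''_0 \sim \eps^{-4}$ on $[-\eps^{3/2}, \eps^{3/2}]$, the mean value theorem gives $|w''_0(x_*)| \gtrsim \eps^{-4} |x_*|$ (the higher-order Taylor remainder is controlled by $\|\p^4_x w_0\|_{L^\infty} x_*^2 \lesssim \eps^{\mu-5} x_*^2$, which is lower order for $|x_*|\ll \eps$). Combining these estimates yields $|x_*| \lesssim \eps^{\mu+3} \leq \eps^{\mu+2}$.

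For uniqueness, I would appeal to the third-derivative bound derived immediately preceding this lemma, namely $\eta_{xxx}(x,t) \sim (\eps+t)\eps^{-4}$ for $|x| \leq \eps^{3/2}$. At $t=T_*$ this gives $\eta_{xxx}(\cdot, T_*) \sim \eps^{-3} > 0$ throughout the entire region in which the minimum can lie. Hence $\eta_{xx}(\cdot, T_*)$ is strictly increasing on $(-\eps^{3/2}, \eps^{3/2})$, so it has at most one zero; equivalently, $\eta_x(\cdot, T_*)$ has at most one critical point there. This forces uniqueness of $x_*$ globally on $\TT$, since we have already localized every candidate to this interval.

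The main obstacle is controlling the error terms in the expansion of $\eta_{xx}$ sharply enough to extract the refined $\eps^{\mu+2}$ localization of $x_*$; this is what requires carefully tracking the $\eps^\mu$ gains established in \S~\ref{sec:eta_xx}, rather than using a cruder $O(\eps+t)$ estimate that would only yield $|x_*| \lesssim \eps^{3/2}$. The remaining ingredients are essentially soft, relying only on standard calculus (first-order conditions and strict monotonicity) once the quantitative estimates for $\eta_{xx}$ and $\eta_{xxx}$ are in hand.
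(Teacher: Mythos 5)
Your argument is essentially the paper's: the same three ingredients appear in the same roles --- the lower bound \eqref{eta_x:lowerbound} to localize all zeros of $\eta_x(\cdot,T_*)$ to $|x|<\eps^{3/2}$, the expansion of $\eta_{xx}$ from \S~\ref{sec:eta_xx} together with $w''_0(0)=0$, $w'''_0\sim\eps^{-4}$ to pin the critical point down to a much smaller neighborhood of the origin, and $\eta_{xxx}\sim(\eps+t)\eps^{-4}>0$ on $|x|\le\eps^{3/2}$ for uniqueness. The only structural difference is that you work directly at $t=T_*$ (zero of $\eta_x$ $\Rightarrow$ interior minimum $\Rightarrow$ $\eta_{xx}(x_*,T_*)=0$, then evaluate the $\eta_{xx}$ expansion at $x_*$), whereas the paper first constructs the curve $x_*(t)$ of zeros of $\eta_{xx}(\cdot,t)$, identifies it as the minimizer of $\eta_x(\cdot,t)$, and obtains $\eta_x(x_*,T_*)=0$ by letting $t\to T_*$; your version is a mild streamlining (the paper needs the curve $x_*(t)$ anyway for Lemma~\ref{lem:eta_x:improve}, but not for this lemma), and both hinge on the same facts ($\eta_x\ge 0$ up to $T_*$ and $\min\eta_x(\cdot,t)\to 0$), which the paper also invokes without further elaboration here.

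Two bookkeeping points. First, the additive error you claim, $O(\eps^\mu)$, is not what \S~\ref{sec:eta_xx} delivers: the estimate there is $\big|\eta_{xx}-w''_0\int_{-\eps}^t e^{-k_0/8}I_\tau\,d\tau\big|\lesssim(\eps+t)\big(\eps^{\beta_1+1}|w''_0|+\eps^{\beta_2\wedge\gamma_2\wedge\beta_1-1}\big)$, which after setting $t=T_*$ (so $\eps+T_*\sim\eps$) leaves an additive error of size $\eps\cdot\eps^{\mu-2}$ (the paper's figure for $|x|\le\eps^2$), not $\eps^{\mu}$; consequently you get $|w''_0(x_*)|\lesssim\eps^{\mu-2}$ and $|x_*|\lesssim\eps^{\mu+2}$, which is exactly the lemma's claim, while your stated $|x_*|\lesssim\eps^{\mu+3}$ is not supported by the available estimates (nor needed). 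Second, the form of the error you quote is the one restricted to $|x|\le\eps^2$, whereas a priori you only know $|x_*|<\eps^{3/2}$; either use the unrestricted bound at $x_*$ (the $\eps^{\beta_1+1}|w''_0(x_*)|$ term is a harmless relative correction), or, as the paper does, rule out $\eps^{2+\mu}\lesssim|x|\le\eps^2$ by the sign of $\eta_{xx}$ and extend to $\eps^2\le|x|\le\eps^{3/2}$ by the monotonicity coming from $\eta_{xxx}>0$. Neither point changes the structure of your proof; with the correct error size the argument closes and yields the stated conclusion.
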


\begin{proof}[Proof of Lemma ~\ref{lem:x_*}]
Due to \eqref{eta_x:lowerbound}, we know that $\eta_x$ is bound below outside of $(x,t) \in [-\varepsilon^{3/2}, \varepsilon^{3/2}] \times [-\varepsilon, T_*]$. We know that $\eta_{xxx} > 0$ in $[-\varepsilon^{3/2}, \varepsilon^{3/2}] \times (-\varepsilon, T_*]$, so for all $t \in (-\varepsilon, T_*]$ there is at most one zero of $\eta_{xx}(\cdot, t)$ in $(-\varepsilon^{3/2}, \varepsilon^{3/2}]$.

We know from \S~\ref{sec:eta_xx} that for all $(x,t) \in \mathbb{T} \times [-\varepsilon, T_*]$ we have
\begin{align*}
\big\vert \eta_{xx}(x,t) - w''_0(x) \int^t_{-\varepsilon} \! \! e^{-\frac{1}{8}k_0(x)} I_\tau(x) \: d\tau \big\vert & \lesssim (\varepsilon+t)\big( \varepsilon^{\beta_1+1} |w''_0(x)| +  \varepsilon^{\beta_2\wedge \gamma_2 \wedge \beta_1-1}\big).
\end{align*} 
Recall that $|w''_0(x)| \lesssim \varepsilon^{-2}$ for $|x| \leq \varepsilon^2$. It follows that for $|x| \leq \varepsilon^2$ we have
\begin{align*}
\big\vert \eta_{xx} - w''_0 \int^t_{-\varepsilon} \! \! e^{-\frac{1}{8}k_0} I_\tau \: d\tau \big\vert & \lesssim (\varepsilon+t) \varepsilon^{\mu-2}.
\end{align*} 
Since $w''_0(0) = 0$ and $w'''_0 \sim \varepsilon^{-4}$ for $|x| \leq \varepsilon^{3/2}$, it follows that
$$ |w''_0(x)| \gtrsim \varepsilon^{-4} |x| \text{ and } \sgn(w''_0(x)) = \sgn(x) $$
for $|x| \leq \varepsilon^{3/2}$. Therefore, we have
$$ |\eta_{xx}| \gtrsim (\varepsilon+t) \varepsilon^{-4} [ |x| - \mathcal O(\varepsilon^{\mu+2})] \hspace{10mm} \forall \: |x| \leq \varepsilon^2. $$
It follows that there exists a constant $C$ such that for $C\varepsilon^{2+\mu} < x < \varepsilon^2$ we have $\eta_{xx}(x,t) > 0$ and for $-\varepsilon^2 < x < -C \varepsilon^{2+\mu}$ we have $\eta_{xx}(x,t) < 0$. So for all $t \in (-\varepsilon, T_*]$ there exists a unique zero of $\eta_{xx}(\cdot, t)$ in $(-\varepsilon^{3/2}, \varepsilon^{3/2})$. 

 Therefore,  we conclude that there exists a $C^2$ curve $x_*: (-\varepsilon, T_*] \rightarrow \R$ such that
$$ \{ (x,t) : |x| \leq \varepsilon^{3/2}, \eta_{xx}(x,t) = 0, -\varepsilon < t \leq  T_* \} = \{ (x_*(t),t) : -\varepsilon < t \leq T_* \}. $$
Furthermore, we know that $|x_*(t)| \leq C \varepsilon^{2+\mu}$ for all $t$. From here it is easy to conclude that $\eta_{xx}(x,t) < 0$ for $-\varepsilon^{3/2} \leq x < x_*(t)$ and $\eta_{xx}(x,t) > 0$ for $x_*(t) < x \leq \varepsilon^{3/2}$, so that $x_*(t)$ must be the minimizer of $\eta_x(\cdot, t)$ over $[-\varepsilon^{3/2}, \varepsilon^{3/2}]$.

Define $x_* : = x_*(T_*)$. We know that $\min_{\mathbb{T}} \eta_x(\cdot, t) \rightarrow 0$ as $t \rightarrow T_*$ and $\eta_x$ is bound below for $|x| \geq \varepsilon^{3/2}$, so $\eta_x(x_*(t),t) \rightarrow 0$ as $t \rightarrow T_*$. Our result now follows.
\end{proof}

We can now improve upon our lower bounds for $\eta_x$. Let $x_*(t)$ be the curve from the proof of Lemma \ref{lem:x_*}. If $t> -\varepsilon$ and $x \in (-\pi, \pi]$, there exists $\bar{x}(x,t)$ in between $x$ and $x_*$ such that 
\begin{align*}
\eta_x(x,t) & = \eta_x(x_*(t),t) + \frac{\eta_{xxx}(\bar{x}(x,t), t)}{2} (x-x_*(t))^2 \notag \\
& \geq \frac{\eta_{xxx}(\bar{x}(x,t), t)}{2} (x-x_*(t))^2.
\end{align*}
Since $|x_*| \lesssim \varepsilon^{2+\mu}$, if $\varepsilon^2 \leq |x| \leq \varepsilon^{3/2}$, then $(x-x_*(t))^2 \gtrsim \varepsilon^4$ and $\eta_{xxx}(\bar{x},t) \gtrsim (\varepsilon + t) \varepsilon^{-4}$, so we have
\begin{align*}
\eta_x(x,t) & \gtrsim (\varepsilon + t).
\end{align*}
It follows that for $\varepsilon^2 \leq |x| \leq \varepsilon^{3/2}$, $-\frac{\varepsilon}{2} \leq t \leq T_*$ we have $\eta_x \gtrsim \varepsilon$. We already know (see \S~\ref{sec:T_*}) that
$$ \eta_x \geq -\tfrac{t}{\varepsilon} + \mathcal O(\varepsilon^\mu) $$
for all $(x,t) \in \mathbb{T} \times [-\varepsilon, T_*]$, so we conclude that 
\begin{align}
\label{eta_x:lowerbound:2}
\eta_x(x,t)  \gtrsim \varepsilon  \hspace{6mm} \text{for } \varepsilon^2 \leq |x| \leq \varepsilon^{3/2}.
\end{align}

\begin{lemma}[Improved estimates for $\eta_x$ and $\eta_{xx}$]
\label{lem:eta_x:improve}
There exist constants $A,c,C$ such that for all $(x,t) \in [-\varepsilon^2, \varepsilon^2] \times [-\varepsilon, T^*)$, we have
\begin{align}
\frac{1}{2\varepsilon} (T_*-t) + c(\varepsilon+t) \varepsilon^{-4} (x-x_*)^2 & \leq \eta_x(x,t)  \leq \frac{3}{2\varepsilon} (T_*-t) + C \varepsilon^{-3} (x-x_*)^2    \\
-A \varepsilon^{-2} (T_*-t) + c(\varepsilon+t) \varepsilon^{-4} (x-x_*)&  \leq \eta_{xx}(x,t)   \leq A \varepsilon^{-2} (T_*-t) + C \varepsilon^{-3} (x-x_*)    \text{ if } x \geq x_* \\
-A \varepsilon^{-2} (T_*-t) + C \varepsilon^{-3} (x-x_*) & \leq \eta_{xx}(x,t)   \leq A \varepsilon^{-2} (T_*-t) + c(\varepsilon+t) \varepsilon^{-4} (x-x_*)   \text{ if } x \leq x_*
\end{align}
\end{lemma}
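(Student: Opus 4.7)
The plan is to Taylor-expand $\eta_x$ and $\eta_{xx}$ about the blowup label $x_*$, combining the identities $\eta_x(x_*,T_*) = \eta_{xx}(x_*,T_*) = 0$ from Lemma~\ref{lem:x_*} with sharp estimates at $x=x_*$ and the two-sided bound $c(\varepsilon+t)\varepsilon^{-4} \leq \eta_{xxx}(\bar x,t) \leq C\varepsilon^{-3}$ for $|\bar x| \leq \varepsilon^{3/2}$, which is already established in Section~\ref{sec:x_*}.

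The first step is to prove the sharp envelope
$$\tfrac{1}{2\varepsilon}(T_*-t) \leq \eta_x(x_*,t) \leq \tfrac{3}{2\varepsilon}(T_*-t).$$
I would integrate the scalar ODE
$$\partial_t \eta_x(x_*,t) = (\eta_x q^w \cir \eta)(x_*,t) + \tfrac{1}{4}\eta_x (c\p_\theta k)\cir\eta(x_*,t) + \tfrac{1}{3}\eta_x (\p_\theta z\cir\eta)(x_*,t).$$
Using the identity $\eta_x q^w \cir \eta = w_0' e^{-k_0/8} I_t + \mathcal O(\varepsilon^{\gamma_1})$ from Section~\ref{sec:intbd}, the Taylor expansion $w_0'(x_*) = -\tfrac{1}{\varepsilon}+\mathcal O(\varepsilon^{2\mu})$ (obtained from $w_0'(0) = -\tfrac{1}{\varepsilon}$, $w_0''(0) = 0$, $|w_0'''|\lesssim \varepsilon^{-4}$, and $|x_*|\lesssim \varepsilon^{2+\mu}$), together with $I_t(x_*)e^{-k_0(x_*)/8} = 1+\mathcal O(\varepsilon)$ and the absolute bounds $\|\p_\theta k\|_{L^\infty} \lesssim \varepsilon^{\gamma_1}$, $\|\p_\theta z\|_{L^\infty}\lesssim \varepsilon^{\mu-1}$, one obtains $\partial_t\eta_x(x_*,t) = -\tfrac{1}{\varepsilon}(1+\mathcal O(\varepsilon^\mu))$. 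Integration from $t$ to $T_*$ using $\eta_x(x_*,T_*) = 0$ yields $\eta_x(x_*,t) = \tfrac{T_*-t}{\varepsilon}(1+\mathcal O(\varepsilon^\mu))$, producing the required bracket for $\varepsilon$ sufficiently small. A parallel argument, using the Duhamel formula for $\eta_{xx}$ from Section~\ref{sec:eta_xx} together with the identity $\eta_{xx}(x_*,T_*)=0$, extracts the sharpened bound $|w_0''(x_*)| \lesssim \varepsilon^{\mu-1}$ (strictly better than the $\mathcal O(\varepsilon^{\mu-2})$ coming from Taylor expansion of $w_0''$ at the origin alone), and consequently $|\eta_{xx}(x_*,t)| \lesssim (\varepsilon+t)\varepsilon^{\mu-1}$.

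Next, I would Taylor expand
\begin{align*}
\eta_x(x,t) &= \eta_x(x_*,t) + \eta_{xx}(x_*,t)(x-x_*) + \tfrac{1}{2}\eta_{xxx}(\bar x,t)(x-x_*)^2, \\
\eta_{xx}(x,t) &= \eta_{xx}(x_*,t) + \eta_{xxx}(\bar x,t)(x-x_*),
\end{align*}
for some $\bar x$ between $x$ and $x_*$. Since $|x|,|x_*| \leq \varepsilon^2 \ll \varepsilon^{3/2}$, the bounds on $\eta_{xxx}(\bar x, t)$ from Section~\ref{sec:x_*} apply to each such $\bar x$. The $\eta_{xx}$ estimates follow immediately upon substituting $|\eta_{xx}(x_*,t)| \lesssim (\varepsilon+t)\varepsilon^{\mu-1} \lesssim \varepsilon^{-2}(T_*-t)$ (the latter inequality holding uniformly modulo choice of the constant $A$, as a consequence of the first part). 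For the $\eta_x$ estimates, the crucial cross term $\eta_{xx}(x_*,t)(x-x_*)$ is handled by Young's inequality
$$|\eta_{xx}(x_*,t)(x-x_*)| \leq \tfrac{|\eta_{xx}(x_*,t)|^2}{2\delta} + \tfrac{\delta}{2}(x-x_*)^2,$$
with $\delta = \tfrac{c}{2}(\varepsilon+t)\varepsilon^{-4}$. The residual $\tfrac{|\eta_{xx}(x_*,t)|^2}{2\delta}$ is then of order $(\varepsilon+t)\varepsilon^{2\mu}\lesssim \varepsilon^{1+2\mu}$, which is absorbed into the slack $\eta_x(x_*,t) - \tfrac{1}{2\varepsilon}(T_*-t) \gtrsim \tfrac{T_*-t}{4\varepsilon}$ whenever $T_*-t \gtrsim \varepsilon^{2+2\mu}$. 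In the complementary regime $T_*-t \ll \varepsilon^{2+2\mu}$ near the preshock, one closes the estimate by continuity, writing $\eta_x(x,t) = \eta_x(x,T_*) + \mathcal O((T_*-t)/\varepsilon)$ and using the exact identity $\eta_x(x,T_*) = \tfrac{1}{2}\eta_{xxx}(\bar x,T_*)(x-x_*)^2$, from which the quadratic lower bound follows with $c$ chosen sufficiently small relative to the lower bound constant on $\eta_{xxx}$.

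The main obstacle is the uniform handling of the Young-generated cross-term error in the degenerate corner $t\to T_*$, $x\to x_*$, where both $\eta_x(x_*,t)$ and $\eta_{xx}(x_*,t)$ vanish simultaneously. The essential quantitative input making the argument close is the refined bound $w_0''(x_*) = \mathcal O(\varepsilon^{\mu-1})$, derived from the exact vanishing $\eta_{xx}(x_*,T_*) = 0$; without this improvement, the Young absorption would fail uniformly in a neighborhood of the preshock.
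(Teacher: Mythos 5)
Your overall strategy (freeze $t$, Taylor-expand in $x$ about $x_*$, and feed in sharp information at the label $x_*$) is a genuinely different decomposition from the paper's, which Taylor-expands jointly in $(x,t)$ about $(x_*,T_*)$ and estimates the coefficients $\eta_{xt}(x_*,T_*)$, $\eta_{xxx}$, $\eta_{xxt}$, $\eta_{xtt}$. Your first step is fine: integrating $\eta_{xt}(x_*,\cdot)=w_0'(x_*)e^{-k_0/8}I_t+\mathcal O(\varepsilon^{\beta_1})$ backwards from $\eta_x(x_*,T_*)=0$ does give $\eta_x(x_*,t)=\tfrac{T_*-t}{\varepsilon}(1+\mathcal O(\varepsilon^\mu))$. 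The gap is in your treatment of $\eta_{xx}(x_*,t)$, and it is what keeps the rest from closing. First, the ``sharpened'' bound $|w_0''(x_*)|\lesssim\varepsilon^{\mu-1}$ is not available: plain Taylor expansion ($w_0''(0)=0$, $|w_0'''|\lesssim\varepsilon^{-4}$, $|x_*|\lesssim\varepsilon^{2+\mu}$) gives $\varepsilon^{\mu-2}$, and the constraint $\eta_{xx}(x_*,T_*)=0$ combined with the Duhamel formula gives no more, since its error at $(x_*,T_*)$ is of size $(\varepsilon+T_*)\varepsilon^{\mu-2}\approx\varepsilon^{\mu-1}$ and must be divided by $\int_{-\varepsilon}^{T_*}I_\tau e^{-k_0/8}\,d\tau\approx\varepsilon$, again yielding only $\varepsilon^{\mu-2}$. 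Second, and more importantly, any bound of the form $|\eta_{xx}(x_*,t)|\lesssim(\varepsilon+t)\varepsilon^{\mu-1}$ does not vanish as $t\to T_*$ (it is of size $\varepsilon^{\mu}$ there), so the asserted comparison $(\varepsilon+t)\varepsilon^{\mu-1}\lesssim\varepsilon^{-2}(T_*-t)$ is false once $T_*-t\ll\varepsilon^{\mu+2}$; it is not a consequence of the first part, and without it your $\eta_{xx}$ estimates (which force $\eta_{xx}(x_*,t)\to0$ at rate $T_*-t$) do not follow. Third, the Young absorption of the cross term $\eta_{xx}(x_*,t)(x-x_*)$ inherits the same defect, and the continuity patch in the corner regime does not repair it: from $\eta_x(x,t)\geq\tfrac12\eta_{xxx}(\bar x,T_*)(x-x_*)^2-C(T_*-t)/\varepsilon$ you cannot deduce $\eta_x(x,t)\geq\tfrac{1}{2\varepsilon}(T_*-t)+c(\varepsilon+t)\varepsilon^{-4}(x-x_*)^2$ at points with $(x-x_*)^2\lesssim\varepsilon^2(T_*-t)$, because the $\mathcal O((T_*-t)/\varepsilon)$ term carries no favorable sign.

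The missing ingredient is a bound on $\eta_{xx}(x_*,t)$ vanishing linearly in $T_*-t$, and it is available with tools you already cite: since $\eta_{xx}(x_*,T_*)=0$, write $\eta_{xx}(x_*,t)=-\int_t^{T_*}\eta_{txx}(x_*,\tau)\,d\tau$ and use $\eta_{txx}=\p_x(\eta_xq^w\circ\eta)+\tfrac14\p_x(c\circ\eta)\p_x(k\circ\eta)+\tfrac14(c\circ\eta)\p_x^2(k\circ\eta)+\tfrac13\p_x^2(z\circ\eta)$, which the second-derivative estimates bound by $\varepsilon^{-2}$ (this is exactly the mixed coefficient the paper estimates). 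This gives $|\eta_{xx}(x_*,t)|\leq A\varepsilon^{-2}(T_*-t)$, after which the cross term is at most $A\varepsilon^{-2}(T_*-t)|x-x_*|\leq 2A(T_*-t)$, negligible against $\tfrac{1}{4\varepsilon}(T_*-t)$ because $|x-x_*|\lesssim\varepsilon^2$ on the domain considered; no Young inequality or case splitting is needed, and both estimates then follow from your one-variable expansions together with $\eta_{xxx}(\bar x,t)\sim(\varepsilon+t)\varepsilon^{-4}$. With that repair your route becomes essentially equivalent in content to the paper's proof, which extracts the same quantitative inputs ($\eta_{xt}(x_*,T_*)\approx-1/\varepsilon$, $\eta_{xxx}\sim(\varepsilon+t)\varepsilon^{-4}$, $|\eta_{txx}|\lesssim\varepsilon^{-2}$, plus $|\eta_{xtt}|\lesssim\varepsilon^{\gamma_2\wedge\beta_1-1}$) directly from a two-variable Taylor expansion at $(x_*,T_*)$.
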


\begin{proof}[Proof of Lemma~\ref{lem:eta_x:improve}]
Fix a point $(x,t) \in [-\varepsilon^2, \varepsilon^2] \times [-\varepsilon, T^*)$. We know that $\eta_x$ is $C^1$ on $\mathbb{T} \times [-\varepsilon, T_*]$ and is $C^2$ on $\mathbb{T} \times [-\varepsilon, T_*)$. Therefore, Taylor's theorem tells us that there exists a point $(x_1,t_1)$ on the segment connecting $(x_*, T_*)$ to $(x,t)$ such that
\begin{align}
\eta_x(x,t) & = \eta_{xt}(x_*,T_*) (t-T_*) + \tfrac{1}{2} \eta_{xxx}(x_1,t_1) (x-x_*)^2  \notag \\
& + \eta_{xxt}(x_1,t_1)(t-T_*)(x-x_*) + \tfrac{1}{2} \eta_{xtt} (x_1,t_1) (t-T_*)^2.
\end{align}
Similarly, there exists a point $(x_2,t_2)$ on the segment such that
\begin{align}
\eta_{xx}(x,t) & = \eta_{xxx} (x_2,t_2) (x-x_*) + \eta_{txx}(x_2,t_2) (t-T_*).
\end{align}

We know that
$$ \eta_{xt} = w'_0 e^{-\frac{1}{8}k_0} I_t + \mathcal O(\varepsilon^{\beta_1}). $$
We also know that since $w''_0(0) = 0$, $|x_*| \lesssim \varepsilon^{2+\mu}$, and $|w'''_0| \lesssim \varepsilon^{-4}$ we have
$$ -\tfrac{1}{\varepsilon} \leq w'_0(x_*) \leq - \tfrac{1+ C\varepsilon^{1+2\mu}}{\varepsilon}. $$
$$ -\tfrac{1}{\varepsilon}-\mathcal O(\varepsilon^{\gamma_1}) \leq \eta_{xt}(x_*,T_*) \leq - \tfrac{1+ C\varepsilon^{1+2\mu}}{\varepsilon} + \mathcal O(\varepsilon^{\gamma_1}). $$
We also know that for $i=1,2$
$$ \eta_{xxx} (x_i,t_i) \sim (\varepsilon + t_i) \varepsilon^{-4}. $$
$$ \eta_{txx} = \p_x(\eta_xq^w\cir\eta) + \tfrac{1}{4} \p_x (c\cir\eta) \p_x(k\cir\eta) + \tfrac{1}{4} c\cir\eta \p^2_x(k\cir\eta) + \tfrac{1}{3} \p^2_x(z\cir\eta). $$
So for $i=1,2$
$$ |\eta_{txx}(x_i,t_i)| \lesssim \varepsilon^{-2}. $$
Also
\begin{align*}
\eta_{xtt} & = (w'_0 -\tfrac{1}{4}c_0k'_0) \p_t\big( I_t e^{-\frac{1}{8}k_0}\big) + ( \tfrac{1}{12} c\p_\theta q^z - \tfrac{8}{3} \p_\theta a w)\cir\eta \notag \\
& + \eta_{xt} ( \tfrac{1}{4} \p_\theta k + \tfrac{1}{3} \p_\theta z)\cir\eta + \eta_x ( \tfrac{1}{4} \p_t(\p_\theta k\cir\eta) + \tfrac{1}{3} \p_t(\p_\theta z\cir\eta) ). \\
\implies |\eta_{xtt}(x_1,t_1)| & \lesssim \varepsilon^{\gamma_2 \wedge \beta_1-1}.
\end{align*}
Our result now follows.
\end{proof}

Using Lemma ~\ref{lem:eta_x:improve}, we can now conclude that
\begin{align}
\frac{1}{\eta_x} & \leq  \bound{\big[\frac{1}{2\varepsilon} (T_*-t) + c(\varepsilon+t) \varepsilon^{-4} (x-x_*)^2\big]^{-1}}{\varepsilon^{-1}}, \label{1/eta_x:bound} \\
\frac{\eta_{xx}^2}{\eta_x} & \leq \bound{\varepsilon^{-3}}{\eps^{-4}}. \label{eta_xx/eta_x:bound}
\end{align}
The bound \eqref{1/eta_x:bound} will let us deduce \eqref{regularity} and \eqref{eta_xx/eta_x:bound} will be used frequently in \S~\ref{sec:eta:4} and \S~\ref{sec:eta:5}.

\subsection{Fourth derivative estimates along $\eta$ }
\label{sec:eta:4}

We know that
\begin{align}
\eta^4_x\p^3_\theta w\cir\eta = \eta_x \p^3_x(w\cir\eta) - 3\eta_{xx} \p^2_x(w\cir\eta) + \big( 3 \tfrac{\eta_{xx}^2}{\eta_x} - \eta_{xxx} \big) \p_x(w\cir\eta).
\end{align}
Therefore, 
$$ \eta^4_x |\p^3_\theta w\cir\eta| \lesssim \varepsilon^{-4} + \varepsilon^{-1} \tfrac{\eta_{xx}^2}{\eta_x} \leq \bound{ \varepsilon^{-4}}{\varepsilon^{-5}}. $$
It now follows that
\begin{align}
\eta^4_x |\p^4_\theta k\cir\eta| & \leq \bound{\varepsilon^{\gamma_2 - 5/2 \wedge \mu-4}}{ \varepsilon^{\mu-5}}, \\
\eta^4_x|\p^4_\theta a\cir\eta| & \leq \bound{ \varepsilon^{-4}}{ \varepsilon^{-5}}, \\
\eta^4_x |\p^4_\theta z\cir\eta| & \leq \bound{\varepsilon^{\mu-5}}{\varepsilon^{\mu-6}} \\
\eta^4_x|\p^4_\theta \varpi \cir \eta| & \leq \bound{\eps^{-5}}{\eps^{-6}}.
\end{align}

The usual argument for bounding derivatives of $\eta_x$ and $\eta_x q^w\cir\eta$ now gives
$$ |\p^4_x\eta| \leq \bound{ (\varepsilon + t) ( |\p^4_x w_0| + \varepsilon^{\mu-5} )} { (\varepsilon + t) ( |\p^4_x w_0| + \varepsilon^{\mu-6} )} $$
and
$$ \big\vert \p^3_x(\eta_x q^w\cir\eta) \big\vert \leq \bound{ \varepsilon^{\mu-5}}{ \varepsilon^{-\frac{11}{2}}}. $$

In the end, we get
\begin{align}
|\p^4_x(k\cir\eta)| & \leq \bound{ \varepsilon^{\gamma_2-3\wedge \mu-4}}{ \varepsilon^{\gamma_2-4 \wedge \mu-5}},   \\
|\p^4_x(a\cir\eta)| & \leq \bound{ \varepsilon^{-4}}{ \varepsilon^{-5}}, \\
|\p^4_x(z\cir\eta)| & \leq \bound{\varepsilon^{\mu-5}}{ \varepsilon^{\mu-6}}, \\
|\p^4_x(\varpi\cir\eta)| & \leq \bound{\eps^{-5}}{\eps^{-6}}, \\
|\p^4_x(w\cir\eta)| & \leq \bound{ \varepsilon^{\mu-5}}{ \varepsilon^{-\frac{11}{2}}}.
\end{align}

\subsection{Fifth Derivative Estimates}
\label{sec:eta:5}

These estimates are different from the previous sections because they require more algebra and hinge on admittedly unexpected cancellation. First note that
\begin{align*}
\eta^2_x \p^2_\theta c\cir\eta &= -\p_x(c\cir\eta) \tfrac{\eta_{xx}}{\eta_x} + \p^2_x(c\cir\eta) \\
\eta^3_x \p^3_\theta c\cir\eta & = (3\tfrac{\eta_{xx}^2}{\eta_x} - \eta_{xxx}) \p_x(c\cir\eta) \tfrac{1}{\eta_x} -3\p^2_x(c\cir\eta) \tfrac{\eta_{xx}}{\eta_x} + \p^3_x(c\cir\eta) \\
\eta_{xx} \eta^3_x (\p_\theta c\p^2_\theta c)\cir\eta & = \p_x(c\cir\eta)[ \eta_{xx} \p^2_x(c\cir\eta) - \p_x(c\cir\eta) \tfrac{\eta_{xx}^2}{\eta_x}] \\
\eta_{xx}\eta^3_x \p^3_\theta c\cir\eta & = (3\tfrac{\eta_{xx}^2}{\eta_x}-\eta_{xxx}) \p_x(c\cir\eta) \tfrac{\eta_{xx}}{\eta_x} - 3\p^2_x(c\cir\eta) \tfrac{\eta_{xx}^2}{\eta_x} + \p^3_x(c\cir\eta) \eta_{xx} \\
\eta^5_x \p^2_\theta c^2\cir\eta & = \p_x(c\cir\eta)^2 \tfrac{\eta_{xx}^2}{\eta_x} - 2 \p^2_x(c\cir\eta) \p_x(c\cir\eta) \eta_{xx} + \p^2_x(c\cir\eta)^2 \eta_x \\
\eta^5_x(\p_\theta c^2 \p^2_\theta c)\cir\eta & = \p_x(c\cir\eta)^2 [ -\p_x(c\cir\eta) \eta_{xx} + \p^2_x(c\cir\eta) \eta_x] \\
\eta^5_x(\p_\theta c\p^3_\theta c)\cir\eta & = \p_x(c\cir\eta)[ (3\tfrac{\eta_{xx}^2}{\eta_x} -\eta_{xxx}) \p_x(c\cir\eta) - 3\p^2_x(c\cir\eta) \eta_{xx} + \p^3_x(c\cir\eta) \eta_x] \\
\eta^5_x \p^4_\theta c\cir\eta & =(10\eta_{xxx} - 15\tfrac{\eta_{xx}^2}{\eta_x}) \p_x(c\cir\eta) \tfrac{\eta_{xx}}{\eta_x} - \p^4_x\eta \p_x(c\cir\eta) + (15 \tfrac{\eta_{xx}^2}{\eta_x} - 4\eta_{xxx}) \p^2_x(c\cir\eta) \\
&  - 6 \eta_{xx} \p^3_x(c\cir\eta) + \eta_x \p^4_x(c\cir\eta).
\end{align*}
Next note that
\begin{align*}
\p^2_\theta a & = 2[\p_\theta a - c + 2z] c^{-1} \p_\theta c + \mathcal O(\varepsilon^{-1}), \\
\p^3_\theta a & = 2[\p_\theta a - c + 2z] c^{-1} \p^2_\theta c + \mathcal O(\varepsilon^{-\frac{5}{2}}+ \varepsilon^{-1}|\p_\theta w| + |\p_\theta w|^2), \\
\p^4_\theta a & = 2[\p_\theta a - c + 2z] c^{-1} \p^3_\theta c + \mathcal O(\varepsilon^{-4} + \varepsilon^{-\frac{5}{2}}|\p_\theta w| + \varepsilon^{-1} |\p_\theta w|^2 + \varepsilon^{-1} |\p^2_\theta w| + |\p_\theta w| |\p^2_\theta w| ), \\
\p^5_\theta a & = 2[\p_\theta a-c-2z]c^{-1} \p^4_\theta c + \mathcal O\big( \varepsilon^{-\frac{11}{2}} + \varepsilon^{-4}|\p_\theta w| + \varepsilon^{-5/2}(|\p_\theta w|^2 + |\p^2_\theta w|) \notag \\
& \hspace{50mm} + \varepsilon^{-1}( |\p_\theta w|^3 + |\p_\theta w| |\p_\theta^2 w| + |\p^3_\theta w|) \notag \\
& \hspace{50mm} + |\p_\theta w| |\p^3_\theta w| + |\p_\theta w|^2 |\p^2_\theta w| + |\p^2_\theta w|^2 \hspace{2mm} \big).
\end{align*}
Combining these identities and our estimates gives us
\begin{align*}
10 \eta_{xxx} \tfrac{\eta_{xx}}{\eta_x} \eta_x \p^2_\theta a\cir\eta & =2[\p_\theta ac^{-1}-1-2c^{-1}z]\cir\eta (10 \eta_{xxx} + 0 ) \p_x(c\cir\eta) \tfrac{\eta_{xx}}{\eta_x} + \bound{\varepsilon^{-5}}{\varepsilon^{-\frac{11}{2}}} , \\
(10\eta_{xxx} + 15 \tfrac{\eta_{xx}^2}{\eta_x}) \eta^2_x \p^3_\theta a\cir\eta & = 2[\p_\theta ac^{-1}-1-2c^{-1}z]\cir\eta (-10 \eta_{xxx} - 15 \tfrac{\eta_{xx}^2}{\eta_x} ) \p_x(c\cir\eta) \tfrac{\eta_{xx}}{\eta_x} +  \bound{ \varepsilon^{-\frac{11}{2}}}{\varepsilon^{-\frac{13}{2}}}, \\
10\eta_{xx} \eta^3_x \p^4_\theta a\cir\eta & = 2[\p_\theta ac^{-1}-1-2c^{-1}z]\cir\eta (-10 \eta_{xxx} + 30 \tfrac{\eta_{xx}^2}{\eta_x} ) \p_x(c\cir\eta) \tfrac{\eta_{xx}}{\eta_x} + \bound{\varepsilon^{-5}}{\varepsilon^{-\frac{13}{2}}} ,\\
\eta^5_x \p^5_\theta a\cir\eta & = 2[\p_\theta ac^{-1}-1-2c^{-1}z]\cir\eta(10\eta_{xxx} - 15 \tfrac{\eta_{xx}^2}{\eta_x}) \p_x(c\cir\eta) \tfrac{\eta_{xx}}{\eta_x} + \bound{\varepsilon^{-\frac{11}{2}}}{\varepsilon^{-\frac{13}{2}}}.
\end{align*}
Therefore,
\begin{align*}
\p^5_x(a\cir\eta) & = \eta^5_x \p^5_\theta a\cir\eta + 10\eta_{xx} \eta^3_x \p^4_\theta a\cir\eta + (10\eta_{xxx} + 15 \tfrac{\eta_{xx}^2}{\eta_x}) \eta^2_x \p^3_\theta a\cir\eta \\
& + (5\p^4_x \eta + 10 \eta_{xxx} \tfrac{\eta_{xx}}{\eta_x}) \eta_x \p^2_\theta a\cir\eta + \p^5_x \eta \p_\theta a\cir\eta \\
& = 2[\p_\theta ac^{-1}-1-2c^{-1}z]\cir\eta \big( [10-10-10+10] \eta_{xxx} + [0-15+30-15]\tfrac{\eta_{xx}^2}{\eta_x}\big) \p_x(c\cir\eta) \tfrac{\eta_{xx}}{\eta_x}  \\
& + \p^5_x\eta \p_\theta a\cir\eta + \bound{\varepsilon^{-\frac{11}{2}}}{\varepsilon^{-\frac{13}{2}}} \\
& = \p^5_x\eta \p_\theta a\cir\eta + \bound{\varepsilon^{-\frac{11}{2}}}{\varepsilon^{-\frac{13}{2}}}.
\end{align*}
The exact same cancellation occurs for the other two variables to give us
\begin{align*}
\p^5_x(k\cir\eta) & = \p^5_x\eta \p_\theta k\cir\eta + \bound{\varepsilon^{\gamma_2-4\wedge \mu-\frac{11}{2}}}{\varepsilon^{\mu-\frac{13}{2}}}, \\
\p^5_x(z\cir\eta) & = \p^5_x\eta \p_\theta z\cir\eta + \bound{\varepsilon^{\mu-\frac{13}{2}}}{\varepsilon^{\mu-\frac{15}{2}}}.
\end{align*}
Similar computations prove that
\begin{align*}
\p^4_x(\eta_x(\p_\theta k \p_\theta z)\cir\eta) & = \p^5_x(k\cir\eta)\p_\theta z\cir\eta + \p_\theta k\cir\eta \p^5_x(z\cir\eta) + \bound{\varepsilon^{\gamma_2+\mu-5\wedge 2\mu -\frac{13}{2}}}{\varepsilon^{2\mu - \frac{15}{2}}} .
\end{align*} 
Now the usual method for bounding the derivatives of $\eta_x$ and $\eta_x q^w\cir\eta$ produces
\begin{align}
\label{eta_xxxxx:bound}
|\p^5_x\eta| & \leq \bound{(\varepsilon + t) (|\p^5_x w_0| + \varepsilon^{\mu-\frac{13}{2}})}{ (\varepsilon + t) (|\p^5_x w_0| + \varepsilon^{\mu-\frac{15}{2}} )}.
\end{align}
In the end, we get
\begin{align*}
|\p^5_x(k\cir\eta)| & \lesssim \bound{\eps^{\mu-6}}{\eps^{\mu-\frac{13}{2}}}, \\
|\p^5_x(a\cir\eta)| & \lesssim \bound{\eps^{-6}}{\eps^{-\frac{13}{2}}}, \\
|\p^5_x(z\cir\eta)| & \lesssim \bound{\eps^{\mu-7}}{\eps^{\mu-\frac{15}{2}}} , \\
|\p^5_x(w\cir\eta)| & \lesssim \eps^{-7}.
\end{align*}

Using similar computations to those in this section, one can compute that
\begin{align*}
\eta^7_x |\p^5_\theta w\cir\eta| & \leq \bound{ \varepsilon^{-7}}{\varepsilon^{-9}}.
\end{align*} 
This bound, together with similar bounds we proved for $\p^n_\theta z\cir\eta, \p^n_\theta k\cir\eta,$ and $\p^n_\theta a\cir\eta$ throughout this section combine with \eqref{1/eta_x:bound} to give us \eqref{regularity}.

\section{Inversion of $\eta$}

\label{sec:expansion}

In this section, we will confine our attention to labels $ x \in (-\pi, \pi]$ with $|x| \leq \varepsilon^2$.

Since $w\cir\eta(\cdot, T_*)$ is $C^{4,1}$,it has the following Taylor expansion about $x_*$:
\begin{align}
w\cir \eta & = B^w_0 + B^w_1 (x-x_*) + B^w_2(x-x_*)^2 + B^w_3 (x-x_*)^3 + R^w_0(x) (x-x_*)^4.
\end{align}
Here
\begin{equation}
|B^w_0| \lesssim 1, |B^w_1| \lesssim \varepsilon^{-1}, |B^w_2| \lesssim \varepsilon^{-2}, |B^w_3| \lesssim \varepsilon^{-4}, |R^w_0| \lesssim \varepsilon^{\mu-5}.
\end{equation}

The flow $\eta(\cdot, T_*)$ also has the Taylor expansion
\begin{align}
\eta(x, T_*)- \xi_* & = a_3(x-x_*)^3 + a_4(x) (x-x_*)^4 \notag \\
\label{expansion:eta}
& = a_3(x-x_*)^3 + a_4(x_*)(x-x_*)^4 + a_5(x)(x-x_*)^5,
\end{align}
where $\xi_* : = \eta(x_*,T_*)$, $a_3 : = \frac{1}{6} \eta_{xxx}(x_*,T_*)$, $a_4(x_*) = \frac{1}{24} \p^4_x\eta (x_*,T_*)$,
\begin{equation}
a_4(x) : = \frac{\int^x_{x_*} \p^4_x\eta(y, T_*) (x-y)^{3} \: dy}{3! (x-x_*)^{4}} \hspace{3mm} \text{ and } \hspace{3mm} a_5(x) : = \frac{\int^x_{x_*} \p^5_x\eta(y, T_*) (x-y)^{4} \: dy}{4! (x-x_*)^{5}}.
\end{equation}
Here $a_3 \sim \varepsilon^{-3}$, $|a_4(x)| \lesssim \varepsilon^{\mu-4}$, and $|a_5(x)| \lesssim \varepsilon^{-6}$. Note that $|a^{-4/3}_3 a_4| \lesssim \varepsilon^\mu$.

Let $\theta = \eta(x,T_*)$. Lemma ~\ref{quartic inversion} implies that there exists a constant $C$ such that for all $x \in [-\varepsilon^2, \varepsilon^2]$ such that $|\theta-\xi_*| \leq C \varepsilon^{-3\mu}$ we have
\begin{align}
&(x-x_*) \notag\\
& = a^{-1/3}_3 (\theta-\xi_*)^{1/3} \big[ 1+ \tfrac{1}{3}\big( -a^{-4/3}_3a_4 (\theta-\xi_*)^{1/3} \big) + \tfrac{1}{3} \big( -a^{-4/3}_3a_4 (\theta-\xi_*)^{1/3} \big)^2 + \mathcal O(\varepsilon^{3\mu} |\theta-\xi_*|) \big] \notag \\
\label{expansion:2}
 & = a^{-1/3}_3 (\theta-\xi_*)^{1/3} \bigg[ 1+ \tfrac{1}{3}\big( -a^{-4/3}_3a_4 (\theta-\xi_*)^{1/3} \big) + \mathcal O( \varepsilon^{2\mu}|\theta-\xi_*|^{2/3}) \bigg] \\
 \label{expansion:3}
& = a^{-1/3}_3 (\theta-\xi_*)^{1/3} \big[ 1 + \mathcal O(\varepsilon^\mu |\theta-\xi_*|^{1/3} ) \big].
\end{align}

A quick bootstrap argument lets us conclude that this formula holds for all $x \in [-\varepsilon^2, \varepsilon^2]$. 
Furthermore, it is easy to show that there exists two constants $0<c<C$ such that
$$ \{ \theta : |\theta-\xi_*| \leq c\varepsilon^3 \} \subset \{ \theta : |x| \leq \varepsilon^2 \} \subset \{ \theta : |\theta-\xi_*| \leq C \varepsilon^3\}. $$
So we are working is a neighborhood of radius $\sim \varepsilon^3$ around $\xi_*$.

If we define
\begin{align*}
\aa^w_0 & : = B^w_0, \\
\aa^w_1 & : = a^{-1/3}_3 B^w_1,\\
\aa^w_2 & : = a^{-2/3}_3 B^w_2 -\tfrac{1}{3} a^{-5/3}_3 a_4(x_*) B^w_1,
\end{align*}
then we have
\begin{align*}
|\aa^w_0| \lesssim 1, \qquad |\aa^w_1| \lesssim 1,\qquad  |\aa^w_2| \lesssim 1, 
\end{align*}
and
\begin{align}
w(\theta, T_*) = \aa^w_0 + \aa^w_1(\theta-\xi_*)^{1/3} + \aa^w_2(\theta-\xi_*)^{2/3} + \OO(\eps^{-1}|\theta-\xi_*|).
\end{align}

Squaring \eqref{expansion:2} and cubing \eqref{expansion:3} gives us
\begin{align*}
(x-x_*)^2 & = a^{-2/3}_3 (\theta-\xi_*)^{2/3} - \tfrac{2}{3} a^{-2}_3 a_4 (\theta-\xi_*) + \OO(\eps^{2\mu+2}|\theta-\xi_*|^{4/3}), \\
(x-x_*)^3 & = a^{-1}_3(\theta-\xi_*) + \OO(\eps^{\mu+3} |\theta-\xi_*|^{4/3}).
\end{align*}
Therefore,
\begin{align}
\eta_x(x,T_*) & = 3 a_3 (x-x_*)^2 + [ 4a_4(x) + \p_x a_4(x) (x-x_*) ] (x-x_*)^3  \notag \\
& =: 3 a_3 (x-x_*)^2 + \tilde{a}_4 (x-x_*)^3 \notag \\
& = 3a^{1/3}_3 (\theta-\xi_*)^{2/3} + a^{-1}_3 (\tilde{a}_4-2a_4) (\theta-\xi_*) + \OO(\eps^{2\mu-1}|\theta-\xi_*|^{4/3}).
\end{align}
Using this formula, one can compute that
\begin{align*}
\eta_x(x,T_*)^{-1} & = \tfrac{1}{3} a^{-1/3}_3 (\theta-\xi_*)^{-2/3} - \tfrac{1}{9} a^{-5/3}_3 (\tilde{a}_4-2a_4) (\theta-\xi_*)^{-1/3} + \OO(\eps^{2\mu+1}).
\end{align*}
Since $a_4(x) = a_4(x_*) + \OO(\eps^{-5}|\theta-\xi_*|^{1/3})$ and $\tilde{a}_4(x) = 4a_4(x_*) + \OO(\eps^{-5}|\theta-\xi_*|^{1/3})$, it follows that
\begin{align}
\label{exp:1/eta_x}
\eta_x(x,T_*)^{-1} & = \tfrac{1}{3} a^{-1/3}_3 (\theta-\xi_*)^{-2/3} - \tfrac{2}{9} a^{-5/3}_3 a_4(x_*)(\theta-\xi_*)^{-1/3} + \OO(1).
\end{align}
Since $\p_x(w\cir\eta) = B^w_1 + 2B^w_2(x-x_*) + \OO(\eps^{-4}|x-x_*|^2)$, it follows that at time $T_*$ we have
\begin{align*}
\p_\theta w\cir\eta & = \eta_x^{-1} \p_x(w\cir\eta) \\
& = \big[ \tfrac{1}{3} a^{-1/3}_3 (\theta-\xi_*)^{-2/3} - \tfrac{2}{9} a^{-5/3}_3 a_4(x_*)(\theta-\xi_*)^{-1/3} + \OO(1) \big] \notag\\
&\qquad \qquad \cdot \big[B^w_1 + 2a^{-1/3}_3B^w_2(\theta-\xi_*)^{1/3} + \OO(\eps^{-2}|\theta-\xi_*|^2)\big] \\
& = \tfrac{1}{3} \aa^w_1 (\theta-\xi_*)^{-2/3} + \tfrac{2}{3}\aa^w_2(\theta-\xi_*)^{-1/3} + \mathcal O(\varepsilon^{-1}).
\end{align*}
This is the expansion for $\p_\theta w(\cdot, T_*)$ in Theorem ~\ref{thm:w:z:k:a}.

Now consider
\begin{align*}
\p^2_\theta w\cir\eta & = \eta_x(x,T_*)^{-2} \big[ \p^2_x(w\cir\eta) - \eta_{xx}(x,T_*) \p_\theta w\cir\eta(x,T_*) \big].
\end{align*}
Differentiating \eqref{expansion:eta} twice and using our above expansions for $(x-x_*)^2$ and $(x-x_*)^3$ gives us
\begin{align}
\eta_{xx}(x,T_*) & = 6a_3(x-x_*) + 12a_4(x_*) (x-x_*)^2 + [20a_5+10\p_x a_5 (x-x_*) + \p^2_x a_5 (x-x_*)^2] (x-x_*)^3 \notag \\
& = 6a_3(x-x_*) + 12a_4(x_*) (x-x_*)^2 +\OO(\eps^{-6} |x-x_*|^3) \notag \\
\label{expansion:eta_xx}
& = 6 a^{2/3}_3 (\theta-\xi_*)^{1/3} + 10 a^{-2/3}_3 a_4(x_*) (\theta-\xi_*)^{2/3} + \OO(\eps^{-3} |\theta-\xi_*|).
\end{align}
Using the fact that $\p^2_x(w\cir\eta) = 2B^w_2 + 6B^w_3(x-x_*) + \OO( \eps^{\mu-5}|x-x_*|^2)$ along with our expansion for $\p_\theta w\cir\eta$, \eqref{exp:1/eta_x}, and \eqref{expansion:eta_xx} now gives us our expansion for $\p^2_\theta w(\cdot, T_*)$ as stated in Theorem ~\ref{thm:w:z:k:a}.

Lastly, since
\begin{align*}
\p^3_\theta w\cir\eta & = \eta^{-3}_x \big[\p^3_x(w\cir\eta) - 3\eta_{xx} \eta_x \p^2_\theta w\cir\eta - \eta_{xxx} \p_\theta w\cir\eta \big],
\end{align*}
we can do similar computations to get the expansion for $\p^3_\theta w(\cdot, T_*)$.

To get the expansions for the variables $z,k,$ and $a$, similar computations can be made, except with the constants $B^z_j, B^k_j$, or $B^a_j$ instead of $B^w_j$. The computations for these variables are nicer because $B^z_1 = B^k_1 = B^a_1 = B^z_2 = B^k_2= B^a_2= 0$, but one should use fifth order expansions of $z\cir \eta, k\cir \eta$ and $a\cir \eta$. So we have
\begin{align*}
\aa^z_0 & : = B^z_0, \\
\aa^z_3 & : = a^{-1}_3 B^z_3, \\
\aa^z_4 & : = a^{-4/3}_3 B^z_4 - a^{-7/3}_3 a_4(x_*) B^z_3,
\end{align*}
and $\aa^k_0,\aa^k_3, \aa^k_4, \aa^a_0, \aa^a_3, \aa^a_4$ are defined analogously. When one does the computations, one obtains the expansions for $z,k,$ and $a$ listed in Theorem \ref{thm:w:z:k:a}.

Unlike the functions $w\cir\eta, z\circ\eta, k\cir\eta$, and $a\cir\eta$, which are in $C^{4,1}(\TT)$ at time $T_*$, the function $\varpi\cir\eta$ has only been proven to be in $C^{3,1}(\TT)$ at time $T_*$, so the Taylor expansion can only go to fourth order. However, we still have $B^\varpi_1=B^\varpi_2=0$ which allows us to get constants in our expansion. \qed


\appendix
\section{}

\subsection{Basic identities}
\label{sec:id}

The following equations are easy to compute from \eqref{eq:w:z:k:a}:
\begin{subequations}
\begin{align}
\label{c_t:identity}
-\tfrac{3}{2}\p_t(c\cir \psi) 
&= (\p_\theta w + 4a)\cir\psi (c\cir\psi).
\\
\label{c_tx:identity}
-\tfrac{3}{2}\p_t\big( \p_\theta c\cir\psi\big) 
& = (c\p_\theta^2 w)\cir\psi + \tfrac{3}{2}(\p_\theta c \p_\theta w) + \tfrac{3}{2}(\p_\theta c\p_\theta z) \cir\psi   + 4(\p_\theta a c + a\p_\theta c) \cir\psi.
\\
\label{k_t:identity}
-\tfrac{3}{2}\p_t(k\cir \psi) 
&= (c\p_\theta k)\circ\psi.
\\
\label{k_tx:identity}
-\tfrac{3}{2}\p_t\big(\p_\theta k\cir \psi) 
&= ( c \p_\theta^2 k) \cir\psi + (\p_\theta k \p_\theta w + \p_\theta k \p_\theta z )\cir\psi.
\\
\label{z_t:identity}
-\tfrac{3}{2}\p_t(z\cir\psi)  
&= (4az- \tfrac{1}{4}c^2 \p_\theta k)\cir\psi.
\\
\label{z_tx:identity}
-\tfrac{3}{2}\p_t(\p_\theta z\cir\psi) 
& = (\tfrac{1}{2}\p_\theta w \p_\theta z + \tfrac{3}{2} \p_\theta z^2 - \tfrac{1}{2} c \p_\theta c \p_\theta k - \tfrac{1}{4} c^2 \p_\theta^2 k)\cir\psi + 4(\p_\theta a z + a \p_\theta z)\cir\psi.
\\
\label{a_t:identity}
-\tfrac{3}{2}\p_t(a\cir\psi) 
& = (\p_\theta a c + 2a^2-c^2-4cz-2z^2)\cir\psi
\\
\label{a_tx:identity}
-\tfrac{3}{2}\p_t(\p_\theta a\cir\psi) 
& = (\p^2_\theta a c + 2\p_\theta a \p_\theta c + 2\p_\theta a \p_\theta z + 4a \p_\theta a) \cir\psi   -( [2c+4z]\p_\theta c + [4c+4z]\p_\theta z)\cir\psi.
\\
\label{c_t:phidentity}
-\tfrac{3}{2}\p_t(c\cir\phi) 
& = (4 ac + c\p_\theta c + c\p_\theta z)\cir\phi.
\\
\label{c_tx:phidentity}
-\tfrac{3}{2}\p_t( \p_\theta c\cir\phi) 
&= (c\p_\theta^2 c)\cir\phi + (c\p_\theta^2 z + 3\p_\theta c^2)\cir\phi  + 3(\p_\theta c\p_\theta z)\cir\phi + 4(\p_\theta a c + a\p_\theta c)\cir\phi.
\\
\label{k_tx:phidentity}
-\tfrac{3}{2}\p_t(\p_\theta k\cir\phi) 
&= (\p_\theta k \p_\theta w + \p_\theta k \p_\theta z)\cir\phi.
\\
\label{c_t:3dentity}
-\tfrac{3}{2}\p_t(c\circ\eta) 
&= ( \p_\theta z + 4a)\circ \eta (c\cir \eta).
\\
\label{k_t:3dentity}
-\tfrac{3}{2}\p_t(k\cir\eta) 
&= -(c\p_\theta k) \circ \eta\\
\label{k_tx:3dentity}
-\tfrac{3}{2}\p_t( \p_\theta k\cir\eta)
&= ( \p_\theta w \p_\theta k + \p_\theta z \p_\theta k - c \p^2_\theta k) \cir \eta.
\end{align}
\end{subequations}

\subsection{Quartic Inversion}
\label{sec:QI}

If $K$ is a field, and $K((z))$ denotes the field of formal Laurent series \footnote{ Formal Laurent series are formal power series which allow for finitely many terms of negative degree, not to be confused with the Laurent series in complex analysis, which may have infinitely many terms of negative degree but must converge in an annulus.} in the variable $z$. The field of Puiseux series in the variable $x$ is then defined to be the union $ \bigcup_{n > 0} K((x^{1/n})) $ which is itself a field. The most important result concerning Puiseux series is the following:  

\begin{theorem} [Puiseux-Newton]
\label{Puiseux-Newton}
 If $K$ is an algebraically closed field of characteristic 0, then the field $\bigcup_{n>0} K((x^{1/n}))$ of Puiseux series with coefficients in $K$ an algebraically closed field. Furthermore, given a polynomial $P(y) = \sum^N_{i=0} a_i(x) y^i$ with $a_i \in \bigcup_{n>0} K((x^{1/n}))$, the coefficients of the roots of $P$ in $y$ can be constructed using the method of Newton polygons.
\end{theorem}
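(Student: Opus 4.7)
The plan is to prove both assertions by the classical Newton polygon construction, treating them as two faces of the same algorithm: the first assertion follows from showing that the algorithm produces a root, and the second that the coefficients thereby produced lie in some $K((x^{1/n}))$.

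First I would reduce to polynomials with coefficients in a single $K((x^{1/n}))$. Given $P(y)=\sum_{i=0}^N a_i(x)y^i$ with $a_i\in\bigcup_n K((x^{1/n}))$, pick a common $n$ so that every $a_i\in K((x^{1/n}))$, and substitute $x=t^n$ to reduce to showing that any nonconstant polynomial in $y$ over $K((t))$ has a root in the field of Puiseux series $\bigcup_m K((t^{1/m}))$. After dividing by the leading coefficient we may further assume $P$ is monic in $y$, and after translating $y\mapsto y+\text{const}$ we may assume $a_{N-1}=0$ if convenient (Tschirnhaus).

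Next I would run the Newton polygon step. Let $v$ denote the $t$-adic valuation on $K((t))$, form the lattice points $\{(i,v(a_i))\}_{i=0}^N$, and take the lower convex hull $\mathcal N(P)$. For each edge of $\mathcal N(P)$ of slope $-s$ (with $s\in\mathbb Q_{\geq 0}$), substitute $y=t^s(c_0+y_1)$; the requirement that the lowest-order terms cancel is a polynomial equation in $c_0$ over $K$, which has a nonzero root since $K$ is algebraically closed of characteristic $0$. This produces the leading term $c_0 t^s$ of a root. Iterating the construction on the new polynomial $P_1(y_1):=t^{-Ns}P(t^s(c_0+y_1))$, which again has well-defined Newton polygon because $c_0\neq 0$ forces the constant term in $y_1$ to have positive valuation, yields successive terms $c_1 t^{s_1},c_2 t^{s_2},\ldots$ with $s<s_1<s_2<\cdots$.

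The main obstacle — and the point where the argument is not merely formal — is showing that this recursively constructed formal sum actually lies in some $K((t^{1/m}))$, i.e.\ that the denominators of the exponents $s_j$ are uniformly bounded. I would handle this with the standard stabilization argument: after finitely many Newton polygon steps, one reaches a situation in which the relevant edge has an integer slope and the associated characteristic polynomial has a simple root, after which Hensel's lemma (applied in $K((t^{1/m}))[[y]]$ for the $m$ produced by the preceding steps) takes over and refines the partial root to a true root without introducing any new fractional exponents. The bound on $m$ is controlled by the ramification needed to clear the slopes along the lower convex hull, which is at most $\text{lcm}$ of the denominators of the finitely many slopes encountered before the Hensel regime. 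This establishes both algebraic closedness of the Puiseux field and the effective construction of root coefficients by Newton polygons, proving the theorem; I would reference Walker's \emph{Algebraic Curves} or Abhyankar's treatment for the detailed bookkeeping rather than reproducing it in full.
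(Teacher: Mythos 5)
Your sketch is the classical Newton--polygon argument, which is exactly the proof in the sources the paper itself cites (Walker, Brieskorn--Kn\"orrer); the paper gives no argument of its own beyond that citation, so your proposal matches its approach. The outline is correct, including the one genuinely delicate point (stabilization of the exponent denominators and the passage to the Hensel regime, where characteristic $0$ is really used), which you rightly flag and defer to the standard references.
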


\begin{proof}[Proof of Theorem~\ref{Puiseux-Newton}]See \cite[Chapter IV, Section 3]{walker}   or \cite[Section 8.3]{brieskornknorrer}.
\end{proof}

Of particular interest to us will be the following special case of the Puiseux-Newton theorem:  

\begin{theorem}[Analytic Puiseux-Newton]
\label{Analytic-Puiseux-Netwon}
If $\C\{x\}$ denotes the ring of convergent power series in $x$, and $f(x,y) \in \C\{x\}[y]$ is a polynomial of degree $m> 0$, irreducible in $\C\{x\}[y]$, then there exists a convergent power series $y \in \C\{z\}$ such that the roots of $f$ in $\bigcup_{n > 0} \C(( x^{1/n}))$ are all given by
\begin{equation*}
y(x^{1/m}), y(e^{2\pi i/m} x^{1/m}), \hdots, y(e^{2\pi i\tfrac{m-1}{m}} x^{1/m}).
\end{equation*}
It follows that in general if $f(x,y) \in \C\{x\}[y]$ then for each Puiseux series solution $\bar{y}$ of $f(x,\bar{y}(x)) = 0$ there exists some $y \in \C\{z\}$ and $m \leq \deg f$ such that $\bar{y}(x) = y(x^{1/m})$.
\end{theorem}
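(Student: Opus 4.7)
The plan is to bootstrap the formal Puiseux--Newton theorem to the analytic setting by combining algebraic facts (irreducibility forces a single Galois orbit) with two analytic ingredients (the discriminant and Riemann's removable singularity theorem). Write $f(x,y)=\sum_{i=0}^m a_i(x)y^i$ with $a_i\in\C\{x\}$ and $a_m\not\equiv 0$. First I would reduce to the case in which $f$ is monic in $y$: after dividing by $a_m$ (or invoking Weierstrass preparation when $a_m(0)=0$) we may assume $a_m\equiv 1$ on some disk $\{|x|<\rho\}$, so that the roots are integral over $\C\{x\}$ and, a fortiori, integral over $\C\{x^{1/m}\}$.

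Next I would apply the formal theorem (Theorem \ref{Puiseux-Newton}) to produce $m$ formal Puiseux roots in $\bigcup_n\C((x^{1/n}))$. Since $\C\{x^{1/m}\}$ is integrally closed in its fraction field $\C((x^{1/m}))$, integrality rules out negative exponents, so each root lies in some $\C[[x^{1/n}]]$. Irreducibility of $f$ in $\C\{x\}[y]$ (equivalently, in $\C((x))[y]$ by Gauss) means that the cyclic Galois group $\mathrm{Gal}(\C((x^{1/m}))/\C((x)))$, generated by $x^{1/m}\mapsto\zeta_m x^{1/m}$ with $\zeta_m=e^{2\pi i/m}$, acts transitively on the set of roots; hence $n=m$, there are exactly $m$ roots, and writing one of them as $Y(x^{1/m})$ with $Y\in\C[[t]]$ the remaining roots are the Galois conjugates $Y(\zeta_m^j x^{1/m})$. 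At this stage everything is formal, and the content of the theorem reduces to showing $Y\in\C\{t\}$.

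To upgrade to convergence I would pass to the holomorphic polynomial $g(t,w):=f(t^m,w)\in\C\{t\}[w]$, which is holomorphic on some bidisk about $(0,0)$. Because $f$ is separable (characteristic zero, irreducible) its discriminant $\Delta(x)\in\C\{x\}$ is a nonzero convergent power series, hence nonvanishing on a punctured disk $0<|x|<\rho'$. On this punctured disk the $m$ roots of $f(x,\cdot)=0$ are distinct and, by the analytic implicit function theorem applied to $\partial_y f\neq 0$, locally extend to $m$ holomorphic branches. Picking one such branch $y_0(x)$ on a sector and setting $\Phi(t):=y_0(t^m)$ produces a holomorphic function on a sector of the punctured $t$-disk. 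The key monodromy computation is that a small loop in $t$ around $0$ corresponds to $m$ loops in $x$ around $0$; transitivity of the monodromy permutation (forced by irreducibility) makes it an $m$-cycle, so the $m$-th iterate is the identity. Consequently $\Phi$ extends to a single-valued holomorphic function on the whole punctured disk $0<|t|<(\rho')^{1/m}$.

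Finally I would remove the singularity at $t=0$: since $g$ is monic of degree $m$ in $w$ and its non-leading coefficients $a_i(t^m)$ are bounded near $t=0$, every root $w$ of $g(t,\cdot)=0$ is bounded by a uniform constant near $t=0$; so $\Phi$ is bounded, and Riemann's removable singularity theorem gives $\Phi\in\C\{t\}$. Its Taylor series is a convergent power series root of $g(t,\cdot)$ sharing its leading coefficient with the formal root $Y$, so by uniqueness of roots of a monic polynomial with a given first approximation (or directly by Hensel/Newton-type iteration comparing the two $\C[[t]]$-roots) $\Phi=Y$. Setting $y:=Y=\Phi$ yields the claimed formula, and the final ``in general'' sentence follows at once by factoring an arbitrary $f\in\C\{x\}[y]$ into irreducibles in $\C\{x\}[y]$ and applying the irreducible case to each factor, noting that each irreducible factor has degree at most $\deg f$. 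The main obstacle I anticipate is the monodromy step: one must justify carefully that the monodromy representation of $\pi_1$ of the punctured disk on the fiber of roots is transitive because $f$ is irreducible, a fact one typically proves by showing that an invariant subset of roots would correspond to a nontrivial factorization of $f$ over $\C\{x\}$, contradicting irreducibility.
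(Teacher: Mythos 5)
The paper does not actually prove this theorem—it delegates it to Brieskorn--Kn\"orrer \S 8.3—and your argument is, in essence, the standard proof given there: separate the roots over a punctured disk using the nonvanishing discriminant, follow one branch $y_0$, observe that irreducibility forces the monodromy permutation to be transitive (hence an $m$-cycle), so $\Phi(t):=y_0(t^m)$ is single-valued, bounded, and extends across $t=0$ by Riemann's removable singularity theorem. That architecture is correct, and your remark that transitivity is proved by noting that a monodromy-invariant proper subset of branches would produce, via its elementary symmetric functions (bounded, single-valued, hence in $\C\{x\}$ by Riemann again), a nontrivial factorization of $f$ over $\C\{x\}$, is exactly the right way to close the step you flagged as delicate.

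Two justifications do need repair, though neither is fatal. First, the parenthetical ``equivalently, in $\C((x))[y]$ by Gauss'' is not what Gauss's lemma gives: Gauss only transfers irreducibility from $\C\{x\}[y]$ to $\mathrm{Frac}(\C\{x\})[y]$ (convergent Laurent series), and the comparison between irreducibility over the convergent and the \emph{formal} Laurent field is a genuinely different statement (Henselian/completion arguments, or in fact a consequence of the very theorem being proved). The clean fix is to notice that you never need it: your analytic monodromy argument by itself produces $m$ distinct roots $\Phi(e^{2\pi i j/m}x^{1/m})$ with $\Phi\in\C\{t\}$, and since a degree-$m$ polynomial has at most $m$ roots in the Puiseux field, these exhaust them; the detour through the formal theorem and the identification $\Phi=Y$ (whose ``same leading coefficient'' criterion is also loose, since distinct conjugates can share a leading term—just take $y:=\Phi$) can be dropped. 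Second, the reduction to the monic case is not innocent when $a_m(0)=0$: dividing by $a_m$ leaves $\C\{x\}$, and Weierstrass preparation replaces $f$ by a Weierstrass polynomial of strictly smaller degree, discarding the roots with negative exponents (e.g.\ $f=xy-1$ has the root $x^{-1}$, which is not of the stated form). This is really an imprecision of the statement itself—as written it implicitly requires $f$ monic (or $a_m(0)\neq 0$), which is how the cited reference states it and how it is used in Lemma~\ref{quartic inversion}—but your proof should say explicitly that this hypothesis is being assumed rather than suggest it can be arranged in general.
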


\begin{proof}[Proof of Theorem~\ref{Analytic-Puiseux-Netwon}]
See~\cite[Section 8.3]{brieskornknorrer}. 
\end{proof}

\begin{lemma}[Quartic Inversion]
\label{quartic inversion}
There exists a constant $R > 0$ and a nonempty open interval $I$ containing $0$ such that for all $a_3 \in \R^\times, a_4 \in \R$ there exists a function $y(x)$ defined for $x$ satisfying $|a^3_4 x| < R^3 a^4_3$ such that
\begin{equation*}
\big\{ (x,y) \in \R^2 : |a^3_4x| < R^3 a^4_3, a_4 y \in a_3 I, -x+ a_3y^3+a_4y^4 = 0 \big\} = \big\{ (x,y(x)) : |a^3_4 x| < R^3 a^4_3 \big\}.
\end{equation*}
Furthermore, $y(x)$ is an analytic function of $x^{1/3}$ satisfying the bounds
\begin{equation*}
\big\vert y(x)-a^{1/3}_3 x^{1/3}+ \tfrac{1}{3} a^{-5/3}_3 a_4 x^{2/3} - \tfrac{1}{3} a^{-3}_3 a^2_4 x \big\vert \lesssim a^{-13/3}_3 a^3_4 x^{4/3}
\end{equation*}
for all $|a_4^3 x| < R^3 a^4_3$, with the constant in the inequality independent of $a_3, a_4$.
\end{lemma}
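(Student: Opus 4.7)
The idea is to reduce the two-parameter family of quartics to a single universal equation by rescaling, and then invoke the analytic implicit function theorem to obtain all the conclusions uniformly in $(a_3, a_4)$. Writing $s = x^{1/3}$ (real cube root) and substituting
\begin{equation*}
y = a_3^{-1/3} s\, z, \qquad \tau := a_3^{-4/3} a_4 s,
\end{equation*}
the equation $-x + a_3 y^3 + a_4 y^4 = 0$ becomes the parameter-free relation
\begin{equation*}
F(\tau, z) := z^3(1 + \tau z) - 1 = 0.
\end{equation*}
Since $F(0,1) = 0$ and $\partial_z F(0,1) = 3 \neq 0$, the analytic implicit function theorem supplies a universal radius $R > 0$ and a real-analytic function $z(\tau)$ on $(-R, R)$ with $z(0) = 1$, unique among solutions lying in a fixed neighborhood $|z - 1| < \delta$. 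The condition $|\tau| < R$ is identical to the stated condition $|a_4^3 x| < R^3 a_3^4$, since $|\tau|^3 = a_3^{-4}|a_4|^3|x|$ irrespective of the sign of $a_3$.

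\textbf{Taylor expansion.} Implicit differentiation of $z^3 + \tau z^4 = 1$ at $(\tau, z) = (0, 1)$ yields $z'(0) = -\tfrac{1}{3}$, and a second differentiation yields $z''(0) = \tfrac{2}{3}$, so
\begin{equation*}
z(\tau) = 1 - \tfrac{1}{3}\tau + \tfrac{1}{3}\tau^2 + \OO(\tau^3), \qquad |\tau| \leq R/2,
\end{equation*}
with the remainder constant absolute. Undoing the substitution gives
\begin{equation*}
y(x) = a_3^{-1/3} x^{1/3} - \tfrac{1}{3}a_3^{-5/3} a_4 x^{2/3} + \tfrac{1}{3}a_3^{-3} a_4^2 x + \OO\!\left(a_3^{-13/3} a_4^3 x^{4/3}\right),
\end{equation*}
which is the bound asserted in the lemma (the printed leading coefficient $a_3^{1/3}$ must be a typo for $a_3^{-1/3}$, as forced by the $a_4 = 0$ limit $y = a_3^{-1/3} x^{1/3}$ and by the way the lemma is invoked in \S~\ref{sec:expansion}).

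\textbf{Set equality and main obstacle.} To verify that the stated set of real pairs $(x,y)$ with $|a_4^3 x| < R^3 a_3^4$ and $a_4 y \in a_3 I$ is exactly the graph of $y(x)$, observe that the constraint rewrites as $\tau z \in I$. Choose $I = (-\tfrac{1}{2}, \tfrac{1}{2})$. For $|\tau| < R$, the quartic $\tau z^4 + z^3 - 1 = 0$ has four roots in $\C$: three are analytic perturbations of the cube roots of unity, of which exactly one (namely $z(\tau)$) is real and near $z = 1$, while the other two form a non-real conjugate pair near $e^{\pm 2\pi i /3}$; the fourth root is real and, by a direct expansion $z = -\tau^{-1} + \OO(\tau^2)$, satisfies $\tau z \approx -1 \notin I$. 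Hence $z(\tau)$ is the unique real solution with $\tau z \in I$, which transfers to the claimed graph equality for $y$. The main obstacle is precisely the demand that $R$, $\delta$, $I$, and all implicit constants be \emph{independent} of $(a_3, a_4)$; this uniformity is not obvious a priori but is achieved automatically by the reduction to $F(\tau, z) = 0$, which absorbs the entire parameter dependence into $\tau$. The only subordinate issues are the sign bookkeeping when $a_3 < 0$ or $x < 0$ (handled by using real cube roots throughout, noting $a_3^{-4/3} = (a_3^{-4})^{1/3} > 0$), which introduce no genuine difficulty.
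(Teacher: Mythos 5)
Your proof is correct, and it reaches the result by a genuinely different route than the paper. You absorb both parameters into the single variable $\tau=a_3^{-4/3}a_4x^{1/3}$ and get the universal analytic function $z(\tau)$ from the analytic implicit function theorem applied to $z^3(1+\tau z)=1$ at $(0,1)$; the paper instead writes down an explicit recursively defined formal series $\bar y$ (your $\tau z(\tau)$), verifies it solves $-x+y^3+y^4=0$ as a Puiseux series, and invokes the Puiseux--Newton theorem to get a positive radius of convergence, after which it rescales exactly as you do via $y(x)=\tfrac{a_3}{a_4}\bar y(a_3^{-4/3}a_4x^{1/3})$. For the set equality the two arguments also differ: you classify all four roots of $\tau z^4+z^3-1$ (one real near $1$, a non-real conjugate pair near $e^{\pm 2\pi i/3}$, and a real root with $\tau z\approx -1$) and take the explicit symmetric interval $I=(-\tfrac12,\tfrac12)$, whereas the paper takes $I$ to be the range of $\bar y$ on $(-R,R)$ and uses the observation $\p_x f=-1$, so that for each $y$ there is exactly one $x$ with $f(x,y)=0$, forcing any admissible $(x,y)$ onto the graph. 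Your route is arguably more elementary (no Puiseux machinery) and yields the Taylor coefficients by two implicit differentiations rather than by tracking $c_1=1$, $c_2=3$; the paper's route gives the whole series explicitly and a slicker uniqueness step. Two small points you should make explicit: the inclusion of the graph in the stated set requires $a_4y(x)\in a_3I$, i.e.\ $|\tau z(\tau)|<\tfrac12$, which needs $R$ shrunk so that $R\sup_{|\tau|<R}|z(\tau)|<\tfrac12$ (and your symmetric choice of $I$ is what makes the rewriting $a_4y\in a_3I\iff \tau z\in I$ valid when $a_3<0$); and since the lemma demands the error bound on all of $|\tau|<R$, simply rename $R/2$ as the final $R$. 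Your reading of the leading coefficient as $a_3^{-1/3}$ (a typo in the statement) is confirmed by the paper's own proof and by the way the lemma is used in the expansion section.
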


\begin{proof}[Proof of Lemma~\ref{quartic inversion}] The case where $a_4 = 0$ is trivial, so we will prove our result in the case $a_4 \in \R^\times$. Define the recursive sequence $c_0 : = 1$,
$$ c_n : = \hspace{-10mm}\sum_{\substack{\vec{k} \in (\Z_{\geq 0})^4 \\ k_1+ k_2+k_3+k_4 = n-1}} \hspace{-10mm} c_{k_1} c_{k_2} c_{k_3}c_{k_4} - \tfrac{1}{3} \hspace{-7mm}\sum_{\substack{\vec{m} \in (\Z_{\geq 0})^3 \\ m_1+m_2+m_3 = n \\ 0 \leq m_i \leq n-1 }} \hspace{-7mm} c_{m_1} c_{m_2} c_{m_3}, $$
and define the formal power series $\bar{y} \in \R[[x]]$,
$$ \bar{y}(x) = \sum^\infty_{n=0} (-1)^n \tfrac{c_n}{3^n} x^{n+1}. $$
It is easy to check that $y_0(x) : = \bar{y}(x^{1/3})$ is a Puiseux series solution to the algebraic equation $-x + y^3_0 + y^4_0 = 0$. It follows from~\ref{Analytic-Puiseux-Netwon} that $\bar{y}$ must be convergent with some positive (possibly infinite) radius of convergence $R$. Now pick any $a_3 \in \R^\times, a_4 \in \R^\times$. If we define
$$ y(x) : = \tfrac{a_3}{a_4}\bar{y}(a^{-4/3}_3 a_4 x^{1/3}), $$
then it is easy to check that $y$ solves $-x +a_3y^3+a_4y^4 = 0$. 

Define the interval $I$ to be the range of $\bar{y}$, thought of as a function on $(-R,R)$ and define $f(x,y) = -x + a_3y^3 + a_4y^4$. Because $\p_x f = -1$ everywhere, we know that for each $y \in \R$ the equation $f(x,y) = 0$ has exactly one solution, $x$. Therefore, if $(x,y)$ is a point such that  $|x| < a^4_3a^{-3}_4R^3$, $y \in a_3a^{-1}_4I$, and  $f(x,y) =0$, then there exists $x'$ with $|x'| < a^4_3a^{-3}_4 R^3$ such that $y(x') = y$ and since $f(x',y) = f(x',y(x')) = 0$ we conclude that $x = x'$ and $y = y(x)$.

The remaining expansion follows from the fact that $c_1=1$ and  $c_2 =3$, combined with the fact that the power series $\bar{y}$ is convergent.
\end{proof}

\begin{theorem}
\label{thm:Taylor:funstuff}
There exists universal constants $C_1,C_2$ such that the following is true: Suppose that $I \subset \R$ is an interval, $x_0 \in I$, and $\theta \in C^{3,1}(I)$ is such that $L : = \|\p^4_x \theta\|_{L^\infty}$, $a_3 \in \R^\times$, and $\theta$ has the Taylor expansion $$ \theta(x) = \theta_0 + a_3 (x-x_0)^3 + a_4(x)(x-x_0)^4$$ at $x_0$. Then for all $x \in I$ such that $|\theta(x)-\theta_0| \leq C_1 \frac{a^4_3}{L^3}$, we have
\begin{equation*}
 (x-x_0) = a^{-1/3}_3 (\theta(x)-\theta_0)^{1/3} - \tfrac{1}{3} a^{-5/3}_3 a_4(x) (\theta(x)-\theta_0)^{2/3} + \tfrac{1}{3}a^{-3}_3 a_4(x)^2(\theta(x)-\theta_0) + R(\theta-\theta_0),
 \end{equation*}
 where $R$ is a $C^{0,\frac{1}{3}}$ continuous function satisfying
 \begin{equation*}
 |R(\theta-\theta_0)| \leq C_2 a^{-13/3}_3 a_4(x)^3 (\theta(x)-\theta_0)^{4/3}.
 \end{equation*}
\end{theorem}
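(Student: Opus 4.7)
\smallskip

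The plan is to reduce the assertion pointwise in $x$ to Lemma~\ref{quartic inversion}, using the observation that although $a_4$ varies with $x$, for each \emph{fixed} $x$ the identity $\theta(x)-\theta_0 = a_3 (x-x_0)^3 + a_4(x)(x-x_0)^4$ is an algebraic quartic in $Y := x-x_0$ with the constant coefficient $\alpha := a_4(x)$. The setup step is to observe that since $\theta \in C^{3,1}(I)$ with $\|\partial_x^4\theta\|_{L^\infty} = L$, the integral form of the Taylor remainder yields the uniform bound $|a_4(x)| \le L/4!$ for all $x \in I$. Consequently, once the absolute constant $C_1$ is chosen small enough (say $C_1 \le R^3 \cdot 24^3$, with $R$ the universal radius from Lemma~\ref{quartic inversion}), the smallness hypothesis $|\theta(x)-\theta_0| \le C_1 a_3^4/L^3$ implies $|\alpha^3 \sigma| < R^3 a_3^4$ with $\sigma := \theta(x)-\theta_0$, so Lemma~\ref{quartic inversion} is applicable with parameters $(a_3,\alpha)$.

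Applying Lemma~\ref{quartic inversion} yields an analytic branch $y_\alpha(\sigma)$ of the quartic $-\sigma + a_3 Y^3 + \alpha Y^4 = 0$, uniquely characterized by the condition $\alpha y_\alpha(\sigma) \in a_3 J$ (where $J$ is the universal interval from the lemma), with the quantitative expansion
\begin{equation*}
\Bigl| y_\alpha(\sigma) - a_3^{-1/3}\sigma^{1/3} + \tfrac13 a_3^{-5/3}\alpha\,\sigma^{2/3} - \tfrac13 a_3^{-3}\alpha^2\,\sigma \Bigr| \;\lesssim\; a_3^{-13/3}\,\alpha^3\,\sigma^{4/3}.
\end{equation*}
The main content of the theorem is then the identification $Y = y_{a_4(x)}(\sigma)$, after which the expansion of $Y$ follows verbatim from the above by substituting $\alpha = a_4(x)$, with the residual $R(\sigma)$ inheriting the prescribed bound $|R(\sigma)| \le C_2 a_3^{-13/3}a_4(x)^3 \sigma^{4/3}$.

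The identification step is the main obstacle, since the quartic generically has several real roots and we must argue that the specific value $Y = x-x_0$ lies on the ``small'' branch $a_3^{-1}\alpha\,Y \in J$ isolated by Lemma~\ref{quartic inversion}. The direct estimate comes from factoring the quartic as $\sigma = Y^3\bigl(a_3 + \alpha Y\bigr)$: if one can already assume $|\alpha Y| \le a_3/2$, then $|a_3 + \alpha Y| \ge a_3/2$ and hence $|Y|^3 \le 2|\sigma|/a_3 \le 2C_1 a_3^3/L^3$, which returns $|\alpha Y| \le (2C_1)^{1/3}a_3/24$. Choosing $C_1$ so that $(2C_1)^{1/3}/24 < 1/2$ (e.g.\ $C_1 < 864$) closes the a~priori inequality with a strict margin. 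To break the circularity of ``assuming $|\alpha Y| \le a_3/2$'', I connect $x$ to $x_0$ by a straight segment in $I$ (shrinking $I$ if necessary); along this path $Y$ varies continuously from $0$, and by the intermediate value theorem applied to the continuous function $x\mapsto|\alpha(x) Y(x)|$ it cannot leave the region $|\alpha Y| < a_3/2$ without passing through the critical value $a_3/2$, which the strict margin above forbids. This places $Y$ unambiguously on the branch $y_\alpha(\sigma)$.

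Finally, the regularity of $R$ as a $C^{0,1/3}$ function of its argument is inherited from the analyticity of the convergent Puiseux series $\bar y$ constructed in Lemma~\ref{quartic inversion} together with the continuity of $x\mapsto a_4(x)$: writing $R$ as the difference between the full inverse and the explicit three-term approximation, each term is a composition of $\sigma \mapsto \sigma^{1/3}$ with smooth functions, and the remainder $a_3^{-13/3} a_4(x)^3 \sigma^{4/3}$ is manifestly $C^{0,1/3}$ in $\sigma$. The universal constants $C_1, C_2$ depend only on the absolute quantities $R$ and $J$ produced by Lemma~\ref{quartic inversion}, and not on $a_3, a_4, L,$ or $\theta$, as required.
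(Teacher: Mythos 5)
Your reduction to Lemma~\ref{quartic inversion} with the coefficient frozen at $\alpha=a_4(x)$, via the uniform bound $|a_4|\le L/4!$, is the same basic mechanism the paper uses, and the expansion step itself is fine. The genuine gap is in your branch--identification argument. The continuation along the segment from $x_0$ to $x$ needs, at every intermediate point $x'$, the same smallness $|\theta(x')-\theta_0|\le C_1 a_3^4/L^3$: that is exactly what feeds the ``strict margin'' improvement $|Y'|^3\le 2|\sigma(x')|/a_3$. But the hypothesis is assumed only at the endpoint $x$; $\theta$ is not monotone in general, so $|\theta-\theta_0|$ may be large on the interior of the segment, and the intermediate--value step does not close. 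The parenthetical ``shrinking $I$ if necessary'' is also not available, since the statement quantifies over all $x\in I$ for the given $I$. In fact the identification you are trying to establish is false at this level of generality: take $\theta-\theta_0=a_3(x-x_0)^3-\tfrac{L}{24}(x-x_0)^4$, so $a_4\equiv-\tfrac{L}{24}$ and $\|\p_x^4\theta\|_{L^\infty}=L$; at $x-x_0=24a_3/L$ one has $\theta(x)=\theta_0$, so the smallness hypothesis holds trivially, yet the asserted expansion (all of whose terms, including the remainder bound, vanish when $\theta(x)=\theta_0$) would force $x-x_0=0$. So no continuity argument alone can place every admissible $x$ on the small root; one needs the extra localization $|x-x_0|\lesssim a_3/L$, which in the paper's application is supplied by the restriction $|x|\le\varepsilon^2$ together with the bootstrap in \S~\ref{sec:expansion}.

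This is also where your route diverges from the paper's. The paper does not argue pointwise and then connect: it applies a monotone implicit--function theorem (Corollary 1.1 of~\cite{Kumagai}) to the variable--coefficient function $f(x,y)=-x+a_3y^3+a_4(y+x_0)y^4$ on $\{|x|<\tfrac{23}{24}a_3^4/L^3\}\times\{|y|<6a_3/L\}$, producing a single continuous solution branch $h$, and separately to the constant--parameter quartic $F(x,y,a)$, producing $H(x,a)$; the identity $h(x)=H(x,a_4(h(x)+x_0))$ then transfers the expansion of Lemma~\ref{quartic inversion} to $h$ with no path argument and with the branch selection built in (the branch is the one with values in $(-6a_3/L,6a_3/L)$). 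Equating $x-x_0$ with $h(\theta(x)-\theta_0)$ is then legitimate precisely when $|x-x_0|<6a_3/L$, which is how the theorem is used downstream. To repair your proposal you would either need to add such a localization to your identification step or reproduce the paper's construction of the continuous branch.
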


\begin{proof}[Proof of Theorem~\ref{thm:Taylor:funstuff}] 
Assume without loss of generality that $a_3>0$. We know that $a_4 = (x-x_0)^{-4} (\theta - \theta_0 - a_3(x-x_0)^3)$ is $C^3$ away from $x_0$ and that
$$ a_4(x) = \tfrac{\int^x_{x_0} \p_x^4 \theta (t) (x-t)^3 \: dt}{3! (x-x_0)^4} $$
for all $x \neq x_0$. It follows from this formula that
$$ |a_4(x)| \leq \tfrac{L}{4!} \hspace{4mm} \text{ and } \hspace{4mm} |\p_x a_4(x)| \leq \tfrac{L}{3} \tfrac{1}{|x-x_0|} $$
for all $x \neq x_0$.

First define the function $f: \R\times (I-x_0)\rightarrow \R$,
$$ f(x,y) : = -x + a_3 y^3 + a_4(y+x_0) y^4. $$
Using our bounds on $|a_4|$ and $|\p_xa_4|$, we see that
\begin{align*}
\p_y f(x,y) & \geq y^2(3a_3- \tfrac{L}{2}|y|), \\
f(x, \tfrac{a_3}{L}) & \geq \tfrac{23}{24}\tfrac{a^4_3}{L^3}  - |x|, \\
f(x, - \tfrac{a_3}{L})& \leq |x| - \tfrac{23}{24}\tfrac{a^4_3}{L^3}.
\end{align*}
Therefore, if we define $A: = \{ |x| < \frac{23}{24} \frac{a^4_3}{L^3} \}$ and $B : \{ |y| < \frac{6a_3}{L}\}$, then for all $x \in A$ the function $f(x,\cdot): B \rightarrow \R$ is strictly increasing and has a zero in the interior of $B$. It follows from Corollary 1.1 in~\cite{Kumagai} that there exists a unique continuous function $h: A \rightarrow B$ such that
$$ \{ (x,y) \in A \times B : f(x,y) = 0 \} = \{ (x,h(x)) : x \in A\}. $$

Now define the function $F: \R^3 \rightarrow \R$,
$$ F(x,y,a) : = -x + a_3 y^3 + ay^4. $$
It is easy to check that if $|a| \leq \frac{L}{4!}$ and $|x| < \frac{23}{24}\frac{a^4_3}{L^3}$ then
\begin{align*}
\p_y F(x,y,a) \geq y^2(3a_3-\tfrac{L}{3!}|y|),\hspace{3mm}  F(x,\tfrac{a_3}{L},a)  > 0,\hspace{2mm} \text{ and } \hspace{2mm} F(x, -\tfrac{a_3}{L}, a)  < 0.
\end{align*}
Therefore, if $\tilde{A} : = \{ (x,a) : |x| < \frac{23}{24}\frac{a^4_3}{L^3}, |a| \leq \frac{L}{4!} \}$ and $\tilde{B} : = (-18 \frac{a_3}{L}, 18\frac{a_3}{L})$ then for all $(x,a) \in \tilde{A}$ the function $F(x,\cdot, a) :\tilde{B} \rightarrow \R$ is strictly increasing and contains a 0 in the interior of $\tilde{B}$. It follows from Corollary 1.1 of~\cite{Kumagai} that there exists a unique $H: \tilde{A} \rightarrow \tilde{B}$ continuous such that
$$ \{ (x,y,a) : |x| < \tfrac{23}{24}\tfrac{a^4_3}{L^3}, |y| < 18\tfrac{a_3}{L}, |a| \leq \tfrac{L}{4!}, F(x,y,a) = 0 \} = \{ (x, H(x,a), a) : |x| < \tfrac{23}{24}\tfrac{a^4_3}{L^3}, |a| \leq \tfrac{L}{4!} \}. $$

Our previous lemma \ref{quartic inversion} tells us that there exists a constants $R, C_2 > 0$ independent of $a_3$ or $L$ such that for all $|a| \leq \frac{L}{4!},$ $|x| < R^3 (4!)^3 \frac{a^4_3}{L^3}$ we have
$$ H(x,a) = a^{-1/3}_3 x^{1/3} - \tfrac{1}{3} a^{-5/3}_3 a x^{2/3} + \tfrac{1}{3} a^{-3}_3 a^2 x + \tilde{R}(x, a), $$
where $|\tilde{R}(x,a)| \leq C_2 a^{-13/3}_3 a^3 x^{4/3}$. Now suppose that $|x| < \frac{23}{24}\frac{a^4_3}{L^3}$. Then $|h(x)| < 6\frac{a_3}{L}< 18\frac{a_3}{L}$ and
$$ F(x,h(x), a_4(h(x)+x_0)) = f(x,h(x)) = 0, $$
so $h(x) = H(x, a_4(h(x)))$. It follows that if $C_1 : = \min\big( \frac{23}{24}, (R 4!)^3\big)$ then we have
\begin{align*}
h(x) & = a^{-1/3}_3 x^{1/3} - \tfrac{1}{3} a^{-5/3}_3 a_4(h(x)+x_0) x^{2/3} + \tfrac{1}{3} a^{-3}_3 a_4(h(x)+x_0) x + \tilde{R}(x, a_4(h(x)+x_0)) \\
& =: a^{-1/3}_3 x^{1/3} - \tfrac{1}{3} a^{-5/3}_3 a_4(h(x)+x_0) x^{2/3} + \tfrac{1}{3} a^{-3}_3 a_4(h(x)+x_0) x + R(x)
\end{align*}
for all $|x| < C_1\frac{a^4_3}{L^3}$. Our result now follows.
\end{proof}

\subsection*{Acknowledgments} 
S.S.~was supported by  NSF grant DMS-2007606 and the Department of Energy Advanced Simulation and Computing (ASC) Program.  I.N.~and V.V.~were supported by the NSF CAREER grant DMS-1911413.

%
%

\begin{bibdiv}
\begin{biblist}

\bib{abbrescia2022emergence}{article}{
      author={Abbrescia, Leo},
      author={Speck, Jared},
       title={The emergence of the singular boundary from the crease in $3d$
  compressible Euler flow},
        date={2022},
     journal={arXiv preprint arXiv:2207.07107},
}

\bib{brieskornknorrer}{article}{
      author={Brieskorn, Egbert},
      author={Kn{\"o}rrer, Horst},
       title={Plane algebraic curves},
        date={1986},
}

\bib{BuDrShVi2022}{article}{
      author={Buckmaster, Tristan},
      author={Drivas, Theodore~D},
      author={Shkoller, Steve},
      author={Vicol, Vlad},
       title={Simultaneous development of shocks and cusps for 2D Euler with
  azimuthal symmetry from smooth data},
        date={2022},
     journal={Annals of PDE},
      volume={8},
      number={2},
       pages={1\ndash 199},
}

\bib{BDSV_EMS}{article}{
      author={Buckmaster, Tristan},
      author={Drivas, Theodore~D},
      author={Shkoller, Steve},
      author={Vicol, Vlad},
       title={Formation and development of singularities for the compressible
  Euler equations},
        date={2022},
     journal={Proceedings of the International Congress of Mathematicians},
}

\bib{BuShVi2019a}{article}{
      author={Buckmaster, Tristan},
      author={Shkoller, Steve},
      author={Vicol, Vlad},
       title={Formation of shocks for 2D isentropic compressible Euler},
        date={2022},
     journal={Communications on Pure and Applied Mathematics},
      volume={75},
      number={9},
       pages={2069\ndash 2120},
}

\bib{BuShVi2019b}{article}{
      author={Buckmaster, Tristan},
      author={Shkoller, Steve},
      author={Vicol, Vlad},
       title={{Formation of point shocks for 3D compressible Euler}},
        date={2022},
     journal={Communications on Pure and Applied Mathematics},
      eprint={https://onlinelibrary.wiley.com/doi/10.1002/cpa.22068},
         url={https://onlinelibrary.wiley.com/doi/10.1002/cpa.22068},
}

\bib{BuShVi2020}{article}{
      author={Buckmaster, Tristan},
      author={Shkoller, Steve},
      author={Vicol, Vlad},
       title={{Shock formation and vorticity creation for 3D Euler}},
        date={2022},
     journal={Communications on Pure and Applied Mathematics},
      eprint={https://onlinelibrary.wiley.com/doi/10.1002/cpa.22067},
         url={https://onlinelibrary.wiley.com/doi/10.1002/cpa.22067},
}

\bib{chen2001formation}{article}{
      author={Chen, Shuxing},
      author={Dong, Liming},
       title={Formation and construction of shock for $p$-system},
        date={2001},
     journal={Science in China Series A: Mathematics},
      volume={44},
      number={9},
       pages={1139\ndash 1147},
}

\bib{christodoulou2007formation}{book}{
      author={Christodoulou, Demetrios},
       title={The formation of shocks in 3-dimensional fluids},
   publisher={European Mathematical Society},
        date={2007},
      volume={2},
}

\bib{christodoulou2019shock}{book}{
      author={Christodoulou, Demetrios},
       title={The shock development problem},
        date={2019},
}

\bib{christodoulou2016shock}{article}{
      author={Christodoulou, Demetrios},
      author={Lisibach, Andr{\'e}},
       title={Shock development in spherical symmetry},
        date={2016},
     journal={Annals of PDE},
      volume={2},
      number={1},
       pages={3},
}

\bib{dafermos2005hyperbolic}{book}{
      author={Dafermos, Constantine~M},
       title={Hyperbolic conservation laws in continuum physics},
   publisher={Springer},
        date={2005},
      volume={3},
}

\bib{Kong2002}{article}{
      author={Kong, D.-X.},
       title={Formation and propagation of singularities for {$2\times2$}
  quasilinear hyperbolic systems},
        date={2002},
     journal={Transactions of the American Mathematical Society},
      volume={354},
      number={8},
       pages={3155\ndash 3179},
  url={https://www.ams.org/journals/tran/2002-354-08/S0002-9947-02-02982-3/},
}

\bib{Kumagai}{article}{
      author={Kumagai, S.},
       title={{An Implicit Function Theorem: Comment}},
        date={1980-06},
     journal={Journal of Optimization Theory and Applications},
      volume={31},
       pages={285\ndash 288},
}

\bib{landau1987fluid}{book}{
      author={Landau, LD},
      author={Lifshitz, EM},
       title={Fluid mechanics: Volume 6},
   publisher={Elsevier},
        date={1987},
      volume={6},
}

\bib{lax1964development}{article}{
      author={Lax, Peter~D},
       title={Development of singularities of solutions of nonlinear hyperbolic
  partial differential equations},
        date={1964},
     journal={Journal of Mathematical Physics},
      volume={5},
      number={5},
       pages={611\ndash 613},
}

\bib{lebaud1994description}{article}{
      author={Lebaud, MP},
       title={Description de la formation d'un choc dans le $p$-syst{\`e}me},
        date={1994},
     journal={Journal de math{\'e}matiques pures et appliqu{\'e}es},
      volume={73},
      number={6},
       pages={523\ndash 566},
}

\bib{liu1979development}{article}{
      author={Liu, Tai-Ping},
       title={Development of singularities in the nonlinear waves for
  quasi-linear hyperbolic partial differential equations},
        date={1979},
     journal={Journal of Differential Equations},
      volume={33},
      number={1},
       pages={92\ndash 111},
         url={https://mathscinet.ams.org/mathscinet-getitem?mr=540819},
}

\bib{luk2018shock}{article}{
      author={Luk, Jonathan},
      author={Speck, Jared},
       title={Shock formation in solutions to the 2d compressible Euler
  equations in the presence of non-zero vorticity},
        date={2018},
     journal={Inventiones mathematicae},
      volume={214},
      number={1},
       pages={1\ndash 169},
         url={https://doi.org/10.1007/s00222-018-0799-8},
}

\bib{luk2021stability}{article}{
      author={Luk, Jonathan},
      author={Speck, Jared},
       title={The stability of simple plane-symmetric shock formation for 3d
  compressible Euler flow with vorticity and entropy},
        date={2021},
     journal={arXiv preprint arXiv:2107.03426},
}

\bib{rauch1986bv}{article}{
      author={Rauch, Jeffrey},
       title={BV estimates fail for most quasilinear hyperbolic systems in
  dimensions greater than one},
        date={1986},
     journal={Communications in mathematical physics},
      volume={106},
      number={3},
       pages={481\ndash 484},
}

\bib{SteveVladAwesome}{article}{
      author={Shkoller, Steve},
      author={Vicol, Vlad},
       title={Maximal development and beyond: smooth geometry for Euler shock
  formation},
        date={2023},
     journal={Preprint},
}

\bib{sideris1985formation}{article}{
      author={Sideris, Thomas~C},
       title={Formation of singularities in three-dimensional compressible
  fluids},
        date={1985},
     journal={Communications in mathematical physics},
      volume={101},
      number={4},
       pages={475\ndash 485},
         url={http://projecteuclid.org/euclid.cmp/1104114244},
}

\bib{walker}{book}{
      author={Walker, Robert~John},
       title={Algebraic curves},
   publisher={Springer},
        date={1950},
      volume={642},
}

\bib{yin2004formation}{article}{
      author={Yin, Huicheng},
       title={Formation and construction of a shock wave for 3-d compressible
  Euler equations with the spherical initial data},
        date={2004},
     journal={Nagoya Mathematical Journal},
      volume={175},
       pages={125\ndash 164},
         url={https://doi.org/10.1017/S002776300000893X},
}

\end{biblist}
\end{bibdiv}

\end{document}